\numberwithin{equation}{section}
\author{Mathieu Molitor
\\
\small{\it{e-mail:}}\,\,\url{pergame.mathieu@gmail.com}}
\title{One-dimensional exponential families with constant Hessian scalar curvature} 
\date{}
\begin{document}

\theoremstyle{definition}
\newtheorem{lemma}{Lemma}[section]
\newtheorem{definition}[lemma]{Definition}
\newtheorem{proposition}[lemma]{Proposition}
\newtheorem{corollary}[lemma]{Corollary}
\newtheorem{theorem}[lemma]{Theorem}
\newtheorem{remark}[lemma]{Remark}
\newtheorem{example}[lemma]{Example}
\bibliographystyle{alpha}

\maketitle 


\begin{abstract}
	We give a complete classification of 1-dimensional exponential families $\mathcal{E}$ defined 
	over a finite space $\Omega=\{x_{0},...,x_{m}\}$ whose Hessian scalar curvature is constant. 
	We observe an interesting phenomenon: if $\mathcal{E}$ has constant 
	Hessian scalar curvature, say $\lambda,$ then $\lambda=\tfrac{2}{k}$ for 
	some positive integer $k\leq m$. We also discuss the central role played by the binomial distribution 
	in this classification. 
\end{abstract}

\section{Introduction}

	Let $\Omega=\{x_{0},...,x_{m}\}$ be a finite set endowed with the counting measure and let $\mathcal{E}$ be 
	a 1-dimensional exponential family defined over $\Omega$, with elements of the form 
	$p(x;\theta)=\textup{exp}(C(x)+\theta F(x)-\psi(\theta))$, where $C,F:\Omega\to \mathbb{R}$ are functions, 
	$\theta\in \mathbb{R}$ and $\psi:\mathbb{R}\to \mathbb{R}$. We denote by $h_{F}$, $\nabla^{(e)}$ and $\nabla^{(m)}$, 
	the Fisher metric, exponential connection and mixture connection, respectively. 

	As it is well-known, the dualistic structure $(h_{F},\nabla^{(e)},\nabla^{(m)})$ is dually flat \cite{Amari-Nagaoka}. 
	Therefore, the tangent bundle $T\mathcal{E}$ is naturally a K\"{a}hler manifold of real dimension 2 
	\cite{Molitor-exponential,shima}. Let $\textup{Scal}:T\mathcal{E}\to \mathbb{R}$ be the corresponding scalar curvature. 

	In this paper, we classify all 1-dimensional exponential families, as described above, for which 
	$\textup{Scal}$ is constant (see Theorem \ref{nekwndkeknskn}). 
	Our proof is based on the particularly simple expression for the Ricci tensor in complex coordinates 
	(since $T\mathcal{E}$ is K\"{a}hler), which implies that $\textup{Scal}$ factorizes as $\textup{Scal}=S\circ \pi$, where 
	$\pi:T\mathcal{E}\to \mathcal{E}$ is the canonical projection and $S:\mathcal{E}\to \mathbb{R}$ is a globally well-defined 
	function. Thus, solving the equation $\textup{Scal}\equiv \textup{constant}$ amounts to solve the simpler 
	equation $S\equiv \textup{constant}$, which can be done by solving elementary differential equations in one variable. 

	An interesting consequence of the above classification is that if $T\mathcal{E}$ has constant scalar curvature, 
	then $\textup{Scal} = \tfrac{2}{k}$ for some positive integer $k$ satisfying 
	$1\leq k\leq m$ (see Corollary \ref{ekwjdkefkjk}). For instance, if 
	$\mathcal{E}=\mathcal{B}(n)$ is the set of binomial distributions defined over $\{0,1,...,n\}$, then 
	$\textup{Scal}=\tfrac{2}{n}$. 

	The last section of the paper is devoted to analysing the ``internal symmetries" of the problem, leading to 
	a somewhat simpler reformulation of the classification discussed above 
	that emphasizes the importance of the binomial distribution. For this purpose, we introduce an equivalence 
	relation $\sim$ on the set $E_{m}$ of all 1-dimensional exponential families defined over the same set 
	$\Omega=\{x_{0},...,x_{m}\}$ by declaring $\mathcal{E}\in E_{m}$ to be equivalent to $\mathcal{E}'\in E_{m}$ if and only if 
	they coincide as spaces of maps $\Omega\to \mathbb{R}$. In Proposition \ref{nekwndknk}, we show that 
	the set of equivalence classes is in one-to-one correspondence with the affine Grassmannian 
	$\textup{Graff}_{1}(\mathbb{R}^{m})$ of 1-dimensional affine subspaces of $\mathbb{R}^{m}$, that is, 
	\begin{eqnarray*}
		 E_{m}/\sim \,\,\cong\,\, \textup{Graff}_{1}(\mathbb{R}^{m}). 
	\end{eqnarray*}
	Then, given $\mathcal{E}\in E_{m}$, we introduce the \textit{reduced exponential family} of $\mathcal{E}$, denoted by 
	$\mathcal{E}_{red}$ (see Definition \ref{nckwdknknk}). 
	It is an exponential family defined over a finite set $\Omega_{red}$, with 
	elements of the form $\textup{exp}\{C_{red}(x)+\theta F_{red}(x)-\psi_{red}(\theta)\}$. Its dualistic structure 
	is isomorphic to that of $\mathcal{E}$, but in general $\Omega_{red}\neq \Omega$, 
	and $F_{red}$ is always strictly increasing. Then we reformulate the classification given 
	in Theorem \ref{nekwndkeknskn} as follows (see Proposition \ref{nkdnkenkknk}). If $\mathcal{E}$ is 
	a 1-dimensional exponential family defined over $\Omega=\{x_{0},...,x_{m}\}$, 
	then $\textup{Scal}:T\mathcal{E}\to \mathbb{R}$ is constant 
	if and only if $\mathcal{E}_{red}\sim \mathcal{B}(p)$, where $p+1$ is the cardinality of $\Omega_{red}$. 

	For the convenience of the reader, the paper gives a rather detailed discussion on the relation between 
	K\"{a}hler geometry and statistics. The topics covered include: definition and 
	examples of K\"{a}hler manifolds (Section \ref{nekkdwnrknfkn}), connections and connectors 
	(Section \ref{pouette}), Dombrowki's construction (Section \ref{nknknfeknaekn}), 
	Ricci curvature (Section \ref{nkndkefknkn}) and statistical manifolds (Section \ref{nfknwkdenknwkn}). 

	In Section \ref{fnekwndkefnknk}, we classify all 1-dimensional exponential families $\mathcal{E}$ with constant 
	scalar curvature on $T\mathcal{E}$ (Theorem \ref{nekwndkeknskn}). 

	In Section \ref{kwdenfknknk}, we reformulate the classification result obtained in the preceding section 
	by using equivalence classes and reduced exponential families (Proposition \ref{nkdnkenkknk}).\\

	\textbf{Notations.} If $M$ is a manifold, then $\mathfrak{X}(M)$ will denote the space of vector fields on 
	$M$ and $C^{\infty}(M)$ the space of smooth real-valued functions on $M$. $TM$ will denote the tangent bundle 
	of $M$ and $TTM$ the tangent bundle of the tangent bundle of $M$ 
	(hence if $\textup{dim}(M)=n$, then $\textup{dim}(TTM)=4n$). The derivative of a smooth map $f:M\to N$ between 
	manifolds at a point $p\in M$ will be denoted by $f_{*_{p}}$. 

\section{K\"{a}hler manifolds}\label{nekkdwnrknfkn}
	General references are \cite{ballmann2006,Huybrechts,moroianu_2007}.
\begin{definition}
	A \textit{complex manifold} of complex dimension $n$ is a Hausdorff topological space $M$ together with 
	a family of maps $\phi_{\alpha}:U_{\alpha}\to\mathbb{C}^{n}$, 
	$\alpha\in\:A$, where each $U_{\alpha}\subset M$ is an open set, such that:
	\begin{itemize}
	\item $\cup_{\alpha\in\:A}$ $U_{\alpha}=M$,
	\item $\phi_{\alpha}(U_{\alpha})$ is an open subset of $\mathbb{C}^{n}$ for all $\alpha\in A$,
	\item $\phi_{\alpha}$ is a homeomorphism onto its image for all $\alpha\in A$,
	\item $\phi_{\beta}\circ\phi_{\alpha}^{-1}: \phi_{\alpha}(U_{\alpha}\cap U_{\beta})\to\phi_{\beta}(U_{\alpha}\cap U_{\beta})$ is 
	holomorphic for all $\alpha ,\beta\in A$ (provided $U_{\alpha}\cap U_{\beta}\neq\emptyset$). 
	\end{itemize}
	The family $\mathcal{A}:=\big\{(U_{\alpha},\phi_{\alpha})\,\vert\,\alpha\in A\big\}$ is called a \textit{complex atlas}. 
\end{definition}
	Just as for smooth manifolds, one defines \textit{complex charts} and \textit{complex coordinates} on a complex manifold $M$.

	In what follows, we will often identify $\mathbb{C}^{n}$ with $\mathbb{R}^{2n}$ via the map 
	\begin{eqnarray*}
	\mathbb{R}^{2n}\rightarrow \mathbb{C}^{n},\,\,\,(x_{1},...,x_{n},y_{1},...,y_{n})\mapsto (x_{1}+iy_{1},...,x_{n}+iy_{n}). 
	\end{eqnarray*}
	Upon this identification, every complex atlas of $M$ determines a smooth atlas of real dimension $2n$. Therefore, 
	complex manifolds of complex dimension $n$ are naturally smooth manifolds of dimension $2n$.

	Let $M$ be a complex manifold of complex dimension $n$ with complex atlas 
	$\mathcal{A}=\big\{(U_{\alpha},\phi_{\alpha})\,\big\vert\,\alpha\in A\big\}$. For each $\alpha\in A$ and each $p\in U_{\alpha}$, 
	define $(J_{\alpha})_{p}\,:\,T_{p}M\rightarrow T_{p}M$ by 
	\begin{eqnarray*}
		(J_{\alpha})_{p}:=(\phi_{\alpha}^{-1})_{*_{\phi(p)}}\circ J_{\mathbb{R}^{2n}}\circ 
		(\phi_{\alpha})_{*_{p}},
	\end{eqnarray*}
	where $J_{\mathbb{R}^{2n}}\,:\,\mathbb{R}^{2n}\to \mathbb{R}^{2n}$ is the linear map 
	whose matrix representation in the canonical basis is 
	\begin{eqnarray*}
		\begin{bmatrix}
			 0      &  -I_{n}\\
			I_{n}   &   0
		\end{bmatrix}. 
	\end{eqnarray*}
	It is easy to check that if $p\in U_{\alpha}\cap U_{\beta}$, then $(J_{\alpha})_{p}=(J_{\beta})_{p}$. Thus we will 
	use the notation $J_{p}$ instead of $(J_{\alpha})_{p}$. 
	Letting $p\in M$ vary, we obtain a smooth tensor $J\,:\,TM\rightarrow TM$ on the smooth manifold $M$ that satisfies $J\circ J=-Id$. 

	The tensor $J$ is called the \textit{complex structure} of the complex manifold $M$. 
\begin{definition}
	Let $M$ be a smooth manifold. A smooth tensor $J\,:\,TM\rightarrow TM$ satisfying $J\circ J=-Id$ is called 
	an \textit{almost complex structure}. 
\end{definition}
\begin{example}
	The complex structure of a complex manifold is an almost complex structure. 
\end{example}

	An almost complex structure $J$ on a smooth manifold $M$ is said to be \textit{integrable} if there 
	exists a complex atlas on $M$ whose corresponding complex structure coincides with $J$. 
	The Newlander-Nirenberg Theorem asserts that an almost complex structure $J$ is integrable if and only if 
	the \textit{Nijenhuis tensor}, 
	defined by
	\begin{eqnarray*}
		N^J(X,Y)=[X,Y]+J[JX,Y]+J[X,JY]-[JX,JY],
	\end{eqnarray*}
	vanishes identically for all vector fields $X,Y$ on $M$ (see \cite{Newlander}). 

	Therefore, a complex manifold can be viewed as a pair $(M,J)$, where $M$ is a smooth manifold and $J\,:\,TM\rightarrow TM$ is 
	an integrable almost complex structure. 
\begin{definition}
	An \textit{almost Hermitian manifold} is a triple $(M,g,J)$, where $M$ is a smooth manifold, $g$ is a 
	Riemannian metric and $J$ is an almost complex structure such that $g_{p}(Ju,Jv)=g_{p}(u,v)$ 
	for all $p\in M$ and all $u,v\in T_{p}M$.
\end{definition}
	If $(M,g,J)$ is an almost Hermitian manifold, we define a 2-form $\omega$ on $M$, called the \textit{fundamental form}, by 
	\begin{eqnarray*}
		\omega_{p} (u,v):= g_{p}(Ju,v),\quad (p\in M,\,\,\,u,v\in T_{p}M).
	\end{eqnarray*}
\begin{definition} 
	A \textit{K\"{a}hler manifold} is an almost Hermitian manifold $(M,g,J)$ satisfying the following 
	analytical conditions:
	\begin{enumerate}[(i)]
	\item the fundamental form $\omega$ is closed, that is, $d\omega =0$,
	\item $J$ is integrable.
	\end{enumerate}
\end{definition}
\begin{example}\label{exa:3.2} 
	The manifold $M=\mathbb{\mathbb{C}}^{n}\simeq\mathbb{R}^{2n}$ endowed with the Euclidean metric and the 
	almost complex structure 
	$J_{\mathbb{R}^{2n}}$ is a K\"{a}hler manifold, whose fundamental form is
	\begin{align*}
		\omega=\sum_{k=1}^{n}dx_{k}\land dy_{k},
	\end{align*}
	where $x_{1},...x_{n},y_{1},...,y_{n}$ are linear coordinates of $\mathbb{R}^{2n}$. 
 \end{example}
\begin{example}\label{nkdnfkekwnk}
	The unit sphere $S^{2}\subseteq \mathbb{R}^{3}$ endowed with the round metric $g$ induced by $\mathbb{R}^{3}$, 
	and complex structure $J_{p}(u):=-p\times u$ (cross product) is a K\"{a}hler manifold with fundamental 
	form $\omega$ given by 
	\begin{eqnarray*}
		 \omega_{p}(u,v):=-\textup{det}(p,u,v)=-g_{p}(p,u\times v),
	\end{eqnarray*}
	where $p\in S^{2}$ and $u,v\in T_{p}S^{2}=\{\textup{plane orthogonal to}\,p\}$. 
\end{example}
\begin{example}\label{exa:3.4} 
	The complex projective space $\mathbb{P}(\mathbb{C}^{n})$ is the set of all complex lines in 
	$\mathbb{C}^{n}$ passing through the origin. Let $\pi\,:\,\mathbb{C}^{n}-\{0\}\rightarrow \mathbb{P}(\mathbb{C}^{n}),\,\,\,
	z=(z_{1},...,z_{n})\mapsto 
	[z]=[z_{1},...,z_{n}]=\mathbb{C}z$. We define a topology on $\mathbb{P}(\mathbb{C}^{n})$ 
	by declaring $U\subseteq \mathbb{P}(\mathbb{C}^{n})$ 
	to be open if and only if $\pi^{-1}(U)$ is open in $\mathbb{C}^{n}-\{0\}$. 
	It can be shown that the family of maps $\phi_{i}\,:\,\big\{[z_{1},...,z_{n}]\,
	\big\vert\,z_{i}\neq 0\big\}\rightarrow \mathbb{C}^{n-1},\,\,\,
	[z_{1},...,z_{n}]\mapsto \big(\tfrac{z_{1}}{z_{i}},...,
	\tfrac{z_{i-1}}{z_{i}},\tfrac{z_{i+1}}{z_{i}},...,\tfrac{z_{n}}{z_{i}}\big)$, 
	$i=1,...,n$, defines a complex atlas on $\mathbb{P}(\mathbb{C}^{n})$. The restriction of $\pi$ to the unit sphere 
	$S^{2n-1}\subseteq \mathbb{R}^{2n}\cong \mathbb{C}^{n}$ yields a surjective submersion 
	$\pi\vert_{S^{2n-1}}\,:\,S^{2n-1}\rightarrow \mathbb{P}(\mathbb{C}^{n})$ and hence there are tensors $g$ and $\omega$ 
	on $\mathbb{P}(\mathbb{C}^{n})$ characterized by the formulas 
	\begin{eqnarray*}
		(\pi\vert_{S^{2n-1}})^{*}g=j^{*}\textup{Re} \langle \cdot,\cdot \rangle \quad \textup{and}
		\quad(\pi\vert_{S^{2n-1}})^{*}\omega=j^{*}\textup{Im} \langle \cdot,\cdot \rangle,
	\end{eqnarray*}
	where $j: S^{2n-1}\hookrightarrow\mathbb{C}^{n}\cong \mathbb{R}^{2n}$ is the inclusion, 
	$\textup{Re}\langle \cdot,\cdot \rangle$ and $\textup{Im}\langle \cdot,\cdot\rangle$ are the real and imaginary parts of 
	the standard Hermitian product $\langle z,w\rangle=\overline{z}_{1}w_{1}+...+\overline{z}_{n}w_{n}$ on $\mathbb{C}^{n}$.

	It can be shown that $(\mathbb{P}(\mathbb{C}^{n}),g,J)$ is a K\"{a}hler manifold, where $J$ 
	is the associated complex structure, with fundamental form $\omega$. 
\end{example}

\section{Connections and connectors}\label{pouette}

	This section follows closely \cite{Dombrowski}. Let $M$ be a manifold. 
\begin{definition}\label{def:2.1}
	A \textit{linear connection} $\nabla$ on $M$ is a map 
	$\mathfrak{X}(M)\times\mathfrak{X}(M)\to\mathfrak{X}(M)$, $(X,Y)\mapsto\nabla_{X}Y$, 
	satisfying the following properties:
	\begin{enumerate}[(i)]
		\item $\nabla_{fX+gY}Z=f\nabla_{X}Z+g\nabla_{Y}Z$,
		\item $\nabla_{X}(Y+Z)=\nabla_{X}Y+\nabla_{X}Z$,
		\item $\nabla_{X}(fY)=X(f)Y+f\nabla_{X}Y$,
	\end{enumerate}
	for all vector fields $X,Y,Z\in\mathfrak{X}(M)$ and for all functions 
	$f,g\in C^{\infty}(M)$. 
\end{definition}
	
%
%
%
	
	In local coordinates $(x_{1},...,x_{n})$ on $U\subseteq M$, if $X=\sum_{k=1}^{n}X^{i}\tfrac{\partial}{\partial x_{k}}$ 
	and $Y=\sum_{k=1}^{n}Y^{k}\tfrac{\partial}{\partial x_{k}}$, then, by standard computations, 
	\begin{eqnarray}\label{nekwmdkvnksnk}
		\nabla_{X}Y=\sum_{k=1}^{n}\bigg(X(Y^{k})+\sum_{i,j=1}^{n}X^{i}Y^{j}\Gamma_{ij}^{k}\bigg)\dfrac{\partial}{\partial x_{k}},
	\end{eqnarray}
	where $\Gamma_{ij}^{k}:U\to \mathbb{R}$ are the Christoffel symbols, defined by the formula 
	\begin{eqnarray*}
		\nabla_{\frac{\partial}{\partial x_{i}}}\frac{\partial}{\partial x_{j}}
		=\sum_{k=1}^n\Gamma_{ij}^{k}\frac{\partial}{\partial x_{k}}\,\,\,\,\,\,\qquad \textup{for}\,\,i,j=1,...,n.
	\end{eqnarray*}

	Let $\pi:TM\to M$ be the canonical projection and let 
	$(U,\varphi)$ be a chart for $M$ with local coordinates $(x_{1},...,x_{n})$. Define 
	$\widetilde{\varphi}:\pi^{-1}(U)\to \mathbb{R}^{2n}$ by
	\begin{eqnarray*}
		\widetilde{\varphi}\bigg(\sum_{i=1}^{n}u_{i}\dfrac{\partial}{\partial x_{i}}\bigg\vert_{p}\bigg)
		=(x_{1}(p),...,x_{n}(p),u_{1},...,u_{n}). 
	\end{eqnarray*}
	Then $(\pi^{-1}(U),\widetilde{\varphi})$ is a chart for $TM$; let 
	$(q_{1},...,q_{n},r_{1},...,r_{n})$ be the corresponding local coordinates (in particular, 
	$q_{i}=x_{i}\circ \pi$ for every $i=1,...,n)$.

	Let $u=\sum_{k=1}^{n}u_{k}\tfrac{\partial}{\partial x_{k}}\big\vert_{p}\in \pi^{-1}(U)$ be arbitrary. 
	Define a linear map $K_{u}:T_{u}(TM)\to T_{p}M$ by 
	\begin{eqnarray*}
		&&K_{u}\bigg(\dfrac{\partial}{\partial q_{a}}\bigg\vert_{u}\bigg):=
			\sum_{k,j=1}^{n}\Gamma_{aj}^{k}(p)u_{j}\dfrac{\partial}{\partial x_{k}}\bigg\vert_{p}
			 \,\,\,\,\,\,\,\,\,\,\textup{for}\,\,a=1,...,n,\\[0.9em]
		&&K_{u}\bigg(\dfrac{\partial}{\partial r_{a}}\bigg\vert_{u}\bigg):=
			\dfrac{\partial}{\partial x_{a}}\bigg\vert_{p}\,\,\,\,\,\,\,\,\,\,\textup{for}\,\,a=1,...,n.
	\end{eqnarray*}
\begin{lemma}\label{nfkewefnknknk}
	Let $X$ and $Y$ be vector fields on $M$. Suppose $Y(p)=u$. Then $K_{u}(Y_{*_{p}}X_{p})=(\nabla_{X}Y)(p)$.
\end{lemma}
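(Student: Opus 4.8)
The plan is to prove the identity by a direct computation in the adapted coordinates $(q_{1},\dots,q_{n},r_{1},\dots,r_{n})$ on $TM$, reducing the left-hand side to the local expression \eqref{nekwmdkvnksnk} for $\nabla_{X}Y$. Since everything is local, I may assume $p\in U$ and write $X_{p}=\sum_{i}X^{i}(p)\tfrac{\partial}{\partial x_{i}}\big\vert_{p}$ together with $Y=\sum_{k}Y^{k}\tfrac{\partial}{\partial x_{k}}$ for smooth component functions $Y^{k}:U\to\mathbb{R}$, so that $u=Y(p)$ has fibre coordinates $u_{j}=Y^{j}(p)$.

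The first step is to read off the coordinate description of the section $Y:U\to\pi^{-1}(U)$. By the very definition of $\widetilde{\varphi}$, one has $q_{a}\circ Y=x_{a}$ and $r_{a}\circ Y=Y^{a}$ for every $a$. This is the key observation of the whole argument: the base coordinates of the section $Y$ are literally the $x_{a}$, while the fibre coordinates carry its components $Y^{a}$.

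The second step is to differentiate. Applying the chain rule to these relations yields $Y_{*_{p}}\big(\tfrac{\partial}{\partial x_{i}}\big\vert_{p}\big)=\tfrac{\partial}{\partial q_{i}}\big\vert_{u}+\sum_{a}\tfrac{\partial Y^{a}}{\partial x_{i}}(p)\,\tfrac{\partial}{\partial r_{a}}\big\vert_{u}$, since the derivatives of the base coordinates produce Kronecker deltas while those of the fibre coordinates produce the Jacobian of $(Y^{a})$. Contracting with $X^{i}(p)$ and summing over $i$ then exhibits $Y_{*_{p}}X_{p}$ as an explicit element of $T_{u}(TM)$.

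The final step is to apply $K_{u}$ by linearity, using its two defining formulas together with the substitution $u_{j}=Y^{j}(p)$. The $\tfrac{\partial}{\partial r_{a}}$-terms collapse to $\sum_{k}X(Y^{k})(p)\,\tfrac{\partial}{\partial x_{k}}\big\vert_{p}$, while the $\tfrac{\partial}{\partial q_{i}}$-terms produce the Christoffel contribution $\sum_{k}\big(\sum_{i,j}X^{i}(p)Y^{j}(p)\Gamma_{ij}^{k}(p)\big)\tfrac{\partial}{\partial x_{k}}\big\vert_{p}$. Their sum is exactly the right-hand side of \eqref{nekwmdkvnksnk} evaluated at $p$, that is, $(\nabla_{X}Y)(p)$. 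The computation is entirely routine; the only point demanding care is the coordinate form of $Y_{*_{p}}$ in the second step, where one must not overlook that $q_{a}\circ Y=x_{a}$ forces the horizontal derivatives to reduce to Kronecker deltas rather than contributing to the vertical part.
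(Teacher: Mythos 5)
Your proof is correct and follows exactly the same route as the paper: compute $Y_{*_{p}}X_{p}$ in the adapted coordinates $(q,r)$, apply $K_{u}$ with $u_{j}=Y^{j}(p)$, and compare with the local expression \eqref{nekwmdkvnksnk}. The only difference is that you spell out the derivation of $Y_{*_{p}}X_{p}=\sum_{a}\bigl(X^{a}(p)\tfrac{\partial}{\partial q_{a}}\big\vert_{u}+X_{p}(Y^{a})\tfrac{\partial}{\partial r_{a}}\big\vert_{u}\bigr)$ from $q_{a}\circ Y=x_{a}$ and $r_{a}\circ Y=Y^{a}$, which the paper dispatches as ``standard computations.''
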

\begin{proof}
	By standard computations, 
	\begin{eqnarray*}
		Y_{*_{p}}X_{p}=\sum_{a=1}^{n}\bigg(X^{a}(p)\dfrac{\partial}{\partial q_{a}}\bigg\vert_{u}
		+X_{p}(Y^{a})\dfrac{\partial}{\partial r_{a}}\bigg\vert_{u}\bigg)
	\end{eqnarray*}
	so 
	\begin{eqnarray*}
		K_{u}(Y_{*_{p}}X_{p})=
		\sum_{a=1}^{n}\bigg(X^{a}(p)\sum_{k,j=1}^{n}\Gamma_{aj}^{k}(p)Y^{j}(p)\dfrac{\partial}{\partial x_{k}}\bigg\vert_{p}
			+X_{p}(Y^{a}) \dfrac{\partial}{\partial x_{a}}\bigg\vert_{p}\bigg),
	\end{eqnarray*}
	where we have used $u_{j}=Y^{j}(p)$. Comparing the above formula with the local 
	expression for $\nabla_{X}Y$ in coordinates (see \eqref{nekwmdkvnksnk}), one obtains the desired formula. 
\end{proof}

	Clearly, vectors of the form $Y_{*_{p}}X_{p}$, with $Y_{p}=u$, generate $T_{u}(TM)$, and so the above lemma implies 
	that the definition of $K_{u}$ is independant of the choice of the chart $(U,\varphi)$. 
	
	The map 
	\begin{eqnarray*}
		K:TTM\to TM,
	\end{eqnarray*}
	defined for $A\in T_{u}(TM)$ by $K(A):=K_{u}(A)$, is called \textit{connector}, or \textit{connection map}, 
	associated to $\nabla$. 

	The following result is an immediate consequence of the definition of $K$.
\begin{proposition}\label{prop:2.3}
	Let $K$ be the connector associated to a connection $\nabla$ on $M$. The following holds.
	\begin{enumerate}[(i)]
	\item For every pair $X,Y$ of vector fields on $M$, $\nabla_{X}Y=KY_{*}X$,
		where $Y_{*}X$ denotes the derivative of $Y$ in the direction of $X$.
	
	\item For every $u\in T_{p}M$, the restriction of $K$ to $T_{u}(TM)$ is a linear map $T_{u}(TM)\to T_{p}M$.
	\end{enumerate}
\end{proposition}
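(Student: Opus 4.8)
The plan is to deduce both assertions directly from the definition of the connector $K$, using Lemma \ref{nfkewefnknknk} for the first part and the construction of the maps $K_{u}$ for the second. Nothing beyond unwinding definitions should be required, so I would keep the argument entirely at the pointwise level.

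For part (ii), I would simply note that the defining formulas prescribe the value of $K_{u}$ on each element of the coordinate basis $\{\partial/\partial q_{a}|_{u},\ \partial/\partial r_{a}|_{u}\}_{a=1}^{n}$ of $T_{u}(TM)$ and then extend by linearity; thus $K_{u}$ is a linear map $T_{u}(TM)\to T_{p}M$ by construction. Since $K$ is defined by $K(A):=K_{u}(A)$ for $A\in T_{u}(TM)$, its restriction to $T_{u}(TM)$ is exactly $K_{u}$, and hence linear. This disposes of (ii) with no computation.

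For part (i), I would argue fibrewise. Fix $p\in M$ and set $u:=Y(p)$. The vector $Y_{*_{p}}X_{p}$ lies in $T_{u}(TM)$, so by the definition of $K$ we have $K(Y_{*_{p}}X_{p})=K_{u}(Y_{*_{p}}X_{p})$. Now Lemma \ref{nfkewefnknknk}, applied with precisely this $u=Y(p)$, gives $K_{u}(Y_{*_{p}}X_{p})=(\nabla_{X}Y)(p)$. Since $p$ was arbitrary, the vector fields $KY_{*}X$ and $\nabla_{X}Y$ agree at every point of $M$, which is the claimed identity $\nabla_{X}Y=KY_{*}X$.

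The only point that needs care — and the closest thing to an obstacle — is that each $K_{u}$ was originally defined through a chart $(U,\varphi)$, so a priori its values could depend on that choice, and with them the meaning of $K$. This was already settled in the remark immediately preceding the proposition: the vectors $Y_{*_{p}}X_{p}$ with $Y_{p}=u$ span $T_{u}(TM)$, and Lemma \ref{nfkewefnknknk} pins down the value of $K_{u}$ on each such vector chart-independently, so $K$ is globally well defined. Granting this, both statements are genuinely immediate, which matches the description of the result as a direct consequence of the definition of $K$.
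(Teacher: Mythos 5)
Your proposal is correct and follows exactly the route the paper intends: the paper states the proposition as an immediate consequence of the definition of $K$ together with Lemma \ref{nfkewendefnknknk} (well-definedness having been settled in the preceding remark), and your write-up simply spells out that unwinding. Nothing differs in substance; you have just made the ``immediate'' explicit.
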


	If $A\in T_{u}(TM)$ is such that $\pi_{*_{u}}A=0$ and $K(A)=0$, then a simple calculation using local coordinates 
	shows that $A=0$. Therefore, 
\begin{proposition}\label{cor:2.7}
	Let $K$ be the connector associated to a connection $\nabla$ on $M$. 
	Given $u\in T_{p}M$, the map $T_{u}(TM)\to T_{p}M\oplus T_{p}M$,
	defined by
	\begin{eqnarray}
		A\mapsto(\pi_{*_{u}}A,KA),\label{eq:2.5}
	\end{eqnarray}
	is a linear bijection.
\end{proposition}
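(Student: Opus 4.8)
The plan is to combine linearity with an equality of dimensions, thereby reducing the claim to injectivity, which in turn follows from the kernel computation announced just above the statement.

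First I would observe that the map in \eqref{eq:2.5} is linear: the differential $\pi_{*_{u}}:T_{u}(TM)\to T_{p}M$ is linear, being the derivative of a smooth map at a point, while the restriction $K\vert_{T_{u}(TM)}:T_{u}(TM)\to T_{p}M$ is linear by Proposition \ref{prop:2.3}(ii). Hence $A\mapsto(\pi_{*_{u}}A,KA)$ is a linear map between the vector spaces $T_{u}(TM)$ and $T_{p}M\oplus T_{p}M$. Next I would count dimensions: if $\dim(M)=n$, then $\dim\big(T_{u}(TM)\big)=2n$ since $TM$ is a $2n$-dimensional manifold, whereas $\dim\big(T_{p}M\oplus T_{p}M\big)=n+n=2n$. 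Thus source and target have the same finite dimension, so for this linear map injectivity is equivalent to surjectivity and hence to bijectivity; it therefore suffices to prove that the kernel is trivial.

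Finally, for injectivity I would show $\ker=\{0\}$. An element $A$ of the kernel satisfies $\pi_{*_{u}}A=0$ and $KA=0$ simultaneously, so by the remark immediately preceding the statement a direct computation in the local coordinates $(q_{1},\dots,q_{n},r_{1},\dots,r_{n})$ forces $A=0$. Concretely, writing $A=\sum_{a=1}^{n}\big(\alpha_{a}\tfrac{\partial}{\partial q_{a}}\big\vert_{u}+\beta_{a}\tfrac{\partial}{\partial r_{a}}\big\vert_{u}\big)$ and using $q_{i}=x_{i}\circ\pi$ together with the definition of $K$, the condition $\pi_{*_{u}}A=0$ yields $\alpha_{a}=0$ for all $a$; once this is known, $KA=0$ collapses to $\sum_{a=1}^{n}\beta_{a}\tfrac{\partial}{\partial x_{a}}\big\vert_{p}=0$, which forces $\beta_{a}=0$ for all $a$. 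Hence $A=0$.

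As for the main obstacle: there is essentially none beyond routine bookkeeping. The only genuine computation is the kernel verification, and because $\pi_{*_{u}}$ annihilates exactly the vertical directions $\partial/\partial r_{a}$ while $K$ acts as the identity on those same directions, the two conditions have a triangular structure that makes the triviality of the kernel immediate. The dimension count then upgrades injectivity to bijectivity and completes the argument.
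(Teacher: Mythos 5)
Your proof is correct and follows the same route as the paper: the paper records the kernel computation (that $\pi_{*_{u}}A=0$ and $KA=0$ force $A=0$ in the coordinates $(q,r)$) in the remark immediately preceding the proposition and then deduces the bijection, implicitly using the same dimension count you make explicit. Your write-up simply fills in the routine details.
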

	Thus, given a linear connection $\nabla$, we can identify at any point $u\in T_{p}M$ the vector spaces  $T_{u}(TM)$
	and $T_{p}M\oplus T_{p}M$ via the map (\ref{eq:2.5}). 

\section{Dombrowski's construction}\label{nknknfeknaekn}

	Let $M$ be a smooth manifold endowed with a connection $\nabla$. We will denote by 
	$\pi:TM\rightarrow M$ the canonical projection. 

	By Proposition \ref{cor:2.7}, there is an identification of vector spaces 
	$T_{u}(TM)\cong T_{p}M\oplus T_{p}M$, where $p=\pi(u)$. If there is no danger of confusion, 
	we will therefore regard an element of $T_{u}(TM)$ as a pair $(v,w)$, where $v,w\in T_{p}M$.

	Let $h$ be a Riemannian metric on $M$. The pair $(h,\nabla)$ determines an almost Hermitian 
	structure on $TM$ via the following formulas:

	\begin{alignat*}{5}\label{equation definition G, omega, etc.}
		g_{u}\big(\big(v,w\big),
			\big(\overline{v},
			\overline{w}\big)\big)\quad:=&\quad
			h_{p}\big(v,\overline{v}\big)+
			h_{p}\big(w,\overline{w}\big),&\textup{}\,\,\,\,\,\,\,\,\,
			(\textup{metric})&\nonumber\\
		\omega_{u}\big(\big(v,w\big),
			\big(\overline{v},
			\overline{w}\big)\big)\quad:=&\quad h_{p}\big(v,\overline{w}\big)-
			h_{p}\big(w,\overline{v}\big),&\textup{}\,\,\,\,\,\,\,\,\,
			(\textup{2-form})&\nonumber\\
		J_{u}\big(\big(v,w\big)\big)\quad:=&\quad
		h_p\big(-w,v\big),&
		(\textup{almost complex structure})
	\end{alignat*}
	where $u,v,w,\overline{v},\overline{w}\;\in\; T_{p}M$.

	The tensors $g,J,\omega$ are smooth (this will follow from their coordinate representation, 
	see Proposition \ref{prop:4.1} below) and clearly, 
	$J^{2}=-Id$, $g(Ju,Jv)=g(u,v)$ and $\omega(u,v)=g(Ju,v)$ for all $u,v\in TM$ such that $\pi(u)=\pi(v)$. 
	Thus, $(TM,g,J)$ is an almost Hermitian manifold with fundamental form $\omega$. 
	This is \textit{Dombrowski's construction} \cite{Dombrowski}. 

	We now review the analytical properties of Dombrowski's construction. We begin with some definitions.
\begin{definition}\label{def:3.6} 
	A \textit{dualistic structure} on a manifold $M$ is a triple $(h,\nabla,\nabla^*)$, where $h$ is a 
	Riemannian metric and where $\nabla$ and $\nabla^{*}$ are linear connections satisfying 
	\begin{eqnarray*}
		Xg(Y,Z)=g(\nabla_{X}Y,Z)+g(Y,\nabla_{X}^{*}Z)
	\end{eqnarray*}
	for all vector fields $X,Y,Z$ on $M$. The connection $\nabla^{*}$ is called 
	the \textit{dual connection} of $\nabla$ (and vice versa).
\end{definition}
	As the literature is not uniform, let us agree that the torsion $T$ and the curvature tensor $R$ of a
	connection $\nabla$ are defined as
	\begin{alignat*}{1}
		 T(X,Y)&:=\nabla_{X}Y-\nabla_{Y}X-[X,Y],\nonumber\\
		 R(X,Y)Z &:=\nabla_{X}\nabla_{Y}Z-\nabla_{Y}\nabla_{X}Z-\nabla_{[X,Y]}Z,\nonumber
	\end{alignat*}
	where $X,Y,Z$ are vector fields on $M$. By definition, a linear connection is \textit{flat} 
	if the torsion and curvature tensor are identically zero on $M$. A manifold endowed with a 
	flat linear connection is called an \textit{affine manifold}. 

\begin{definition}\label{def:3.7} 
	A dualistic structure $(h,\nabla,\nabla^{*})$ is \textit{dually flat} if both $\nabla$ and $\nabla^{*}$ are flat.
\end{definition}
\begin{proposition}\label{prop:3.8} 
	Let $(h,\nabla,\nabla^{*})$ be a dualistic structure on $M$ and let $(g,J,\omega)$ be 
	the almost Hermitian structure on $TM$ associated to $(h,\nabla)$ via Dombrowski's construction. 
	The following are equivalent.
	\begin{enumerate}[(i)]
	\item $(TM,g,J,\omega)$ is a K\"{a}hler manifold.
	\item $(M,h,\nabla,\nabla^{*})$ is dually flat.
	\end{enumerate}
\end{proposition}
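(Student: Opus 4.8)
The plan is to verify the two remaining Kähler conditions directly on $TM$ — closedness $d\omega=0$ and vanishing of the Nijenhuis tensor $N^{J}$ (the almost Hermitian identities $J^{2}=-Id$, $g(J\cdot,J\cdot)=g(\cdot,\cdot)$ and $\omega=g(J\cdot,\cdot)$ being already built into Dombrowski's construction) — and to translate each into a statement about the torsion and curvature of $\nabla$ and $\nabla^{*}$. Throughout I use the identification $T_{u}(TM)\cong T_{p}M\oplus T_{p}M$ of Proposition \ref{cor:2.7}, writing an element as a pair $(v,w)$ with horizontal part $v=\pi_{*_{u}}A$ and vertical part $w=KA$; for a vector field $X$ on $M$ I abbreviate by $X^{h}$ and $X^{v}$ the sections $u\mapsto(X_{\pi(u)},0)$ and $u\mapsto(0,X_{\pi(u)})$. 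The first step is to record, either from the connector formalism or from the coordinate expressions of Proposition \ref{prop:4.1}, the bracket relations
\begin{align*}
[X^{v},Y^{v}]=0, && [X^{h},Y^{v}]=(\nabla_{X}Y)^{v}, && [X^{h},Y^{h}]_{u}=[X,Y]^{h}-\big(R(X,Y)u\big)^{v},
\end{align*}
together with the readily checked values $J(X^{h})=X^{v}$, $J(X^{v})=-X^{h}$ and $\omega(X^{h},Y^{v})=h(X,Y)$, $\omega(X^{h},Y^{h})=\omega(X^{v},Y^{v})=0$ coming from the defining formulas of $g,J,\omega$.

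Using these relations, I would compute $N^{J}$ on the three types of pairs of lifts. Each evaluation collapses, after cancellation, to a combination of the torsion $T$ and curvature $R$ of $\nabla$; for instance
\begin{align*}
N^{J}(X^{h},Y^{h})=-\big(T(X,Y)\big)^{h}-\big(R(X,Y)u\big)^{v},
\end{align*}
and the mixed and vertical cases give entirely analogous expressions. Since these must vanish for all $X,Y$ and all $u\in T_{p}M$, one reads off that $N^{J}\equiv 0$ if and only if $T\equiv 0$ and $R\equiv 0$, i.e. if and only if $\nabla$ is flat. Thus integrability of $J$ is equivalent to flatness of $\nabla$ alone.

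For the fundamental form I would evaluate $d\omega$ on triples of lifts via $d\omega(A,B,C)=\sum_{\textup{cyc}}A\,\omega(B,C)-\sum_{\textup{cyc}}\omega([A,B],C)$. The cases with at least two vertical entries vanish identically, and the two surviving cases simplify, using the defining relation of $\nabla^{*}$ to rewrite $(\nabla_{X}h)(Y,Z)-(\nabla_{Y}h)(X,Z)+h(T(X,Y),Z)$ as $h(T^{*}(X,Y),Z)$, where $T^{*}$ is the torsion of $\nabla^{*}$:
\begin{align*}
d\omega(X^{h},Y^{h},Z^{v})&=h\big(T^{*}(X,Y),Z\big),\\
d\omega(X^{h},Y^{h},Z^{h})&=-h(R(X,Y)u,Z)+h(R(X,Z)u,Y)-h(R(Y,Z)u,X).
\end{align*}
In particular, once $\nabla$ is flat the second line vanishes automatically, and $d\omega=0$ becomes equivalent to $T^{*}\equiv 0$.

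It then remains to assemble the equivalences. If $(TM,g,J,\omega)$ is Kähler, then $N^{J}=0$ forces $\nabla$ to be flat, after which $d\omega=0$ forces $T^{*}=0$; differentiating the duality relation of Definition \ref{def:3.6} twice yields $g(R(X,Y)Z,W)=-g(Z,R^{*}(X,Y)W)$, so $R^{\nabla}\equiv 0$ gives $R^{\nabla^{*}}\equiv 0$. Hence $\nabla^{*}$ is also flat and $(M,h,\nabla,\nabla^{*})$ is dually flat. The converse reads off the same chain in reverse. I expect the main obstacle to be the bookkeeping in the bracket-based computations of $N^{J}$ and $d\omega$ — in particular tracking exactly where the torsions of $\nabla$ and of $\nabla^{*}$ enter — together with establishing the curvature duality $R^{\nabla}\equiv 0\Leftrightarrow R^{\nabla^{*}}\equiv 0$; once the two displayed identities for $d\omega$ and the formula for $N^{J}$ are secured, the equivalence follows immediately.
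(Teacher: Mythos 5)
Your argument is correct, and it is worth noting that the paper itself does not prove Proposition \ref{prop:3.8} at all: the stated ``proof'' is a citation of \cite{Dombrowski,Molitor-exponential}. What you have written is essentially the standard argument that one finds in those references, so there is no divergence of method to report, only the fact that you supply the details the paper omits. The key identities all check out: with the horizontal/vertical splitting of Proposition \ref{cor:2.7} one indeed has $[X^{h},Y^{v}]=(\nabla_{X}Y)^{v}$ and $[X^{h},Y^{h}]_{u}=[X,Y]^{h}-\big(R(X,Y)u\big)^{v}$ (verifiable in the coordinates $(q,r)$ of Proposition \ref{prop:4.1}), the Nijenhuis computation gives $N^{J}(X^{h},Y^{h})=-\big(T(X,Y)\big)^{h}-\big(R(X,Y)u\big)^{v}$ with analogous mixed and vertical cases, and the evaluation of $d\omega$ yields $d\omega(X^{h},Y^{h},Z^{v})=(\nabla_{X}h)(Y,Z)-(\nabla_{Y}h)(X,Z)+h(T(X,Y),Z)=h\big(T^{*}(X,Y),Z\big)$, the last equality following from the duality relation of Definition \ref{def:3.6} exactly as you say. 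Two small points deserve emphasis. First, your order of deductions matters and is handled correctly: the purely horizontal component of $d\omega$ involves only the curvature of $\nabla$, so one must first extract $T=R=0$ from $N^{J}=0$ (via Newlander--Nirenberg) before reading $d\omega=0$ as $T^{*}=0$. Second, the curvature duality $h(R(X,Y)Z,W)+h\big(Z,R^{*}(X,Y)W\big)=0$, obtained by applying $XY-YX-[X,Y]$ to $h(Z,W)$, is exactly what closes the loop to flatness of $\nabla^{*}$; without it the equivalence with \emph{dual} flatness (rather than flatness of $\nabla$ together with $T^{*}=0$) would not follow. With those two points in place, the proof is complete.
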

\begin{proof}
	See \cite{Dombrowski,Molitor-exponential}.
\end{proof}

	We now direct our attention to the coordinate expressions for $g,J$ and $\omega$. 

\begin{definition}
	Suppose $(M,\nabla)$ is an affine manifold. An \textit{affine coordinate system} is 
	a coordinate system $(x_{1},...,x_{n})$ defined on some open set $U\subseteq M$ such that 
	\begin{eqnarray*}
		 \nabla_{\tfrac{\partial}{\partial x_{i}}}\dfrac{\partial}{\partial x_{j}}=0
	\end{eqnarray*}
	for all $i,j=1,...,n$. 
\end{definition}

	It can be shown that for every point $p$ in an affine manifold $M$, there is an affine coordinate system 
	$(x_{1},...,x_{n})$ defined on some neighborhood $U\subseteq M$ of $p$ (see \cite{shima}). 
\begin{proposition}\label{prop:4.1}
	Let $(h,\nabla,\nabla^{*})$ be a dually flat structure on a manifold $M$ and let $(g, J,\omega)$ 
	be the K\"{a}hler structure on $TM$ associated to $(h,\nabla)$ via Dombrowski's construction. 
	Let $x=(x_{1},...,x_{n})$ be an affine coordinate system with respect to $\nabla$ on $U\subseteq M$, 
	and let $(q,r)=(q_{1},...,q_{n},r_{1},...,r_{n})$ denote the corresponding coordinates on $\pi^{-1}(U)$, 
	as described before Lemma \ref{nfkewefnknknk}. Then, in the coordinates $(q,r)$,
	\begin{equation}\label{eq:4.2}
		g=\begin{bmatrix}h_{ij} & 0\\
		0 & h_{ij}
		\end{bmatrix},\quad J=
		\begin{bmatrix}0 & -I_{n}\\
		I_{n} & 0
		\end{bmatrix},
		\quad\omega=
		\begin{bmatrix}0 & h_{ij}\\
		-h_{ij} & 0
		\end{bmatrix},
	\end{equation} 
	where $h_{ij}=h\big (\tfrac{\partial}{\partial x_{i}},\tfrac{\partial}{\partial x_{j}}\big)$,
	$i,j=1,...,n$. 
\end{proposition}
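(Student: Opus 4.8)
The plan is to evaluate the three tensors on the coordinate frame $\{\tfrac{\partial}{\partial q_{a}},\tfrac{\partial}{\partial r_{a}}\}_{a=1}^{n}$ of $\pi^{-1}(U)$, by first transporting this frame across the identification $T_{u}(TM)\cong T_{p}M\oplus T_{p}M$ of Proposition \ref{cor:2.7} (with $p=\pi(u)$), and then reading off the defining formulas of Dombrowski's construction. The whole argument hinges on computing the two maps $\pi_{*_{u}}$ and $K$ on this frame.

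First I would exploit that $x$ is affine: by definition $\nabla_{\partial/\partial x_{i}}\partial/\partial x_{j}=0$ on $U$, so all Christoffel symbols $\Gamma_{ij}^{k}$ vanish there. Substituting $\Gamma_{aj}^{k}\equiv 0$ into the formulas defining the connector $K$ (stated just before Lemma \ref{nfkewefnknknk}) collapses them to
\[
K_{u}\Bigl(\tfrac{\partial}{\partial q_{a}}\big\vert_{u}\Bigr)=0,\qquad K_{u}\Bigl(\tfrac{\partial}{\partial r_{a}}\big\vert_{u}\Bigr)=\tfrac{\partial}{\partial x_{a}}\big\vert_{p}.
\]
Since moreover $q_{i}=x_{i}\circ\pi$, the projection $\pi$ is in these charts the linear map forgetting the $r$-coordinates, whence
\[
\pi_{*_{u}}\Bigl(\tfrac{\partial}{\partial q_{a}}\big\vert_{u}\Bigr)=\tfrac{\partial}{\partial x_{a}}\big\vert_{p},\qquad \pi_{*_{u}}\Bigl(\tfrac{\partial}{\partial r_{a}}\big\vert_{u}\Bigr)=0.
\]
Feeding these through the bijection $A\mapsto(\pi_{*_{u}}A,KA)$ identifies the frame with the pairs $\tfrac{\partial}{\partial q_{a}}\leftrightarrow(\tfrac{\partial}{\partial x_{a}},0)$ and $\tfrac{\partial}{\partial r_{a}}\leftrightarrow(0,\tfrac{\partial}{\partial x_{a}})$ in $T_{p}M\oplus T_{p}M$.

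It then remains to substitute these pairs into the three defining formulas. For $g$ the mixed terms vanish while the two diagonal blocks each reduce to $h_{ab}=h_{p}(\tfrac{\partial}{\partial x_{a}},\tfrac{\partial}{\partial x_{b}})$; for $\omega$ the diagonal blocks vanish and the off-diagonal blocks are $\pm h_{ab}$; and the formula $J_{u}(v,w)=(-w,v)$ sends $\tfrac{\partial}{\partial q_{a}}\mapsto\tfrac{\partial}{\partial r_{a}}$ and $\tfrac{\partial}{\partial r_{a}}\mapsto-\tfrac{\partial}{\partial q_{a}}$, reproducing the matrix of $J_{\mathbb{R}^{2n}}$. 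This yields the three displayed matrices.

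I do not anticipate any real obstacle; the only delicate point is the bookkeeping in the identification of Proposition \ref{cor:2.7}, namely that affineness is precisely what forces $K$ to annihilate the horizontal frame $\tfrac{\partial}{\partial q_{a}}$, while $\pi_{*}$ annihilates the vertical frame $\tfrac{\partial}{\partial r_{a}}$ for purely chart-theoretic reasons. Finally, smoothness of $g$, $J$, $\omega$ is immediate from these matrices, since their entries are either constant or the smooth functions $h_{ij}$, which also settles the parenthetical smoothness claim made when Dombrowski's construction was introduced.
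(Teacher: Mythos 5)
Your proof is correct: with affine coordinates the Christoffel symbols vanish, so $K$ kills the $\partial/\partial q_a$ and $\pi_*$ kills the $\partial/\partial r_a$, identifying the coordinate frame with $(\tfrac{\partial}{\partial x_a},0)$ and $(0,\tfrac{\partial}{\partial x_a})$, after which the block matrices drop out of Dombrowski's defining formulas. The paper itself gives no argument here (it only cites [Mol14]), and what you have written is exactly the standard direct verification one finds there, so there is nothing to compare beyond noting that your computation fills in the deferred proof.
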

\begin{proof}
	See \cite{Molitor2014}.
\end{proof}
\begin{corollary}
	Under the hypotheses of Proposition \ref{prop:4.1}, if $z_{k}:=q_{k}+ir_{k}$, $k=1,...,n$, 
	then $(z_{1},...,z_{n})$ are complex coordinates on the complex manifold $(TM,J)$. 
\end{corollary}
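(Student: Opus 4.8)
The statement is essentially a translation between two descriptions of the same complex structure, so the plan is to reduce everything to the defining relation for a holomorphic chart. Recall from Section \ref{nekkdwnrknfkn} that a chart $\phi:V\to\mathbb{R}^{2n}\cong\mathbb{C}^{n}$ induces the complex structure $(\phi^{-1})_{*}\circ J_{\mathbb{R}^{2n}}\circ\phi_{*}$; equivalently, $\phi$ is compatible with a given $J$ precisely when $\phi_{*}\circ J=J_{\mathbb{R}^{2n}}\circ\phi_{*}$. Under the identification $\mathbb{R}^{2n}\cong\mathbb{C}^{n}$, the chart $\widetilde{\varphi}=(q_{1},\dots,q_{n},r_{1},\dots,r_{n})$ of $\pi^{-1}(U)$ corresponds exactly to the complex coordinate functions $z_{k}=q_{k}+ir_{k}$. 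So the first step is to observe that the content of Proposition \ref{prop:4.1} is precisely this compatibility: the matrix of $J$ in the coordinates $(q,r)$ being $\begin{bmatrix}0 & -I_{n}\\ I_{n} & 0\end{bmatrix}=J_{\mathbb{R}^{2n}}$ is just the assertion $\widetilde{\varphi}_{*}\circ J=J_{\mathbb{R}^{2n}}\circ\widetilde{\varphi}_{*}$ on all of $\pi^{-1}(U)$.

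The second step is to upgrade this infinitesimal intertwining to genuine holomorphic compatibility with a complex atlas, so that $\widetilde{\varphi}$ may legitimately be called a \emph{complex} chart. Since $(h,\nabla,\nabla^{*})$ is dually flat, Proposition \ref{prop:3.8} guarantees that $(TM,g,J,\omega)$ is K\"{a}hler, so in particular $J$ is integrable: there exists a complex atlas $\mathcal{A}$ on $TM$ whose associated complex structure is $J$. I would then take an arbitrary chart $\psi$ from $\mathcal{A}$ and show that the transition map $\psi\circ\widetilde{\varphi}^{-1}$ is holomorphic by checking that its differential commutes with $J_{\mathbb{R}^{2n}}$. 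Using $\psi_{*}\circ J=J_{\mathbb{R}^{2n}}\circ\psi_{*}$ together with the relation from Step 1 rewritten as $J\circ(\widetilde{\varphi}_{*})^{-1}=(\widetilde{\varphi}_{*})^{-1}\circ J_{\mathbb{R}^{2n}}$, the chain rule gives
\begin{align*}
(\psi\circ\widetilde{\varphi}^{-1})_{*}\circ J_{\mathbb{R}^{2n}}
=\psi_{*}\circ(\widetilde{\varphi}_{*})^{-1}\circ J_{\mathbb{R}^{2n}}
=\psi_{*}\circ J\circ(\widetilde{\varphi}_{*})^{-1}
=J_{\mathbb{R}^{2n}}\circ\psi_{*}\circ(\widetilde{\varphi}_{*})^{-1}
=J_{\mathbb{R}^{2n}}\circ(\psi\circ\widetilde{\varphi}^{-1})_{*}.
\end{align*}
By the Cauchy--Riemann equations this means $\psi\circ\widetilde{\varphi}^{-1}$ is holomorphic, and the symmetric computation shows $\widetilde{\varphi}\circ\psi^{-1}$ is holomorphic as well. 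Hence $\widetilde{\varphi}$ is holomorphically compatible with $\mathcal{A}$ and can be adjoined to it, so $(z_{1},\dots,z_{n})$ are complex coordinates on $(TM,J)$; moreover the complex structure they induce is $(\widetilde{\varphi}^{-1})_{*}\circ J_{\mathbb{R}^{2n}}\circ\widetilde{\varphi}_{*}=J$, confirming consistency.

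I expect the only real subtlety to be the passage in Step 2 from the pointwise condition $\widetilde{\varphi}_{*}\circ J=J_{\mathbb{R}^{2n}}\circ\widetilde{\varphi}_{*}$ to honest holomorphy of the transition functions: a single chart whose differential is complex-linear does not by itself produce a complex manifold structure, and it is the \emph{integrability} of $J$ (supplied by K\"{a}hlerness through Proposition \ref{prop:3.8}) that furnishes the ambient atlas $\mathcal{A}$ against which the Cauchy--Riemann check is performed. Once integrability is invoked, the remainder is the short chain-rule computation displayed above, and the result follows.
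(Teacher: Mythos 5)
Your proof is correct and is precisely the argument the paper leaves implicit: the corollary is stated without proof, being presented as an immediate consequence of Proposition \ref{prop:4.1} (which gives $\widetilde{\varphi}_{*}\circ J=J_{\mathbb{R}^{2n}}\circ\widetilde{\varphi}_{*}$) together with the integrability of $J$ supplied by Proposition \ref{prop:3.8}. Your two-step reduction --- reading the matrix identity as complex-linearity of the differential and then checking holomorphy of the transition maps against the integrating atlas via the Cauchy--Riemann equations --- correctly fills in exactly those details, so no further comment is needed.
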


\section{Ricci curvature}\label{nkndkefknkn}

	Let $N$ be a K\"{a}hler manifold with K\"{a}hler metric $g$. We denote by Ric the Ricci tensor of $g$,
	\begin{eqnarray*}
		\textup{Ric}(X,Y):=\textup{trace}\{ Z\mapsto R(Z,X)Y\} ,
	\end{eqnarray*}
	where $X,Y, Z$ are vector fields on $N$ and $R$ is the curvature tensor of $g$.

	On the complexified tangent bundle $TN^{\mathbb{C}}=TN\otimes_{\mathbb{R}}\mathbb{C}$, 
	we extend $\mathbb{C}$-linearly every tensor of $N$ at every point $p\;\in\;N$. 
	For simplicity, we use the same symbols ($g$, Ric, etc) to indicate the corresponding $\mathbb{C}$-linear extensions.

	Regarding local computations and indices, Greek indices $\alpha, \beta, \gamma$ shall run over $1,..., n$ 
	while capital letters $A, B, C, ...$ shall run over $1,... ,n,\overline{1},\overline{2},...,\overline{n}$. 
	Let $(z_1,..., z_n)$ be a system of complex coordinates on $N$. We denote by $x_{\alpha}$ and $y_{\alpha}$ the 
	real and imaginary part of $z_{\alpha}$, i.e., $z_{\alpha}= x_{\alpha}+iy_{\alpha}$. With this notation, the vectors
	\begin{displaymath}
		Z_{\alpha}:=\frac{\partial}{\partial z_{\alpha}}=\frac{1}{2}\biggl(\frac{\partial}{\partial x_{\alpha}}
		-i\frac{\partial}{\partial y_{\alpha}}\biggr)\qquad 
		\textup{and}\qquad \overline{Z}_{\alpha}:=\frac{\partial}{\partial \overline{z}_{\alpha}}
		=\frac{1}{2}\biggl(\frac{\partial}{\partial x_{\alpha}}+i\frac{\partial}{\partial y_{\alpha}}\biggr),
	\end{displaymath}
	where $\alpha=1,...,n$, form a basis for $TN^{\mathbb{C}}$. Let $\textup{Ric}_{AB}=\textup{Ric}(Z_{A},Z_{B})$ 
	be the components of the Ricci tensor in this basis. As it is well-known, these components 
	are elegantly expressed via the following formulas:
	\begin{equation}
	\textup{Ric}_{\alpha\beta}=\textup{Ric}_{\overline{\alpha}\overline{\beta}}\equiv0,
	\quad\textup{Ric}_{\alpha\overline{\beta}}
	=\overline{\textup{Ric}_{\overline{\alpha}\beta}}\qquad \textup{and}\qquad\textup{Ric}_{\alpha\overline{\beta}}
	=-\frac{{\displaystyle \partial^{2}\ln\det(G)}}{\partial z_{\alpha}\partial\overline{z}_{\beta}},\label{eq:4.1}
	\end{equation}
	where $G:=(g_{\alpha\overline{\beta}})_{1\leq\alpha,\beta\leq n}$ is the associated Hermitian matrix. \\

	We now specialize to the case $N = TM$, assuming that $g$ is the K\"{a}hler metric associated to a
	dually flat structure $(h,\nabla,\nabla^{*})$ on $M$ via Dombrowski's construction.

	Fix an affine coordinate system $(x_{1},...,x_{n})$ on an open set $U\subseteq M$ with respect to $\nabla$, and 
	let $(q,r)=(q_{1},...,q_{n},r_{1},...,r_{n})$ be the corresponding coordinates on $\pi^{-1}(U)$, as described 
	before Lemma \ref{nfkewefnknknk}, where $\pi:TM\to M$ is the canonical projection.

	Given $1\leq\alpha\leq n$, define $z_{\alpha}:=q_{\alpha}+ir_{\alpha}$. Then $(z_{1},...,z_{n})$ are complex coordinates 
	on $\pi^{-1}(U)\subseteq TM$. Applying \eqref{eq:4.1}, we obtain
	\begin{equation}
		\label{eq:4.3} g_{\alpha\overline{\beta}}=
		\frac{1}{2}h_{\alpha\beta}\circ\pi\qquad\textup{and}\qquad\textup{Ric}_{\alpha\overline{\beta}}
		=-\frac{1}{4}\biggl(\frac{{\displaystyle \partial^{2}\ln d}}{\partial x_{\alpha}\partial x_{\beta}}\biggr)
		\circ\pi,
	\end{equation}
	where $d$ is the determinant of the matrix $(h_{\alpha\beta})$. 
	The second formula in (\ref{eq:4.3}) is the local expression for the Ricci tensor 
	in the basis $\{Z_{\alpha}, \overline{Z}_{\alpha}\}$. Returning to the coordinates 
	$(q,r)$, a direct calculation using

	\begin{eqnarray*}
		\dfrac{\partial}{\partial q_{k}}=\dfrac{\partial}{\partial z_{k}}+\frac{\partial}{\partial\overline{z}_{k}}
		\qquad\textup{and}\qquad\dfrac{\partial}{\partial r_{k}}=
		i\biggl(\frac{\partial}{\partial z_{k}}-\frac{\partial}{\partial\overline{z}_{k}}\biggr),
	\end{eqnarray*}
	shows the following result (see \cite{Molitor2014}). 

\begin{proposition}\label{prop:4.2}
	Let $(h,\nabla,\nabla^{*})$ be a dually flat structure on $M$ and let $g$ be the K\"{a}hler metric on $TM$ 
	associated to $(h,\nabla)$ via Dombrowski's construction. If $x = (x_1,..., x_n)$ is an affine coordinate system 
	on $M$ with respect to $\nabla$, then in the coordinates $(q,r)$, the matrix representation of the Ricci tensor of $g$ is

	\begin{eqnarray}
		\textup{Ric}\bigl(q,r\bigr)=
		\begin{bmatrix}
			\beta_{\alpha\beta} & 0\\
			0 & \beta_{\alpha\beta}
		\end{bmatrix},
		\qquad\textup{where}\quad\:
		\beta_{\alpha\beta}=
		-\frac{1}{2}\frac{{\displaystyle \partial^{2}\ln d}}{\partial x_{\alpha}\partial x_{\beta}},
	\end{eqnarray}
	and where $d$ is the determinant of the matrix $h_{\alpha\beta}=h\big(\tfrac{\partial}{\partial x_{\alpha}}
	,\tfrac{\partial}{\partial x_{\alpha}}\big)$.
\end{proposition}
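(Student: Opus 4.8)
The plan is to start from the component formulas already recorded in the complex frame $\{Z_{\alpha},\overline{Z}_{\alpha}\}$ and transport them to the real frame $\{\partial/\partial q_{k},\partial/\partial r_{k}\}$ by means of the two change-of-basis relations displayed just above the statement. The essential input is the second formula in \eqref{eq:4.3} together with \eqref{eq:4.1}: in the chosen complex coordinates one has $\textup{Ric}_{\alpha\beta}=\textup{Ric}_{\overline{\alpha}\overline{\beta}}\equiv 0$, while
\begin{eqnarray*}
	\textup{Ric}_{\alpha\overline{\beta}}=-\tfrac{1}{4}\Bigl(\tfrac{\partial^{2}\ln d}{\partial x_{\alpha}\partial x_{\beta}}\Bigr)\circ \pi.
\end{eqnarray*}
The decisive observation is that this quantity is real and, by equality of mixed partials, symmetric in $\alpha,\beta$. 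Writing $R_{\alpha\overline{\beta}}$ for it, the symmetry of $\textup{Ric}$ then gives $\textup{Ric}(\overline{Z}_{\alpha},Z_{\beta})=\textup{Ric}(Z_{\beta},\overline{Z}_{\alpha})=R_{\beta\overline{\alpha}}=R_{\alpha\overline{\beta}}$.

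First I would substitute $\partial/\partial q_{\alpha}=Z_{\alpha}+\overline{Z}_{\alpha}$ and $\partial/\partial r_{\alpha}=i(Z_{\alpha}-\overline{Z}_{\alpha})$ into each of the three types of entries $\textup{Ric}(\partial/\partial q_{\alpha},\partial/\partial q_{\beta})$, $\textup{Ric}(\partial/\partial q_{\alpha},\partial/\partial r_{\beta})$ and $\textup{Ric}(\partial/\partial r_{\alpha},\partial/\partial r_{\beta})$, and expand using the $\mathbb{C}$-bilinearity of the extended Ricci tensor. Each expansion produces four terms, the two pure $ZZ$ and $\overline{Z}\,\overline{Z}$ terms vanishing by \eqref{eq:4.1}. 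For the $qq$ and $rr$ entries the surviving mixed terms add up to $R_{\alpha\overline{\beta}}+R_{\beta\overline{\alpha}}=2R_{\alpha\overline{\beta}}=-\tfrac{1}{2}\tfrac{\partial^{2}\ln d}{\partial x_{\alpha}\partial x_{\beta}}=\beta_{\alpha\beta}$; in the $rr$ case the factor $i^{2}=-1$ combines with the minus signs from the $Z-\overline{Z}$ expansion to reproduce the very same value. For the mixed $qr$ entry the factor $i$ multiplies $-R_{\alpha\overline{\beta}}+R_{\beta\overline{\alpha}}$, which vanishes by the symmetry noted above; this is precisely what forces the off-diagonal blocks to be zero.

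Assembling these three computations yields exactly the block-diagonal matrix $\begin{bmatrix}\beta_{\alpha\beta}&0\\0&\beta_{\alpha\beta}\end{bmatrix}$ claimed in the statement. There is no conceptual obstacle here, the result being a direct calculation, so the only thing to watch is the bookkeeping: tracking the factors of $i$ (and of $i^{2}=-1$ in the $rr$ block), the signs coming from the $Z-\overline{Z}$ combinations, and the correct use of the reality and symmetry of $R_{\alpha\overline{\beta}}$, which together make the cross terms cancel in the off-diagonal blocks while reinforcing one another (thereby absorbing the factor $\tfrac14$ into $\tfrac12$) in the diagonal blocks.
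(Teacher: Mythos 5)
Your proposal is correct and is exactly the computation the paper itself invokes: the text immediately preceding the proposition states that the result follows from "a direct calculation using" the change-of-basis relations $\partial/\partial q_{k}=Z_{k}+\overline{Z}_{k}$ and $\partial/\partial r_{k}=i(Z_{k}-\overline{Z}_{k})$ applied to \eqref{eq:4.1} and \eqref{eq:4.3}, deferring the details to the cited reference. Your bookkeeping of the vanishing pure terms, the doubling $R_{\alpha\overline{\beta}}+R_{\beta\overline{\alpha}}=2R_{\alpha\overline{\beta}}$ in the diagonal blocks, and the cancellation in the mixed $qr$ entries all check out.
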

	Recall that the scalar curvature is, by definition, the trace of the Ricci tensor.
\begin{corollary}\label{cor:4.3}
	Under the hypotheses of Proposition \ref{prop:4.2}, the scalar curvature of $g$ is given in the coordinates $(q,r)$ 
	by 
	\begin{eqnarray}\label{eq:4.4}
	\textup{Scal}(q,r)=-\sum_{\alpha,\beta=1}^{n}h^{\alpha\beta}
		\frac{\partial^2\ln d}{\partial x_{\alpha}\partial x_{\beta}},
	\end{eqnarray}
	where $d$ is the determinant of the matrix $h_{\alpha\beta}$, and where 
	$h^{\alpha\beta}$ are the coefficients of the inverse matrix of $h_{\alpha\beta}$.
\end{corollary}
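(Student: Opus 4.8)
The plan is to compute the scalar curvature directly as the metric trace of the Ricci tensor, exploiting the fact that both tensors have been put in explicit block-diagonal form in the coordinates $(q,r)$. By definition the scalar curvature is the $g$-trace of the Ricci tensor, so I would start from
\begin{eqnarray*}
	\textup{Scal} = \sum_{A,B} g^{AB}\,\textup{Ric}_{AB},
\end{eqnarray*}
where $(g^{AB})$ is the inverse of the matrix representing $g$ in the coordinates $(q,r)$, and the indices $A,B$ range over the full $2n$ coordinate labels. The point to emphasize is that this is a contraction against the inverse metric, not a naive sum of diagonal entries.

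The first concrete step is to read off $g^{AB}$ from Proposition \ref{prop:4.1}. Since $g$ is block-diagonal with the same block $(h_{\alpha\beta})$ appearing in the $q$- and $r$-slots, its inverse is block-diagonal with the block $(h^{\alpha\beta})$ appearing twice, where $h^{\alpha\beta}$ are the entries of the inverse of $(h_{\alpha\beta})$; the off-diagonal blocks vanish. The second step is to insert the matrix form of $\textup{Ric}$ supplied by Proposition \ref{prop:4.2}, namely the block-diagonal matrix with $\beta_{\alpha\beta}$ repeated in both diagonal blocks and zero off-diagonal blocks.

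Because both $g^{-1}$ and $\textup{Ric}$ are block-diagonal with no cross terms, the contraction splits as a sum over the two identical $n\times n$ blocks, giving
\begin{eqnarray*}
	\textup{Scal} = \sum_{\alpha,\beta=1}^{n} h^{\alpha\beta}\beta_{\alpha\beta}
	+ \sum_{\alpha,\beta=1}^{n} h^{\alpha\beta}\beta_{\alpha\beta}
	= 2\sum_{\alpha,\beta=1}^{n} h^{\alpha\beta}\beta_{\alpha\beta}.
\end{eqnarray*}
Finally, substituting $\beta_{\alpha\beta}=-\tfrac{1}{2}\,\partial^{2}\ln d/\partial x_{\alpha}\partial x_{\beta}$, the factor $2$ cancels the $-\tfrac{1}{2}$ and produces exactly the claimed formula \eqref{eq:4.4}.

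There is essentially no substantive obstacle here: this is a direct algebraic consequence of the two preceding propositions, and the only points requiring care are bookkeeping ones. I would make sure that the trace is taken against the inverse metric (so that the result is genuinely coordinate-independent as a scalar function), that the inverse of a block-diagonal matrix with repeated blocks is itself block-diagonal with the corresponding inverse blocks, and that the two diagonal blocks contribute equally, which is what generates the numerical factor that combines cleanly with the $-\tfrac{1}{2}$ in $\beta_{\alpha\beta}$.
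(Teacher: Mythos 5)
Your computation is correct and is exactly the argument the paper intends: the corollary follows by taking the $g$-trace of the Ricci tensor using the block-diagonal matrices of Propositions \ref{prop:4.1} and \ref{prop:4.2}, with the factor $2$ from the two identical blocks cancelling the $-\tfrac{1}{2}$ in $\beta_{\alpha\beta}$. The paper gives no written proof beyond the remark that the scalar curvature is the trace of the Ricci tensor, so your write-up simply makes that implicit calculation explicit.
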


	Observe that the scalar curvature on $TM$ can be written $\textup{Scal}=S\circ\pi$, where
	$S:M\to \mathbb{R}$ is a globally defined function whose local expression is given by the right hand side 
	of \eqref{eq:4.4}. The function $S$ is called \textit{Hessian scalar curvature} (see \cite{shima}). 

\section{Statistical manifolds}\label{nfknwkdenknwkn}

	General references are \cite{Jost2,Amari-Nagaoka,Murray}. 
\begin{definition}\label{def:5.1}
	A \textit{statistical manifold} is a pair $(S,j)$, where $S$ is a manifold and where $j$ 
	is an injective map from $S$ to the space of all probability density functions $p$ 
	defined on a fixed measure space $(\Omega,dx)$: 
	\begin{eqnarray*}
		j:S\hookrightarrow\Bigl\{ p:\Omega\to\mathbb{R}\;\Bigl|\; p\:\textup{is measurable, }p\geq 
		0\textup{ and }\int_{\Omega}p(x)dx=1\Bigr\}.
	\end{eqnarray*}
\end{definition}

	If $\xi=(\xi_{1},...,\xi_{n})$ is a coordinate system on a statistical manifold $S$, then we shall 
	indistinctly write $p(x;\xi)$ or $p_{\xi}(x)$ for the probability density function determined by $\xi$.

	Given a ``reasonable" statistical manifold $S$, it is possible to define a metric $h_{F}$ and a 
	family of connections $\nabla^{(\alpha)}$ on $S$ $(\alpha\in\mathbb{R})$ in the following way: 
	for a chart $\xi=(\xi_{1},...,\xi_{n})$ of $S$, define
	\begin{alignat*}{1}
		\bigl(h_F\bigr)_{\xi}\bigl(\partial_{i},\partial_{j}\bigr) & :=
		\mathbb{E}_{p_{\xi}}\bigl(\partial_{i}\ln\bigl(p_{\xi}\bigr)\cdotp\partial_{j}\ln\bigl(p_{\xi}\bigr)\bigr),
		\nonumber\\
		\Gamma_{ij,k}^{(\alpha)}\bigl(\xi\bigr) & :=
		\mathbb{E}_{p_{\xi}}\bigl[\bigl(\partial_{i}\partial_{j}\ln\bigl(p_{\xi}\bigr)
		+\tfrac{1-\alpha}{2}\partial_{i}\ln\bigl(p_{\xi}\bigr)\cdotp\partial_{j}\ln\bigl(p_{\xi}\bigr)\bigr),
		\partial_{k}\ln\bigl(p_{\xi}\bigr)\bigr],\label{eq:48}\nonumber
	\end{alignat*}
	where $\mathbb{E}_{p_{\xi}}$ denotes the mean, or expectation, with respect to the probability 
	$p_{\xi}dx$, and where $\partial_{i}$ is a shorthand for $\tfrac{\partial}{\partial\xi_{i}}$. 
	It can be shown that if the above expressions are defined and smooth for every chart of $S$, 
	then $h_F$ is a well defined metric on $S$ called the \textit{Fisher metric}, and that the 
	$\Gamma_{ij,k}^{(\alpha)}$'s define a connection $\nabla^{(\alpha)}$ via the formula 
	$\Gamma_{ij,k}^{(\alpha)}(\xi)=(h_F)_{\xi}(\nabla_{\partial_{i}}^{(\alpha)}\partial_{i},\partial_{j})$, 
	which is called the \textit{$\alpha$-connection}.

	Among the $\alpha$-connections, the $(\pm1)$-connections are particularly important; the 1-connection is
	usually referred to as the \textit{exponential connection}, also denoted by $\nabla^{(e)}$, while the 
	$(−1)$-connection is referred to as the \textit{mixture connection}, denoted by $\nabla^{(m)}$.

	In this paper, we will only consider statistical manifolds $S$ for which the Fisher metric and
	$\alpha$-connections are well defined.

\begin{proposition}\label{prop:5.2}
	Let $S$ be a statistical manifold. Then, $(h_F,\nabla^{(\alpha)},\nabla^{(-\alpha)})$ is a dualistic structure on $S$. 
	In particular, $\nabla^{(-\alpha)}$ is the dual connection of $\nabla^{(\alpha)}$.
\end{proposition}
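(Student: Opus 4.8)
The plan is to verify directly the defining identity of a dualistic structure from Definition \ref{def:3.6}, namely
\[
	X\,h_F(Y,Z) = h_F(\nabla^{(\alpha)}_X Y, Z) + h_F(Y, \nabla^{(-\alpha)}_X Z),
\]
for all vector fields $X,Y,Z$ on $S$. First I would reduce this to a coordinate computation. Consider the trilinear expression
\[
	T(X,Y,Z):= X\,h_F(Y,Z) - h_F(\nabla^{(\alpha)}_X Y, Z) - h_F(Y, \nabla^{(-\alpha)}_X Z).
\]
Linearity of $T$ in $X$ over $C^{\infty}(S)$ follows at once from axiom (i) of Definition \ref{def:2.1}, while linearity in $Y$ and in $Z$ follows because the Leibniz terms produced by differentiating a function $f$ in $X\,h_F(fY,Z)$ (resp. $X\,h_F(Y,fZ)$) cancel against those coming from $\nabla^{(\alpha)}_X(fY)$ (resp. $\nabla^{(-\alpha)}_X(fZ)$). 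Hence $T$ is a tensor, and it suffices to check $T(\partial_k,\partial_i,\partial_j)=0$ on a coordinate frame $(\xi_1,\dots,\xi_n)$. Writing $\ell:=\ln p_{\xi}$ and using the defining relation $\Gamma^{(\alpha)}_{ij,k}=(h_F)(\nabla^{(\alpha)}_{\partial_i}\partial_j,\partial_k)$ together with the symmetry of $h_F$, this is equivalent to the scalar identity
\[
	\partial_k (h_F)_{ij} = \Gamma^{(\alpha)}_{ki,j} + \Gamma^{(-\alpha)}_{kj,i}.
\]

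To establish this identity I would differentiate the defining integral $(h_F)_{ij}=\int_{\Omega}\partial_i\ell\cdot\partial_j\ell\; p\,dx$ under the integral sign. Using $\partial_k p = p\,\partial_k\ell$ and the product rule produces exactly three terms,
\[
	\partial_k (h_F)_{ij} = \mathbb{E}_{p_\xi}[\partial_k\partial_i\ell\cdot\partial_j\ell] + \mathbb{E}_{p_\xi}[\partial_i\ell\cdot\partial_k\partial_j\ell] + \mathbb{E}_{p_\xi}[\partial_i\ell\cdot\partial_j\ell\cdot\partial_k\ell].
\]
On the other side, expanding $\Gamma^{(\alpha)}_{ki,j}$ and $\Gamma^{(-\alpha)}_{kj,i}$ from their definition yields the two second-derivative terms $\mathbb{E}[\partial_k\partial_i\ell\cdot\partial_j\ell]$ and $\mathbb{E}[\partial_k\partial_j\ell\cdot\partial_i\ell]$, together with two cubic terms carrying the coefficients $\tfrac{1-\alpha}{2}$ (from $\nabla^{(\alpha)}$) and $\tfrac{1-(-\alpha)}{2}=\tfrac{1+\alpha}{2}$ (from $\nabla^{(-\alpha)}$). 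The crucial observation is that these coefficients sum to $1$, so the two cubic contributions merge into the single term $\mathbb{E}[\partial_k\ell\cdot\partial_i\ell\cdot\partial_j\ell]$, which matches the last term above; the $\alpha$-dependence cancels precisely, and this cancellation is exactly what forces the dual partner of $\nabla^{(\alpha)}$ to be $\nabla^{(-\alpha)}$. Comparing the two sides term by term then gives the identity, and hence the proposition.

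The only genuinely delicate point I anticipate is the interchange of the derivative $\partial_k$ with the integral over $\Omega$, which requires the differentiation-under-the-integral hypotheses already subsumed in the standing assumption that $h_F$ and the $\alpha$-connections are well defined and smooth (the ``reasonable'' statistical manifolds of the text). In the situation relevant to this paper, where $\Omega=\{x_0,\dots,x_m\}$ carries the counting measure, the integral is a finite sum, so the interchange is automatic and no analytic obstacle remains.
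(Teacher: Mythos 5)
Your argument is correct: the reduction to the tensoriality of $T$ plus the coordinate identity $\partial_k (h_F)_{ij} = \Gamma^{(\alpha)}_{ki,j} + \Gamma^{(-\alpha)}_{kj,i}$, established by differentiating under the integral and using $\partial_k p = p\,\partial_k \ell$ so that the cubic terms combine via $\tfrac{1-\alpha}{2}+\tfrac{1+\alpha}{2}=1$, is exactly the standard verification. The paper itself gives no argument here --- it simply cites Amari--Nagaoka, where essentially this same computation is carried out --- so your proposal fills in the omitted proof along the expected lines.
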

\begin{proof}
	See \cite{Amari-Nagaoka}.
\end{proof}

	We now recall the definition of an exponential family. 
\begin{definition}\label{def:5.3} 
	An \textit{exponential family} $\mathcal{E}$ on a measure space $(\Omega,dx)$ is a set of probability 
	density functions $p(x;\theta)$ of the form
	\begin{eqnarray*}
		p(x;\theta)=\exp\biggl\{ C(x)+\sum_{i=1}^{n}\theta_{i}F_{i}(x)-\psi(\theta)\biggr\},\label{eq:49}
	\end{eqnarray*}
	where $C,F_1...,F_n$ are measurable functions on $\varOmega$, $\theta=(\theta_{1},...,\theta_{n})$ 
	is a vector varying in an open subset $\Theta$ of $\mathbb{R}^{n}$ and where $\psi$ is a function defined on $\Theta$.
\end{definition}
	In the above definition it is assumed that the family of functions $\{1,F_1,...,$ $F_n\}$ 
	is linearly independent, so that the map $p(x,\theta)\mapsto\theta$ becomes a bijection, hence defining a global 
	chart for $\mathcal{E}$. The parameters $\theta_{1},...,\theta_{n}$ are called the 
	\textit{natural} or \textit{canonical parameters} of the exponential family $\mathcal{E}$.
 
\begin{example}[\textbf{Normal distribution}]\label{exa:5.4} 
	Normal distributions,
	\begin{eqnarray*}
		p(x;\mu,\sigma)=\frac{1}{\sqrt{2\pi}\sigma}\exp\biggl\{-\frac{\bigl(x-\mu\bigr)^{2}}{2\sigma^{2}}\biggr\}
		\quad\bigl(x\in\mathbb{R}\bigr),
	\end{eqnarray*}
	form a $2$-dimensional statistical manifold, denoted by $\mathcal{N}$, parameterized by 
	$(\mu,\sigma)\in\mathbb{R}\times\mathbb{R}_{+}^{*}$, where $\mu\in\mathbb{R}$ is the mean and 
	$\sigma\in\mathbb{R}_{+}^{*}$ is the standard deviation 
	(here $\mathbb{R}_{+}^{*}:=\bigl\{ x\in\mathbb{R}\bigl|x>0\bigr\}$). It is an exponential family, because 
	$p(x;\mu,\sigma)=\exp\big\{\theta_{1}F_{1}(x)+\theta_{2}F_{2}(x)-\psi(\theta)\big\}$, where
	\begin{align*}
	 & \theta_{1}=\frac{\mu}{\sigma^{2}},\quad\theta_{2}=-\frac{1}{2\sigma^{2}},\quad C(x)=0,
	\quad F_{1}(x)= x,\quad F_{2}(x)= x^{2},\\
	 & \psi(\theta)=-\frac{(\theta_{1}\bigr)^{2}}{4\theta_{2}}+\frac{1}{2}\ln\Big(-\frac{\pi}{\theta_{2}}\Big).
	\end{align*}
\end{example}
\begin{example}\label{exa:5.5}
	Given a finite set $\Omega=\{ x_{1},...,x_{n}\}$, define
 	\begin{eqnarray*}
		\mathcal{P}_{n}^{\times}\:=\biggl\{ p:\Omega\to\mathbb{R}\:\Bigl|\: p(x)>0 
		\textup{ for all }x\in\Omega\textup{ and }\sum_{k=1}^{n}p(x_{k})=1\biggr\}.
	\end{eqnarray*}
	Elements of $\mathcal{P}_{n}^{\times}$ can be parametrized as follows: 
	$p(x;\theta)=\exp\big\{\sum_{i=1}^{n-1}\theta_{i}F_{i}(x)-\psi(\theta)\big\}$,
	where 
	\begin{eqnarray*}
		&&\theta=(\theta_{1},...,\theta_{n-1})\in\mathbb{R}^{n-1},\,\,\,\,\,\,
		F_{i}(x_{j})=\delta_{ij}\,\,\,\,\,\,\textup{(Kronecker delta)},\\
		&& \psi(\theta)=-\ln\Big(1+\sum_{i=1}^{n-1}\exp\bigl(\theta_{i}\bigr)\Big). 
	\end{eqnarray*}
	Therefere $\mathcal{P}_{n}^{\times}$ is an exponential family of dimension $n-1$. 
\end{example}
\begin{example}[\textbf{Binomial distribution}]\label{exa:5.6}
	The set of binomial distributions defined over $\Omega:=\{0,...,n\}$,
	\begin{eqnarray*}
		p(k)=\binom{n}{k}q^{k}\bigl(1-q\bigr)^{n-k},\quad (k\;\in\;\Omega,\;q\in(0,1)),
	\end{eqnarray*}
	where $\binom{n}{k}=\tfrac{n!}{(n-k)!k!}$, is a $1$-dimensional statistical manifold, 
	denoted by $\mathcal{B}(n)$, parametrized 
	by $q\in\bigl(0,1\bigr)$. It is an exponential family, because 
	$p(k)=\exp\big\{ C(k)+\theta F(k)-\psi(\theta)\big\}$,
	where
	\begin{eqnarray*}
		&&\theta=\ln\Bigl(\frac{q}{1-q}\Bigr),\quad C\bigl(k\bigr)=
		\ln\binom{n}{k},\quad F(k)= k,\quad\\
		&& \psi(\theta)= n\ln\bigl(1+\exp(\theta)\bigr).
	\end{eqnarray*}
\end{example}

\begin{proposition}\label{prop:5.7}
	Let $\mathcal{E}$ be an exponential family such as in Definition \ref{def:5.3}. Then 
	$(\mathcal{E},h_F,\nabla^{(e)},\nabla^{(m)})$ is dually flat. 
\end{proposition}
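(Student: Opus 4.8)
The plan is to work in the natural parameters $\theta=(\theta_{1},\dots,\theta_{n})$, which by Definition \ref{def:5.3} form a global chart for $\mathcal{E}$, and to show that the Christoffel symbols of $\nabla^{(e)}$ vanish identically in these coordinates. By Proposition \ref{prop:5.2} the triple $(h_{F},\nabla^{(e)},\nabla^{(m)})=(h_{F},\nabla^{(1)},\nabla^{(-1)})$ is already a dualistic structure, so the only thing left to establish is that both connections are flat, i.e.\ have vanishing torsion and curvature.

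First I would compute the derivatives of the log-likelihood $\ell(x;\theta):=\ln p(x;\theta)=C(x)+\sum_{i}\theta_{i}F_{i}(x)-\psi(\theta)$. One gets $\partial_{i}\ell=F_{i}-\partial_{i}\psi$ and, crucially, $\partial_{i}\partial_{j}\ell=-\partial_{i}\partial_{j}\psi(\theta)$, which no longer depends on $x\in\Omega$. Feeding this into the defining formula for $\Gamma_{ij,k}^{(\alpha)}$ in the case $\alpha=1$ (where the term $\tfrac{1-\alpha}{2}\partial_{i}\ell\,\partial_{j}\ell$ drops out) yields $\Gamma_{ij,k}^{(e)}=-\partial_{i}\partial_{j}\psi\cdot\mathbb{E}_{p_{\theta}}[\partial_{k}\ell]$. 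Differentiating the normalization identity $\int_{\Omega}p(x;\theta)\,dx=1$ under the integral sign shows that the expected score vanishes, $\mathbb{E}_{p_{\theta}}[\partial_{k}\ell]=0$; hence $\Gamma_{ij,k}^{(e)}\equiv 0$ for all $i,j,k$. Thus $\theta$ is an affine coordinate system for $\nabla^{(e)}$, so both its torsion and its curvature vanish and $\nabla^{(e)}$ is flat.

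It remains to show that $\nabla^{(m)}$ is flat. Its torsion vanishes for free, since the expression defining $\Gamma_{ij,k}^{(-1)}$ is manifestly symmetric in the lower indices $i,j$ (both $\partial_{i}\partial_{j}\ell$ and $\partial_{i}\ell\,\partial_{j}\ell$ are symmetric). For the curvature, rather than passing to dual coordinates I would invoke the standard relation between the curvature tensors of a pair of dual connections: for any dualistic structure $(h,\nabla,\nabla^{*})$ one has $h(R^{\nabla}(X,Y)Z,W)=-h(Z,R^{\nabla^{*}}(X,Y)W)$, so that $R^{\nabla}\equiv 0$ if and only if $R^{\nabla^{*}}\equiv 0$. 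Since $\nabla^{(m)}$ is the dual of $\nabla^{(e)}$ and the latter is flat, this forces $R^{\nabla^{(m)}}\equiv 0$. Both connections being torsion- and curvature-free, $(\mathcal{E},h_{F},\nabla^{(e)},\nabla^{(m)})$ is dually flat.

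The main obstacle is precisely the flatness of the mixture connection: unlike for $\nabla^{(e)}$, the natural parameters $\theta$ are not affine for $\nabla^{(m)}$, so a direct Christoffel-symbol computation does not immediately give the result. The cleanest route is the duality identity above. Alternatively one could introduce the expectation parameters $\eta_{i}=\partial_{i}\psi=\mathbb{E}_{p_{\theta}}[F_{i}]$, verify that $\theta\mapsto\eta$ is a diffeomorphism (its Jacobian is the Hessian of $\psi$, i.e.\ the Fisher metric, which is positive definite), and check that the mixture Christoffel symbols vanish in the $\eta$-chart; but this is more laborious, so I would prefer the duality argument.
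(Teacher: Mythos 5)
Your proof is correct. The paper itself gives no argument for this proposition and simply defers to \cite{Amari-Nagaoka}; what you have written is precisely the standard proof from that reference — vanishing of $\Gamma^{(e)}_{ij,k}$ in the natural parameters because $\partial_i\partial_j\ln p_\theta=-\partial_i\partial_j\psi$ is deterministic and the expected score is zero, followed by the duality identity $h(R(X,Y)Z,W)=-h(Z,R^{*}(X,Y)W)$ to transfer curvature-flatness to $\nabla^{(m)}$ — and you correctly handle the torsion of $\nabla^{(m)}$ separately, which is needed since the paper's definition of flatness requires both torsion and curvature to vanish.
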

\begin{proof}
	See \cite{Amari-Nagaoka}. 
\end{proof}
\begin{corollary}\label{cor:5.8} 
	The tangent bundle $T\mathcal{E}$ of an exponential family $\mathcal{E}$ is a K\"{a}hler 
	manifold for the K\"{a}hler structure $(g,J,\omega)$ associated to $(h_F,\nabla^{(e)})$ via Dombrowski's 
	construction.
\end{corollary}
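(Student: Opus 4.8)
The plan is to deduce this directly from the structural results already established, namely Proposition \ref{prop:5.7} and Proposition \ref{prop:3.8}, so the argument is essentially a matter of checking that the hypotheses line up.

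First I would record that the triple $(h_F,\nabla^{(e)},\nabla^{(m)})$ is a bona fide dualistic structure on $\mathcal{E}$. This is the content of Proposition \ref{prop:5.2} specialized to $\alpha=1$: since $\nabla^{(e)}=\nabla^{(1)}$ and $\nabla^{(m)}=\nabla^{(-1)}$, that proposition asserts that $\nabla^{(m)}$ is the $h_F$-dual connection of $\nabla^{(e)}$, so that $(h_F,\nabla^{(e)},\nabla^{(m)})$ satisfies the defining identity of Definition \ref{def:3.6}.

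Next, Proposition \ref{prop:5.7} tells us that this dualistic structure is in fact dually flat, i.e. both $\nabla^{(e)}$ and $\nabla^{(m)}$ are flat connections on $\mathcal{E}$. With this in hand, I would apply Proposition \ref{prop:3.8} to the manifold $M=\mathcal{E}$ with $h=h_F$, $\nabla=\nabla^{(e)}$ and $\nabla^{*}=\nabla^{(m)}$. The almost Hermitian structure $(g,J,\omega)$ produced by Dombrowski's construction from the pair $(h_F,\nabla^{(e)})$ is precisely the one named in the statement, and the implication (ii) $\Rightarrow$ (i) of Proposition \ref{prop:3.8} then yields that $(T\mathcal{E},g,J,\omega)$ is a K\"{a}hler manifold, as claimed.

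There is no genuine obstacle here: the corollary is a straightforward concatenation of the two propositions. The only point requiring attention is bookkeeping, namely making sure that the connection paired with $h_F$ in Dombrowski's construction is $\nabla^{(e)}$ (and not $\nabla^{(m)}$), and that its $h_F$-dual is the flat connection $\nabla^{(m)}$, so that the dual-flatness hypothesis of Proposition \ref{prop:3.8} is indeed met. Once these identifications are fixed, the conclusion is immediate.
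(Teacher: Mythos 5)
Your proposal is correct and follows essentially the same route as the paper, which proves the corollary by combining Proposition \ref{prop:5.7} (dual flatness of $(h_F,\nabla^{(e)},\nabla^{(m)})$) with the equivalence in Proposition \ref{prop:3.8}. Your extra remark invoking Proposition \ref{prop:5.2} to confirm that the triple is a dualistic structure is sound bookkeeping but does not change the argument.
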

\begin{proof}
	Follows from Proposition \ref{prop:3.8}.
\end{proof}

	In the sequel, by the K\"{a}hler structure of $T\mathcal{E}$, we will implicitly 
	refer to the K\"{a}hler structure of $T\mathcal{E}$ described in Corollary \ref{cor:5.8}.

\begin{example}[\cite{Molitor2012, Molitor-exponential}]\label{exa:5.10} 
	Let $\mathcal{P}_{n}^{\times}$ be the statistical manifold defined in Example \ref{exa:5.5}. 
	For an appropriate normalization of the Fubini-Study metric and symplectic form, it can be shown that
	there exists a map 
	\begin{eqnarray*}
		\tau:T\mathcal{P}_{n}^{\times}\to \mathbb{P}(\mathbb{C}^{n})^{\times},
	\end{eqnarray*}
	where $\mathbb{P}(\mathbb{C}^{n})^{\times}=\{[z_{1},...,z_{n}]\in \mathbb{P}(\mathbb{C}^{n})\,\vert\,
	z_{k}\neq 0\,\,\forall\,k=1,...,n\}$, with the following properties:
	\begin{enumerate}[(i)]
		\item $\tau$ is a universal covering map whose Deck transformation group 
			is isomorphic to $\mathbb{Z}^{n-1}$, 
		\item $\tau$ is holomorphic and locally isometric. 
	\end{enumerate}
	In particular, if $\textup{Deck}(\tau)$ denotes the Deck transformation group of $\tau$, then 
	$T\mathcal{P}_{n}^{\times}/\textup{Deck}(\tau)\cong
	\mathbb{P}(\mathbb{C}^{n})^{\times}$ (isomorphism of K\"{a}hler manifolds).
\end{example}
\begin{example}[\textbf{Binomial distribution} \cite{Molitor-exponential}]\label{exa:5.11} 
	Let $\mathcal{B}(n)$ be the set of binomial distributions defined over $\Omega:=\{0,...,n\}$, 
	as in Example \ref{exa:5.6}. Let $S^{2}$ be the unit sphere in $\mathbb{R}^{3}$. Consider the 
	map $\tau:T\mathcal{B}(n)\to (S^{2})^{\times}:=S^{2}-\{(\pm 1,0,0)\}$ given by 
	\begin{eqnarray*}
		\tau(q,r)=\bigg(\textup{tanh}(q/2), \dfrac{\textup{cos}(r/2)}{\textup{cosh}(q/2)}, 
		\dfrac{\textup{sin}(r/2)}{\textup{cosh}(q/2)}\bigg), 
	\end{eqnarray*}
	where $(q,r)$ are the coordinates on $T\mathcal{B}(n)$ associated to the natural parameter $\theta$, as described 
	before Lemma \ref{nfkewefnknknk}. 

	It is easy to check that if the K\"{a}hler structure of $S^{2}$ (as described in Example \ref{nkdnfkekwnk}) 
	is multiplied by $n$, then $\tau$ is a holomorphic and locally isometric universal covering 
	map whose Deck transformation group is $\textup{Deck}(\tau)\cong \mathbb{Z}$. Therefore
	$T\mathcal{B}(n)/\textup{Deck}(\tau)\cong (S^{2})^{\times}$ (isomorphism of K\"{a}hler manifolds). 
\end{example}
\begin{example}[\textbf{Normal distributions} \cite{Molitor2014}]\label{exa:5.9}
	Let $\mathcal{N}$ be the set of Gaussian distributions, as defined in 
	Example \ref{exa:5.4}. As a complex manifold, $T\mathcal{N}$ is the product $\mathbb{H}\times\mathbb{C}$, 
	where $\mathbb{H}:=\{\tau\in\mathbb{C}\:|\: Im(\tau)>0\}$ is the Poincar\'{e} upper half-plane. 
	The metric of the space $\mathbb{H}\times\mathbb{C}$ is the \textit{K\"{a}hler-Berndt metric $g_{KB}$}, 
	which can be described as follows. If $\tau=u+iv\in\mathbb{H}$ and $z=x+iy\in\mathbb{C}$, then in the coordinates
	$\bigl(u,v,x,y\bigr)$,
	\begin{eqnarray*}
		g_{KB}\bigl(\tau,z\bigr)=
		\begin{pmatrix}
		\frac{v+y^{2}}{v^{3}} & 0 & -\frac{y}{v^{2}} & 0\\
		0 & \frac{v+y^{2}}{v^{3}} & 0 & -\frac{y}{v^{2}}\\
		-\frac{y}{v^{2}} & 0 & \frac{1}{v} & 0\\
		0 & -\frac{y}{v^{2}} & 0 & \frac{1}{v}
	\end{pmatrix}.
	\end{eqnarray*}
	This metric plays an important role in the context of Number Theory, in relation to the so-called Jacobi forms 
	\cite{Berndt1998, Eichler1985}. 
\end{example}
	We end this section with some technical results that we will use in the next section.

	Let $\mathcal{E}$ be an exponential family of dimension $n$ defined over the measure space $(\Omega,dx)$, 
	with elements of the form
	\begin{eqnarray*}
		p\bigl(x;\theta\bigr)=\exp\biggl\{ C(x)+\sum_{i=1}^{n}\theta_{i}F_{i}(x)-\psi(\theta)\biggr\},
	\end{eqnarray*}
	where $C,F_{1},...,F_{n}$ are measurable functions on $\varOmega$,
	$\theta=(\theta_{1},...,\theta_{n})$ is a vector in an open subset $\Theta$ of 
	$\mathbb{R}^{n}$ and where $\psi$ is a function defined on $\Theta$.

	Given $i=1,...,n$, we defined $\eta_{i}:\mathcal{E}\to\mathbb{R}$ by 
	\begin{eqnarray*}
		\eta_{i}\bigl(\theta\bigr)\:=\mathbb{E}_{p_{\theta}}\bigl(F_{i}\bigr)
		=\int_{\Omega}F_{i}\bigl(x\bigr)p\bigl(x;\theta\bigr)dx.
	\end{eqnarray*}
	The functions $\eta_{1},...,\eta_{n}$ are called \textit{expectation parameters}. Note that, 
	if the functions $F_{i}:\Omega\to\mathbb{R}$ are not measurable, the existence of the functions 
	$\eta_{i}$ is not guaranteed. However, in the particular case where $\Omega$ is finite, the functions
	$\eta_{i}$ exist and have good properties, as described in the following result.
\begin{proposition}\label{prop:5.12}
	Let $\mathcal{E}$ be an exponential family defined over a finite set $\Omega=\{x_0,x_1,...,x_m\}$ 
	endowed with the counting measure, with $C,F_1,...,F_n:\Omega\to\mathbb{R}$, $\theta\:\in\:\Theta$ 
	and $\psi:\Theta\to\mathbb{R}$ as above. The following holds. 
	\begin{enumerate}[(i)]
	\item The set $\Theta$ can be taken equal to $\mathbb{R}^{n}$.
	\item $\bigl(\eta_{1},...,\eta_{n}\bigr)$ is a global system of affine coordinates with respect to 
		$\nabla^{(m)}$.
	\item $\dfrac{\partial\psi}{\partial\theta_{i}}=\eta_{i}$ \,\,\,\,\,\,\,for all $i=1,...,n$, 
	\item $\dfrac{\partial^{2}\psi}{\partial\theta_{i}\partial\theta_{j}}
		=\dfrac{\partial\eta_{i}}{\partial\theta_{j}}=h_{F}\Big(\dfrac{\partial}{\partial \theta_{i}}, 
		\dfrac{\partial}{\partial \theta_{j}}\Big),$ \,\,\,\,\,\,\,for all $i,j=1,...,n$.
	\item $\psi(\theta)=\ln\biggl\{\displaystyle\sum_{k=0}^{m}\exp\biggl( C\bigl(x_{k}\bigr)
			+\sum_{j=1}^{n}\theta_{j}F_{j}\bigl(x_{k}\bigr)\biggr)\biggr\}$ \,\,\,\,\,\,\,
		for all $\theta\in\mathbb{R}^{n}$.
	\end{enumerate}
\end{proposition}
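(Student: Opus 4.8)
The plan is to deduce all five statements from one elementary observation: since $\Omega$ is finite, the normalization condition $\sum_{k=0}^{m}p(x_k;\theta)=1$ is an identity valid for every $\theta$. Substituting the exponential form of $p$ and solving for the normalizer gives $e^{\psi(\theta)}=\sum_{k=0}^{m}\exp\{C(x_k)+\sum_j\theta_jF_j(x_k)\}$, which upon taking logarithms is exactly statement (v). Because the right-hand side is a finite sum of strictly positive exponentials, it converges and is smooth for all $\theta\in\mathbb{R}^{n}$; hence $\psi$ is well defined on all of $\mathbb{R}^{n}$ and the family is parametrized by the whole space, giving (i).

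Statements (iii) and (iv) then follow by differentiating (v). Differentiating once and recognizing the denominator as $e^{\psi(\theta)}$ yields $\partial\psi/\partial\theta_i=\sum_k F_i(x_k)p(x_k;\theta)=\eta_i$, which is (iii). Differentiating once more, and using $\partial p/\partial\theta_j=(F_j-\eta_j)p$ (itself a consequence of (iii)), gives $\partial^2\psi/\partial\theta_i\partial\theta_j=\partial\eta_i/\partial\theta_j=\mathbb{E}_{p_\theta}[(F_i-\eta_i)(F_j-\eta_j)]$. On the other hand, since $\partial_i\ln p=F_i-\eta_i$, the defining formula for the Fisher metric gives $h_F(\partial_i,\partial_j)=\mathbb{E}_{p_\theta}[(F_i-\eta_i)(F_j-\eta_j)]$, the very same covariance, proving (iv).

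For (ii) I would first show that $\eta=(\eta_1,\dots,\eta_n)$ is a genuine global chart. By (iv) the Jacobian $\partial\eta_i/\partial\theta_j=h_{ij}:=h_F(\partial_i,\partial_j)$ is the Fisher matrix, and $h_F(\sum_i a_i\partial_i,\sum_i a_i\partial_i)=\mathrm{Var}_{p_\theta}(\sum_i a_iF_i)$ vanishes only if $\sum_i a_iF_i$ is constant, contradicting the assumed linear independence of $\{1,F_1,\dots,F_n\}$; hence $h_{ij}$ is positive definite, so $\psi$ is strictly convex on $\mathbb{R}^{n}$ and its gradient $\theta\mapsto\nabla\psi=\eta$ is a diffeomorphism onto an open set, supplying a global coordinate system. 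To see these coordinates are $\nabla^{(m)}$-affine, I would first note that $\theta$ is $\nabla^{(e)}$-affine: since $\partial_i\partial_j\ln p=-h_{ij}(\theta)$ is independent of $x$, the $\alpha=1$ Christoffel formula gives $\Gamma^{(e)}_{ij,k}=-h_{ij}\,\mathbb{E}_{p_\theta}[\partial_k\ln p]=0$. Then I would exploit duality. Writing $\partial/\partial\eta_k=\sum_l h^{kl}\partial/\partial\theta_l$ shows $h_F(\partial/\partial\theta_j,\partial/\partial\eta_k)=\delta_{jk}$; applying $X=\partial/\partial\eta_i$ in the defining relation $X\,h_F(Y,Z)=h_F(\nabla^{(e)}_XY,Z)+h_F(Y,\nabla^{(m)}_XZ)$ with $Y=\partial/\partial\theta_j$, $Z=\partial/\partial\eta_k$, the left side vanishes (it differentiates a constant), and the first term on the right vanishes because $\nabla^{(e)}_{\partial/\partial\eta_i}\partial/\partial\theta_j=\sum_l h^{il}\nabla^{(e)}_{\partial/\partial\theta_l}\partial/\partial\theta_j=0$. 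Thus $h_F(\partial/\partial\theta_j,\nabla^{(m)}_{\partial/\partial\eta_i}\partial/\partial\eta_k)=0$ for every $j$, and non-degeneracy of $h_F$ forces $\nabla^{(m)}_{\partial/\partial\eta_i}\partial/\partial\eta_k=0$, which is (ii).

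The main obstacle is part (ii): parts (i), (iii), (iv), (v) are immediate from finiteness and differentiation, whereas (ii) needs both the strict convexity of $\psi$ (for global invertibility) and the dual-connection bookkeeping above. The delicate point is keeping track of which connection acts on which coordinate frame, since $\nabla^{(m)}$ is visibly \emph{not} flat in the $\theta$-frame — there its Christoffel symbols are the generally nonzero third central moments $\mathbb{E}_{p_\theta}[(F_i-\eta_i)(F_j-\eta_j)(F_k-\eta_k)]$ — and it is precisely the duality identity that converts the flatness of $\nabla^{(e)}$ in the $\theta$-coordinates into the flatness of $\nabla^{(m)}$ in the $\eta$-coordinates.
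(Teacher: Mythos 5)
Your proof is correct. Note, however, that the paper itself offers no argument for this proposition: its ``proof'' is a citation to Amari--Nagaoka, so there is nothing internal to compare against; what you have written is essentially the standard derivation from that reference, made self-contained. Parts (v), (i), (iii), (iv) are exactly as they should be: the normalization identity over the finite set $\Omega$ gives the closed form of $\psi$, finiteness gives smoothness on all of $\mathbb{R}^{n}$, and the two differentiations produce $\eta_i$ and the covariance $\mathbb{E}_{p_\theta}[(F_i-\eta_i)(F_j-\eta_j)]$, which matches the Fisher metric via $\partial_i\ln p = F_i-\eta_i$. For (ii), your chain of reasoning is the classical Legendre-duality argument: positive definiteness of the Hessian (using that a random variable on a finite set with full support has zero variance only if it is constant, contradicting the independence of $\{1,F_1,\dots,F_n\}$) makes $\eta=\nabla\psi$ a global chart; $\Gamma^{(e)}_{ij,k}=0$ in the $\theta$-frame because $\partial_i\partial_j\ln p=-h_{ij}$ is deterministic and the score has zero mean; and the identity $h_F(\partial/\partial\theta_j,\partial/\partial\eta_k)=\delta_{jk}$ combined with the duality relation of Definition \ref{def:3.6} transfers flatness to the $\eta$-frame for $\nabla^{(m)}$. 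Your closing remark that $\nabla^{(m)}$ has nonvanishing Christoffel symbols (the third central moments) in the $\theta$-frame correctly identifies why the duality step is not optional. The one point worth stating explicitly rather than asserting is the global injectivity of $\nabla\psi$: it follows from strict convexity via $\langle\nabla\psi(\theta)-\nabla\psi(\theta'),\theta-\theta'\rangle>0$ for $\theta\neq\theta'$, which you implicitly use when you call the gradient map a diffeomorphism onto an open set.
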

\begin{proof}
	See \cite{Amari-Nagaoka}. 
\end{proof}

\section{Constant scalar curvature}\label{fnekwndkefnknk}

	Let $\Omega=\{x_{0},x_{1},...,x_{m}\}$ be a finite set endowed with the counting measure $dx$. 
	Let $\mathcal{E}$ be a 1-dimensional exponential family defined over $(\Omega,dx)$, with elements of the form 
	\begin{eqnarray}\label{eq:6.1}
		p(x;\theta)=\exp\bigl\{C(x)+\theta F(x)-\psi(\theta)\bigr\},
	\end{eqnarray}
	where $C,F:\Omega\to\mathbb{R}$ are functions, $\theta\in\mathbb{R}$ and $\psi:\mathbb{R}\to\mathbb{R}$ is a function. 
	We denote by $\eta:\mathcal{E}\to\mathbb{R}$ the expectation parameter.

	We will use the following notations:
	\begin{itemize}
		\item $F_{i}:= F(x_{i})$,\quad $C_{i}:= C(x_{i})$,\quad $i=0,...,m$,
		\item $F_{min}:= min\{ F_{0},...,F_{m}\}$, \quad $F_{max}:= max\{ F_{0},...,F_{m}\}$,
		\item $I:=\{0,1,...,m\}$,
		\item $I_{min}:=\{ k\in I\:|\: F_{k}=F_{min}\}$,
		\item $I_{max}:=\{ k\in I\:|\: F_{k}=F_{max}\}$.
	\end{itemize}
	Note that $F_{min}\neq F_{max}$ (since the functions $1$ and $F$ are assumed to be linearly independent). 
	Note also that $I_{min}\neq\emptyset$ and $I_{max}\neq\emptyset$.

\begin{lemma}\label{lem:6.1}
	We have
	\begin{eqnarray*}
		\underset{\theta\to-\infty}{\textup{lim}}\eta\bigl(\theta\bigr)= F_{min} 
		\qquad\textup{and}\qquad\underset{\theta\to+\infty}{\textup{lim}}\eta\bigl(\theta\bigr)= F_{max}.
	\end{eqnarray*}
	In particular, $\eta$ is a bounded function.
\end{lemma}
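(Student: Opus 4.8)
The plan is to work directly from the explicit formula for the expectation parameter. By Proposition \ref{prop:5.12}(v), $\psi(\theta)=\ln\big\{\sum_{k=0}^{m}\exp(C_{k}+\theta F_{k})\big\}$, so that
\begin{eqnarray*}
	p(x_{k};\theta)=\frac{\exp(C_{k}+\theta F_{k})}{\sum_{j=0}^{m}\exp(C_{j}+\theta F_{j})},
\end{eqnarray*}
and therefore
\begin{eqnarray*}
	\eta(\theta)=\sum_{k=0}^{m}F_{k}\,p(x_{k};\theta)=\frac{\sum_{k=0}^{m}F_{k}\exp(C_{k}+\theta F_{k})}{\sum_{j=0}^{m}\exp(C_{j}+\theta F_{j})}.
\end{eqnarray*}

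First I would dispose of the boundedness claim: since each $p(x_{k};\theta)>0$ and $\sum_{k=0}^{m}p(x_{k};\theta)=1$, the quantity $\eta(\theta)$ is a convex combination of the values $F_{0},\dots,F_{m}$, hence $F_{min}\leq\eta(\theta)\leq F_{max}$ for every $\theta$. This already shows that $\eta$ is bounded, and it localizes both limits to the closed interval $[F_{min},F_{max}]$.

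Next, to evaluate $\lim_{\theta\to+\infty}\eta(\theta)$, I would factor $\exp(\theta F_{max})$ out of both numerator and denominator, rewriting
\begin{eqnarray*}
	\eta(\theta)=\frac{\sum_{k=0}^{m}F_{k}\exp\big(C_{k}+\theta(F_{k}-F_{max})\big)}{\sum_{j=0}^{m}\exp\big(C_{j}+\theta(F_{j}-F_{max})\big)}.
\end{eqnarray*}
For $k\notin I_{max}$ one has $F_{k}-F_{max}<0$, so the corresponding exponentials tend to $0$ as $\theta\to+\infty$, whereas for $k\in I_{max}$ the exponent $\theta(F_{k}-F_{max})$ vanishes identically. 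Passing to the limit therefore leaves only the $I_{max}$-terms, giving
\begin{eqnarray*}
	\lim_{\theta\to+\infty}\eta(\theta)=\frac{F_{max}\sum_{k\in I_{max}}\exp(C_{k})}{\sum_{j\in I_{max}}\exp(C_{j})}=F_{max},
\end{eqnarray*}
where I used $F_{k}=F_{max}$ for every $k\in I_{max}$ together with $I_{max}\neq\emptyset$. The limit $\theta\to-\infty$ is entirely symmetric: factoring out $\exp(\theta F_{min})$ instead, the terms with $F_{k}>F_{min}$ now decay (because $\theta\to-\infty$ and $F_{k}-F_{min}>0$), leaving only the $I_{min}$-terms and yielding $F_{min}$.

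There is no serious obstacle here; the only point requiring care is choosing the correct dominant exponential to factor out in each limit, so that the surviving index set is exactly $I_{max}$ (respectively $I_{min}$) and that this set is nonempty, which was recorded just before the statement.
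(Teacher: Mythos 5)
Your proof is correct and follows essentially the same route as the paper: both compute $\eta(\theta)$ as the explicit ratio of exponential sums and then factor out the dominant exponential ($e^{\theta F_{min}}$ as $\theta\to-\infty$, $e^{\theta F_{max}}$ as $\theta\to+\infty$) so that only the $I_{min}$- (resp.\ $I_{max}$-) terms survive in the limit. Your preliminary observation that $\eta(\theta)$ is a convex combination of the $F_{k}$, giving boundedness immediately, is a small pleasant addition but does not change the argument.
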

\begin{proof}
	By Proposition \ref{prop:5.12}, (iii) and (iv), we have 
	\begin{eqnarray*}
	\eta\bigl(\theta\bigr) &=& \dfrac{\partial\psi}{\partial\theta}=
	\dfrac{\partial}{\partial\theta}\biggl(\ln\biggl\{\sum_{k=0}^{m}\exp\biggl\{ C\bigl(x_{k}\bigr)
		+\theta F\bigl(x_{k}\bigr)\biggr\}\biggr\}\biggr)\\
	&=& \sum_{k=0}^{m}F_{k}\frac{e^{C_{k}+\theta F_{k}}}{e^{C_{0}+\theta F_{0}}+\cdots+e^{C_{m}+\theta F_{m}}}.
	\end{eqnarray*}
	Multiplying the numerator and denominator by $e^{-\theta F_{min}}$ yields 
	\begin{eqnarray*}
		\eta\bigl(\theta\bigr) &=& \sum_{k=0}^{m}F_{k}\dfrac{e^{C_{k}+\theta(F_{k}-F_{min})}}{e^{C_{0}
		+\theta(F_{0}-F_{min})}+\cdots+e^{C_{m}+\theta(F_{m}-F_{min})}}\\
		&=& \sum_{k\in I_{min}}F_{min}\dfrac{e^{C_{k}}}{\underset{i\in I_{min}}{\sum}e^{C_{i}}
		+\underset{i\in I_{min}^{c}}{\sum}e^{C_{i}+\theta(F_{i}-F_{min})}}\\
	 	&& +\sum_{k\in I_{min}^{c}}F_{k}\dfrac{e^{C_{k}+\theta(F_{k}-F_{min})}}{\underset{i\in I_{min}}{\sum}e^{C_{i}}
		+\underset{i\in I_{min}^{c}}{\sum}e^{C_{i}+\theta(F_{i}-F_{min})}},
	\end{eqnarray*}
	where $I_{min}^{c}=I-I_{min}$. If $k\in I_{min}^{c}$, then $F_{k}-F_{min}>0$ and so, 
	\begin{eqnarray*}
		\underset{\theta\to -\infty}{\textup{lim}}e^{C_{k}+\theta(F_{k}-F_{min})}=0.
	\end{eqnarray*}
	Thus, 
	\begin{eqnarray*}
	\underset{\theta\to -\infty}{\textup{lim}}\eta\bigl(\theta\bigr) 
	= \sum_{k\;\in\; I_{min}}F_{min}\frac{e^{C_{k}}}{\underset{i\;\in\; I_{min}}{\sum}e^{C_{i}}} 
	= F_{min}\dfrac{\underset{k\;\in\; I_{min}}{\sum}e^{C_{k}}}{\underset{i\;\in\; I_{min}}{\sum}e^{C_{i}}} = F_{min}.
	\end{eqnarray*}

	Analogously, 
	\begin{eqnarray*}
		\underset{\theta\to +\infty}{\textup{lim}}\eta\bigl(\theta\bigr)=F_{max}.
	\end{eqnarray*}
	The lemma follows.
\end{proof}

	Let $(g,J,\omega)$ be the K\"{a}hler structure on $T\mathcal{E}$ associated to $\bigl(h_{F},\nabla^{(e)}\bigr)$ 
	via Dombrowski's construction. We denote by $\textup{Scal}: T\mathcal{E}\to\mathbb{R}$ the corresponding scalar curvature.

\begin{proposition}\label{prop:6.2} 
	Suppose the scalar curvature of $T\mathcal{E}$ is constant and equal to $\lambda\in\mathbb{R}$. 
	Then $\lambda\neq0$ and there exist $a,b,r,s\in\mathbb{R}$, with $a\neq b$, such that
	\begin{eqnarray*}
		\psi\bigl(\theta\bigr)=\frac{2}{\lambda}\ln\bigl\{ e^{a\theta+r}+e^{b\theta+s}\bigr\}
	\end{eqnarray*}
	for all $\theta\in\mathbb{R}$. Consequently, the coordinate expression for the Fisher metric 
	with respect to $\theta$ is 
	\begin{eqnarray*}
		 h_{F}(\theta)=\dfrac{\partial^{2}\psi}{\partial \theta^{2}}=
		 \dfrac{(a-b)^{2}}{2\lambda\,\textup{cosh}^{2}\big(\tfrac{a-b}{2}\theta+\tfrac{r-s}{2}\big)},
	\end{eqnarray*}
	where $\textup{cosh}(x)=\tfrac{e^{x}+e^{-x}}{2}$ is the hyperbolic cosine function.
\end{proposition}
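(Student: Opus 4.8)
The plan is to reduce the constancy of $\textup{Scal}$ to a single ordinary differential equation for $\psi$ and then integrate it explicitly. Since $T\mathcal{E}$ carries the K\"{a}hler structure associated to $(h_F,\nabla^{(e)})$ and the natural parameter $\theta$ is an affine coordinate for $\nabla^{(e)}$ (dually to Proposition \ref{prop:5.12}(ii)), Corollary \ref{cor:4.3} applies with $n=1$. There $d$ is the $1\times 1$ determinant $h_F$ and $h^{11}=1/h_F$, so the Hessian scalar curvature is $S=-\tfrac{1}{h_F}(\ln h_F)''$, where $\,'=d/d\theta$. By Proposition \ref{prop:5.12}(iv) we have $h_F=\psi''$, hence $\textup{Scal}\equiv\lambda$ is equivalent to the equation
\[
(\ln\psi'')''=-\lambda\,\psi'' .
\]
Write $f:=\psi''=h_F$. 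By Proposition \ref{prop:5.12}(i) the function $f$ is defined and smooth on all of $\mathbb{R}$, it is positive since $f=\eta'$ with $\eta$ strictly increasing, and it is integrable: by Lemma \ref{lem:6.1} and Proposition \ref{prop:5.12}(iii), $\int_{\mathbb{R}}f=\eta(+\infty)-\eta(-\infty)=F_{max}-F_{min}<\infty$.

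First I would fix the sign of $\lambda$. If $\lambda\leq 0$, then $(\ln f)''=-\lambda f\geq 0$, so $\ln f$ is convex on $\mathbb{R}$. A finite convex function lies above one of its supporting affine lines $\ell(\theta)=c+d\theta$, so $f=e^{\ln f}\geq e^{c+d\theta}$; but $e^{c+d\theta}$ is never integrable over all of $\mathbb{R}$, contradicting the integrability of $f$. This excludes $\lambda\leq 0$ in one stroke, giving $\lambda>0$ (in particular $\lambda\neq 0$).

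Next I would integrate the equation. Setting $g:=\ln f$, it becomes the Liouville-type equation $g''=-\lambda e^{g}$; multiplying by $g'$ and integrating once yields the first integral $\tfrac12(g')^2=c_1-\lambda f$ for some constant $c_1$. Since $(g')^2\geq 0$ and $\lambda f>0$, necessarily $c_1>0$; write $c_1=\mu^2/2$ with $\mu>0$. One checks directly that $f_0(\theta):=\tfrac{\mu^2}{2\lambda}\,\cosh^{-2}\!\big(\tfrac{\mu}{2}\theta+c\big)$ satisfies both the first integral and the original equation, and the family $\{f_0\}$, with parameters $\mu>0$ and $c\in\mathbb{R}$, has exactly the freedom needed to match any admissible data $(f(\theta_1),f'(\theta_1))$ with $c_1>0$ at a single point (the peak-straddling gives both possible slope signs). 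Hence uniqueness for the second-order initial value problem forces $f=f_0$. Integrating $\psi''=f$ twice then gives $\psi(\theta)=\tfrac{2}{\lambda}\ln\cosh\!\big(\tfrac{\mu}{2}\theta+c\big)+C\theta+D$ for constants $C,D$.

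Finally I would recast this in the asserted form and read off the metric. From the factorization $e^{a\theta+r}+e^{b\theta+s}=e^{\frac{(a+b)\theta+r+s}{2}}\cdot 2\cosh\!\big(\tfrac{a-b}{2}\theta+\tfrac{r-s}{2}\big)$ one sees that $\tfrac{2}{\lambda}\ln\{e^{a\theta+r}+e^{b\theta+s}\}$ equals $\tfrac{2}{\lambda}\ln\cosh\!\big(\tfrac{a-b}{2}\theta+\tfrac{r-s}{2}\big)$ plus an affine function of $\theta$; matching $a-b=\mu\neq 0$, $\tfrac{r-s}{2}=c$, and the remaining affine data to $C,D$ exhibits $\psi$ in the stated form with $a\neq b$. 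Differentiating, $\psi''=\tfrac{2}{\lambda}\,\frac{(a-b)^2\,e^{a\theta+r}e^{b\theta+s}}{(e^{a\theta+r}+e^{b\theta+s})^2}$, and the identity $\frac{AB}{(A+B)^2}=\frac{1}{4\cosh^2(\frac12\ln(A/B))}$ with $A=e^{a\theta+r}$, $B=e^{b\theta+s}$ turns this into $\tfrac{(a-b)^2}{2\lambda\,\cosh^2(\frac{a-b}{2}\theta+\frac{r-s}{2})}$, as claimed. The main obstacle is the third step: solving the nonlinear equation and, crucially, pinning down the integration constant so as to select precisely the globally defined positive (hence $\cosh^{-2}$) branch. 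This is exactly where the global hypotheses enter, since the positivity and integrability of $f=\psi''$ furnished by Lemma \ref{lem:6.1} are what exclude both the convex and the blow-up branches, force $c_1>0$, and already force $\lambda>0$.
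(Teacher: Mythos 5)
Your proposal is correct, and while it starts from the same place as the paper --- Corollary \ref{cor:4.3} with $n=1$ reduces everything to the single equation $(\ln h_F)''=-\lambda h_F$ --- it integrates that equation by a genuinely different route. The paper substitutes $h_F=\partial\eta/\partial\theta$ and integrates twice to arrive at the Riccati equation $\eta'=-\tfrac{\lambda}{2}\eta^{2}+a\eta+b$, which it then solves by separation of variables with a three-way case analysis on the discriminant of the quadratic, using the boundedness of $\eta$ from Lemma \ref{lem:6.1} (and the impossibility of $\tfrac{1}{\eta+\alpha}=0$, resp.\ $\alpha=\beta$) to kill every branch except the logistic one; positivity of $\lambda$ is only extracted afterwards, in the corollary, from the resulting formula for $h_F$. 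You instead keep $f=h_F=\psi''$ as the unknown, treat $(\ln f)''=-\lambda f$ as a Liouville equation, and exploit the \emph{integrability} $\int_{\mathbb{R}}f=F_{max}-F_{min}<\infty$ (an equivalent repackaging of Lemma \ref{lem:6.1}) together with convexity of $\ln f$ to dispose of $\lambda\leq 0$ in one stroke; then the energy first integral $\tfrac12(g')^{2}=c_1-\lambda f$ plus matching against the explicit $\cosh^{-2}$ family and ODE uniqueness replaces the paper's partial-fraction computation. What your approach buys is a cleaner and earlier proof that $\lambda>0$ and the avoidance of the discriminant casework; what it costs is the reliance on an existence-and-uniqueness argument (you must check that the two-parameter family $(\mu,c)$ realizes every admissible initial condition, which you do correctly via $\mu^{2}=(f'/f)^{2}+2\lambda f\geq 2\lambda f$ and the sign of $\tanh$), whereas the paper's separation of variables produces the solution directly. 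Both arguments are complete; the only cosmetic remark is that the statement as printed asserts only $\lambda\neq 0$, so your stronger conclusion $\lambda>0$ anticipates the corollary that follows it in the paper.
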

\begin{proof}
	By Corollary \ref{cor:4.3}, 
	\begin{eqnarray*}
		\textup{Scal}=-h_F\bigl(\theta\bigr){}^{-1}\frac{\partial^{2}}{\partial\theta^{2}}
		\bigl(\ln\bigl(h_F\bigl(\theta\bigr)\bigr)\bigr),
	\end{eqnarray*}
	where $h_F(\theta):= h_F\bigl(\frac{\partial}{\partial\theta},\frac{\partial}{\partial\theta}\bigr)$, 
	and so, 
	\begin{eqnarray*}
	\textup{Scal}=\lambda &\Leftrightarrow & -h_F\bigl(\theta\bigr){}^{-1}
		\frac{\partial^{2}}{\partial\theta^{2}}\bigl(\ln\bigl(h_F\bigl(\theta\bigr)\bigr)\bigr)=\lambda\\
		&\Leftrightarrow & \frac{\partial^{2}}{\partial\theta^{2}}\bigl(\ln\bigl(h_F\bigl(\theta\bigr)\bigr)\bigr)
			=-\lambda h_F\bigl(\theta\bigr)\\
		&\Leftrightarrow & \frac{\partial^{2}}{\partial\theta^{2}}
			\Bigl(\ln\Bigl(\frac{\partial\eta}{\partial\theta}\Bigr)\Bigr)
			=\frac{\partial}{\partial\theta}\bigl(-\lambda\eta\bigr),
	\end{eqnarray*}
	where we have used $h_{F}=\tfrac{\partial \eta}{\partial \theta}$ (see Proposition \ref{prop:5.12}). 
	Integrating we obtain 
	\begin{eqnarray*}
		\frac{{\displaystyle \partial}}{\partial\theta}\Bigl(\ln\Bigl(\frac{\partial\eta}{\partial\theta}\Bigr)\Bigr)
		=-\lambda\eta+c_{1},\qquad c_{1}\in\mathbb{R},
	\end{eqnarray*}
	which is equivalent to 
\begin{eqnarray*}
	 \dfrac{ \dfrac{\partial^{2}\eta}{\partial\theta^{2}}}{ \dfrac{\partial\eta}{\partial\theta}} 
		=-\lambda\eta+c_{1} 
	 &\Leftrightarrow& \dfrac{\partial^{2}\eta}{\partial\theta^{2}} 
		=\dfrac{\partial}{\partial\theta}\bigl(-\frac{\lambda}{2}\eta^{2}+c_{1}\eta+c_{2}\bigr)\\
 	 &\Leftrightarrow& \dfrac{\partial\eta}{\partial\theta}=-\frac{\lambda}{2}\eta^{2}+c_{1}\eta +c_{2}+c_{3},
\end{eqnarray*}
	where $c_{2},c_{3}\in\mathbb{R}$. We conclude that $\textup{Scal}\equiv\lambda$ 
	if and only if there exist $a,b\in\mathbb{R}$ such that
\begin{eqnarray}\label{eq:6.2}
	\frac{\partial\eta}{\partial\theta}=-\frac{\lambda}{2}\eta^{2}+a\eta+b.
\end{eqnarray}
	Because ${\displaystyle \tfrac{\partial\eta}{\partial\theta}=h_F\bigl(\theta\bigr)>0}$, 
	Equation \eqref{eq:6.2} implies that
\begin{eqnarray}\label{eq:6.3}
	-\frac{\lambda}{2}\eta\bigl(\theta\bigr)^{2}+a\eta\bigl(\theta\bigr)+b>0.
\end{eqnarray}
	Therefore we can divide both sides of \eqref{eq:6.2} by \eqref{eq:6.3}. This yields
	\begin{eqnarray}\label{eq:6.4}
	\dfrac{\dfrac{\partial\eta}{\partial\theta}}{-\frac{\lambda}{2}\eta\bigl(\theta\bigr)^{2}+a\eta\bigl(\theta\bigr)+b}
		=1.
	\end{eqnarray}
	Hence it all boils down to integrate the function 
	\begin{eqnarray*}
		\dfrac{1}{-\frac{1}{2}\lambda x^{2}+ax+b},\qquad x\in\mathbb{R}.
	\end{eqnarray*}
	Let $\triangle=a^{2}-4b(-\tfrac{1}{2}\lambda)=a^{2}+2b \lambda$ 
	be the discriminant of the polynomial $ -\tfrac{1}{2}\lambda x^{2}+ax+b$. We will consider 3 cases.\\

	Case 1: $\triangle<0$. Integration of \eqref{eq:6.4} yields:
	\begin{eqnarray*}
		\dfrac{2}{\sqrt{-\triangle}}\arctan\biggl(\dfrac{-\lambda\eta+a}{\sqrt{-\triangle}}\biggr)=
		\theta+c, \quad c\in\mathbb{R}.
	\end{eqnarray*}
	The left hand side of this equation is a bounded function, whereas the right hand side is not. 
	Thus this case is not possible.

	Case 2: $\triangle=0$. First, suppose that $\lambda=0$. Then the condition 
	$\triangle=0$ implies that $a=0$, which also implies by \eqref{eq:6.3} that $b>0$. 
	On the other hand, it follows from \eqref{eq:6.2} that $\eta\bigl(\theta\bigr)=b\theta+c$. 
	By Lemma \ref{lem:6.1}, the function $\eta$ is bounded, whereas the function $b\theta+c$ is not (since $b\neq0$). 
	It follows that $\lambda=0$ is impossible.

	Now suppose that $\lambda\neq0$. In this case, there exists $\alpha\in\mathbb{R}$ such that
	\begin{eqnarray*}
		-\dfrac{\lambda}{2}\eta^{2}+a\eta+b=-\dfrac{\lambda}{2}\bigl(\eta+\alpha\bigr)^{2}.
	\end{eqnarray*} 
	Integrating \eqref{eq:6.4} we obtain
	\begin{eqnarray}\label{eq:6.19}
		\dfrac{2}{\lambda}\dfrac{1}{\eta+\alpha}=\theta+c,\quad c\in\mathbb{R}.
	\end{eqnarray}
	Putting $\theta=-c$ in \eqref{eq:6.19} yields $\tfrac{1}{\eta(-c)+\alpha}=0$, which is not possible. 

	Case 3: $\triangle>0$. Suppose first that $\lambda=0$ (in particular, this implies $a\neq0$). 
	Then, by \eqref{eq:6.4},
	\begin{eqnarray*}
		\dfrac{\dfrac{\partial\eta}{\partial\theta}}{a\eta(\theta)+b}=1,
	\end{eqnarray*}
	and so, there exists $c\in \mathbb{R}-\{0\}$ such that 
	\begin{eqnarray*}
		 \eta(\theta)=\dfrac{1}{a}(ce^{a\theta}-b),
	\end{eqnarray*}
	which is not possible, since $\eta$ is bounded.

	Suppose that $\lambda\neq0$. In this case, there exist $\alpha,\beta\in\mathbb{R}$ with 
	$\alpha<\beta$, such that
	\begin{eqnarray*}
		-\frac{\lambda}{2}\eta{}^{2}+a\eta+b=-\frac{\lambda}{2}\bigl(\eta-\alpha\bigr)\bigl(\eta-\beta\bigr).
	\end{eqnarray*}
	Then, integration of \eqref{eq:6.4} yields
	\begin{eqnarray*}
		-\frac{2}{\lambda}\frac{1}{\alpha-\beta}\ln\biggl|\frac{\eta-\alpha}{\eta-\beta}\biggr|=\theta+c,\quad c\in\mathbb{R},
	\end{eqnarray*}
	and so
	\begin{eqnarray}
		\biggl|\frac{\eta-\alpha}{\eta-\beta}\biggr|=e^{\frac{\lambda}{2}(\theta+c)(\beta-\alpha)}.\label{eq:6.5}
	\end{eqnarray}
	It follows from \eqref{eq:6.5} that $\alpha,\beta\notin\textup{Im}\bigl(\eta\bigr)$. 
	Therefore we have the following possibilities: 

	(i) $\textup{Im}\bigl(\eta\bigr)\subseteq\bigl(-\infty,\alpha\bigr)\cup\bigl(\beta,+\infty\bigr)$.
	
	\noindent In this case, \eqref{eq:6.5} becomes
	\begin{eqnarray*}
		\dfrac{\eta-\alpha}{\eta-\beta} =e^{\tfrac{\lambda}{2}(\theta+c)(\beta-\alpha)}\,\,\,\,\,\,
		\Leftrightarrow\,\,\,\,\,\,
		\eta-\alpha=\bigl(\eta-\beta\bigr)e^{\tfrac{\lambda}{2}(\theta+c)(\beta-\alpha)}.
	\end{eqnarray*}
	Putting $\theta=-c$ we obtain $\alpha=\beta$ which is a contradiction.
	
	(ii) $\textup{Im}(\eta)\subseteq (\alpha,\beta)$. In this case, 
	\begin{eqnarray*}
		\dfrac{\eta-\alpha}{\beta-\eta}=e^{\tfrac{\lambda}{2}(\theta+c)(\beta-\alpha)},
	\end{eqnarray*}
	from which it follows that 
	\begin{eqnarray*}
		\eta(\theta)=\dfrac{\alpha e^{\tfrac{\lambda}{2}\alpha(\theta+c)}
		+\beta e^{\tfrac{\lambda}{2}\beta(\theta+c)}}{e^{\tfrac{\lambda}{2}\alpha(\theta+c)}
		+e^{\tfrac{\lambda}{2}\beta(\theta+c)}}.
	\end{eqnarray*}
	Integrating again (remember that $\frac{\partial\psi}{\partial\theta}=\eta$) we obtain 
	\begin{eqnarray*} 
		\psi(\theta)=\dfrac{2}{\lambda}\ln\Big( e^{\tfrac{\lambda}{2}\alpha(\theta+c)}
		+e^{\tfrac{\lambda}{2}\beta(\theta+c)}\Big)+\tfrac{2}{\lambda}\ln\bigl(e^{\omega}\bigr),
	\end{eqnarray*}
	where $\tfrac{2}{\lambda}\ln\bigl(e^{\omega}\bigr)$ is a constant. Then,
	\begin{eqnarray*}
		\psi(\theta) &=& \tfrac{2}{\lambda}\ln\big(e^{\tfrac{\lambda}{2}\alpha\theta
			+\tfrac{\lambda}{2}\alpha c}+e^{\tfrac{\lambda}{2}\beta\theta+\tfrac{\lambda}{2}\beta c)}\big)
			+\tfrac{2}{\lambda}\ln\bigl(e^{\omega}\bigr)\\
		 &=& \tfrac{2}{\lambda}\ln\big(\bigl(e^{\tfrac{\lambda}{2}\alpha\theta+\tfrac{\lambda}{2}\alpha c}+
			e^{\tfrac{\lambda}{2}\beta\theta+\tfrac{\lambda}{2}\beta c}\bigr)e^{\omega}\big)\\
 		&=& \tfrac{2}{\lambda}\ln\big(\bigl(e^{\tfrac{\lambda}{2}\alpha\theta+\tfrac{\lambda}{2}\alpha c
			+\omega}+e^{\tfrac{\lambda}{2}\beta\theta+\tfrac{\lambda}{2}\beta c+\omega}\bigr)\big).
	\end{eqnarray*}
	Letting  $a:=\frac{\lambda}{2}\alpha$, $b:=\frac{\lambda}{2}\beta$,
	$r:=\frac{\lambda}{2}\alpha c+\omega$ and $s:=\frac{\lambda}{2}\beta c+\omega$,
	yields the desired result.
\end{proof}
\begin{corollary}
	If the scalar curvature of $T\mathcal{E}$ is constant and equal to $\lambda\in\mathbb{R}$, then $\lambda>0$. 
\end{corollary}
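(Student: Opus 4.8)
The plan is to read off the sign of $\lambda$ directly from the explicit formula for the Fisher metric obtained in Proposition \ref{prop:6.2}. Recall that that proposition already does the hard work: assuming $\textup{Scal}\equiv\lambda$, it establishes both that $\lambda\neq 0$ and that
\begin{eqnarray*}
	h_{F}(\theta)=\dfrac{(a-b)^{2}}{2\lambda\,\textup{cosh}^{2}\big(\tfrac{a-b}{2}\theta+\tfrac{r-s}{2}\big)}
\end{eqnarray*}
for all $\theta\in\mathbb{R}$, with $a\neq b$. So the corollary should reduce to a sign inspection, with no genuine obstacle beyond bookkeeping.

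First I would invoke the fact that $h_{F}$ is a Riemannian metric, hence positive definite. In the one-dimensional setting this says precisely that $h_{F}(\theta)=h_{F}\bigl(\tfrac{\partial}{\partial\theta},\tfrac{\partial}{\partial\theta}\bigr)>0$ for every $\theta\in\mathbb{R}$ (this is also visible from $h_{F}=\tfrac{\partial\eta}{\partial\theta}$ and the strict monotonicity of $\eta$, or from Proposition \ref{prop:5.12}(iv)). Next I would examine the right-hand side of the formula above: the factor $(a-b)^{2}$ is strictly positive because $a\neq b$, and $\textup{cosh}^{2}$ is strictly positive everywhere. Therefore the sign of $h_{F}(\theta)$ agrees with the sign of $\tfrac{1}{\lambda}$. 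Positivity of $h_{F}$ then forces $\tfrac{1}{\lambda}>0$, i.e. $\lambda>0$. The only point requiring care is that division by $\lambda$ is legitimate, but this is guaranteed since Proposition \ref{prop:6.2} has already ruled out $\lambda=0$.

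As an alternative argument that bypasses the closed form for $h_{F}$, I would return to the governing equation \eqref{eq:6.2} within the surviving case (Case 3, subcase (ii) of the proof of Proposition \ref{prop:6.2}), where $\textup{Im}(\eta)\subseteq(\alpha,\beta)$ with $\alpha<\beta$. There one has $\eta-\alpha>0$ and $\eta-\beta<0$, so $(\eta-\alpha)(\eta-\beta)<0$, while
\begin{eqnarray*}
	h_{F}=\frac{\partial\eta}{\partial\theta}=-\frac{\lambda}{2}\bigl(\eta-\alpha\bigr)\bigl(\eta-\beta\bigr)>0.
\end{eqnarray*}
A product of a negative quantity $-\tfrac{\lambda}{2}$-candidate and the negative quantity $(\eta-\alpha)(\eta-\beta)$ can only be positive if $-\tfrac{\lambda}{2}<0$, which again yields $\lambda>0$. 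Either route is short; I expect the entire proof to be a couple of lines, the substantive content having been absorbed into Proposition \ref{prop:6.2}.
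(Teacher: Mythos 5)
Your proposal is correct and its main argument is exactly the paper's proof: positivity of the Riemannian metric $h_{F}$ combined with the explicit formula $h_{F}(\theta)=\tfrac{(a-b)^{2}}{2\lambda\,\cosh^{2}(\cdot)}$ from Proposition \ref{prop:6.2} forces $\lambda>0$. The alternative sign argument via Case 3(ii) is also valid but unnecessary.
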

\begin{proof}
	This follows immediately from the formula 
	$h_{F}(\theta)= \tfrac{(a-b)^{2}}{2\lambda\,\textup{cosh}^{2}\big(\tfrac{a-b}{2}\theta+\tfrac{r-s}{2}\big)}$
	and the fact that $h_{F}(\theta)>0$ for all $\theta\in \mathbb{R}$ (since $h_{F}$ is a metric). 
\end{proof}
\begin{remark}\label{nfekwdkefnknk}
	If $\psi(\theta)=\tfrac{2}{\lambda}\ln\bigl\{ e^{a\theta+r}
	+e^{b\theta+s}\bigr\}$, with $a<b$, then $a=\tfrac{\lambda}{2}F_{min}$ and $b=\tfrac{\lambda}{2}F_{max}$. 
	To see this, it suffices to compute $\textup{lim}_{\theta\to \pm\infty}\,\eta(\theta)=
	\textup{lim}_{\theta\to \pm \infty}\tfrac{\partial \psi}{\partial \theta}$ and to compare with Lemma \ref{lem:6.1}. 
\end{remark}

\begin{lemma}\label{lem:6.5} 
	Let $\alpha$ be a nonzero real number and $f:\mathbb{R}\to \mathbb{R},$ $x\mapsto (1+e^{x})^{\alpha}$. 
	Given $k\in \mathbb{N}=\{0,1,...\}$, let $\mathcal{A}_{k}$ be the linear subspace of $C^{\infty}(\mathbb{R})$ 
	spanned by $f$, $f'$,...,$f^{(k)}$ (derivatives of $f$), that is, 
	\begin{eqnarray*}
		\mathcal{A}_{k}=\textup{Span}\bigl\{f,f',...,f^{(k)}\bigr\}.
	\end{eqnarray*}
	If $\alpha\notin\mathbb{N}$, then $\textup{dim}\mathcal{A}_{k}\geq k+1$. 
\end{lemma}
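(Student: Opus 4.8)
The plan is to prove that the $k+1$ functions $f,f',\dots,f^{(k)}$ are linearly independent. As they span $\mathcal{A}_k$ by definition, this immediately yields $\dim\mathcal{A}_k=k+1$, and in particular the desired bound $\dim\mathcal{A}_k\geq k+1$.

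I would first pin down the shape of the successive derivatives. Writing $t=e^x$, so that $\tfrac{d}{dx}=t\tfrac{d}{dt}$, a straightforward induction shows that for each $j\geq 0$,
\[
	f^{(j)}(x)=P_j(t)\,(1+t)^{\alpha-j},\qquad t=e^x,
\]
where the $P_j$ are polynomials determined by $P_0=1$ and the recurrence
\[
	P_{j+1}(t)=t\bigl[(1+t)P_j'(t)+(\alpha-j)P_j(t)\bigr].
\]

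The decisive quantity is the value of $P_j$ at $t=-1$. Evaluating the recurrence there annihilates the $(1+t)P_j'$ term and gives $P_{j+1}(-1)=(j-\alpha)P_j(-1)$, whence
\[
	P_j(-1)=\prod_{i=0}^{j-1}(i-\alpha).
\]
This is exactly the point where the hypothesis is used: the product vanishes precisely when $\alpha\in\{0,1,\dots,j-1\}$, so the assumption $\alpha\notin\mathbb{N}$ guarantees $P_j(-1)\neq 0$ for every $j=0,\dots,k$. I expect this to be the heart of the argument — recognizing that $t=-1$ is the point at which the condition $\alpha\notin\mathbb{N}$ manifests itself, through the order of vanishing of the $f^{(j)}$.

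To conclude, suppose $\sum_{j=0}^{k}c_jf^{(j)}=0$ on $\mathbb{R}$. Since $(1+t)^{\alpha-k}$ never vanishes for $t>0$, dividing by it turns the relation into $\sum_{j=0}^{k}c_jP_j(t)(1+t)^{k-j}=0$ for all $t>0$; a polynomial vanishing on $(0,+\infty)$ is identically zero, so this holds for all $t$. Now each summand $Q_j(t):=P_j(t)(1+t)^{k-j}$ vanishes to order exactly $k-j$ at $t=-1$, because $P_j(-1)\neq 0$. These orders $k,k-1,\dots,0$ are pairwise distinct, so the $Q_j$ are linearly independent: if $j^\ast:=\max\{j:c_j\neq 0\}$ and $m:=k-j^\ast$, then dividing by $(1+t)^m$ and setting $t=-1$ leaves only $c_{j^\ast}P_{j^\ast}(-1)\neq 0$, contradicting that the sum vanishes. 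Hence all $c_j=0$, the derivatives $f,f',\dots,f^{(k)}$ are linearly independent, and $\dim\mathcal{A}_k\geq k+1$.
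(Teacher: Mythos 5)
Your proof is correct and complete, but it takes a different route from the paper's (very terse) argument. The paper instead exhibits the explicit family $\phi_{l}(x)=(1+e^{x})^{\alpha-l}e^{lx}$, $0\leq l\leq k$, and asserts that these $k+1$ functions are linearly independent and contained in $\mathcal{A}_{k}$; the containment comes from the triangular relations $f^{(j)}=\sum_{l=0}^{j}b_{j,l}\phi_{l}$ with diagonal entries $b_{j,j}=\alpha(\alpha-1)\cdots(\alpha-j+1)$, which are nonzero exactly because $\alpha\notin\mathbb{N}$, so the change of basis can be inverted. You instead prove directly that the generators $f,f',\dots,f^{(k)}$ admit no nontrivial relation, by passing to $t=e^{x}$, writing $f^{(j)}=P_{j}(t)(1+t)^{\alpha-j}$, and exploiting the order of vanishing at $t=-1$, where the hypothesis enters through $P_{j}(-1)=\prod_{i=0}^{j-1}(i-\alpha)\neq 0$ --- up to sign the same product that drives the paper's argument, so the two proofs hinge on the identical arithmetic obstruction. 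What each buys: the paper's version isolates a clean basis of $\mathcal{A}_{k}$ (essentially the Bernstein-type functions $t^{l}(1+t)^{\alpha-l}$), which is conceptually transparent but leaves both the independence of the $\phi_{l}$ and their membership in $\mathcal{A}_{k}$ as unproved "easy to see" claims; your version is fully self-contained, yields $\dim\mathcal{A}_{k}=k+1$ exactly, and the vanishing-order argument at $t=-1$ is airtight as written. No gaps.
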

\begin{proof}
	It is easy to see that if $\alpha\not\in \mathbb{N}$, then the family of linearly independent functions 
	\begin{eqnarray*}
		 \phi_{l}(x):=(1+e^{x})^{\alpha-l}e^{lx}, \,\,\,\,\,\,\,\,\,\,0\leq l\leq k, 
	\end{eqnarray*}
	is contained in $\mathcal{A}_{k}$. 
\end{proof}

\begin{proposition}\label{prop:6.6}
	Suppose that the scalar curvature of $T\mathcal{E}$ is constant and equal to $\lambda\in\mathbb{R}$. 
	Then there exists a positive integer $d$ such that $\lambda=\tfrac{2}{d}.$
\end{proposition}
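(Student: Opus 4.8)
The plan is to combine the two available descriptions of the cumulant function $\psi$ into a single functional identity, and then to compare the ``size'' of each side under repeated differentiation, with Lemma \ref{lem:6.5} supplying the decisive dimension count. Set $\alpha:=\tfrac{2}{\lambda}$. Since the scalar curvature is constant, Proposition \ref{prop:6.2} gives $\lambda\neq0$ and $\psi(\theta)=\alpha\ln\bigl(e^{a\theta+r}+e^{b\theta+s}\bigr)$ with $a\neq b$, so that exponentiating yields $e^{\psi(\theta)}=\bigl(e^{a\theta+r}+e^{b\theta+s}\bigr)^{\alpha}$. On the other hand, Proposition \ref{prop:5.12}(v) gives $e^{\psi(\theta)}=\sum_{k=0}^{m}e^{C_{k}}e^{\theta F_{k}}$. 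Grouping the terms according to the distinct values $v_{0}<\cdots<v_{p}$ taken by $F$ (so $p\leq m$), I obtain
\[
	\sum_{j=0}^{p}A_{j}e^{v_{j}\theta}=\bigl(e^{a\theta+r}+e^{b\theta+s}\bigr)^{\alpha},\qquad A_{j}>0,
\]
valid for all $\theta\in\mathbb{R}$; call the common function $g$.

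The left-hand side is a linear combination of $p+1$ distinct real exponentials, hence is annihilated by the constant-coefficient operator $\prod_{j=0}^{p}\bigl(\tfrac{d}{d\theta}-v_{j}\bigr)$ of order $p+1$. Consequently $g^{(p+1)}$ lies in $\operatorname{Span}\{g,g',\dots,g^{(p)}\}$, and by induction $\dim\operatorname{Span}\{g,g',\dots,g^{(k)}\}\leq p+1$ for every $k$. I would then put the right-hand side into the normal form of Lemma \ref{lem:6.5}: factoring out $e^{a\theta+r}$ and substituting $x:=(b-a)\theta+(s-r)$ gives $g=K\,e^{\gamma x}(1+e^{x})^{\alpha}$ for suitable constants $K>0$ and $\gamma=\tfrac{\alpha a}{b-a}$.

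Because $x$ is an affine function of $\theta$ with nonzero slope $b-a$, differentiation in $\theta$ and in $x$ produce the same spans; and multiplication by the nowhere-vanishing factor $Ke^{\gamma x}$ is invertible. A Leibniz expansion $e^{-\gamma x}h^{(j)}/K=\sum_{i=0}^{j}\binom{j}{i}\gamma^{j-i}f^{(i)}$ (with $f(x)=(1+e^{x})^{\alpha}$) has an upper-triangular change-of-basis matrix with unit diagonal, hence is invertible, so $\operatorname{Span}\{g,\dots,g^{(k)}\}$ and $\mathcal{A}_{k}=\operatorname{Span}\{f,\dots,f^{(k)}\}$ have equal dimension. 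Therefore $\dim\mathcal{A}_{k}\leq p+1$ for all $k$. I expect this bookkeeping step — verifying that the affine reparametrization and the exponential weight $e^{\gamma x}$ preserve the relevant dimensions — to be the main (though routine) obstacle.

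The contradiction is then immediate from Lemma \ref{lem:6.5}: were $\alpha\notin\mathbb{N}$, we would have $\dim\mathcal{A}_{k}\geq k+1$ for every $k$, which for $k=p+1$ forces $p+2\leq\dim\mathcal{A}_{p+1}\leq p+1$, absurd. Hence $\alpha=\tfrac{2}{\lambda}\in\mathbb{N}$; and since $\lambda>0$ (as established in the corollary just after Proposition \ref{prop:6.2}), $\alpha$ is a positive integer $d$, giving $\lambda=\tfrac{2}{d}$ as claimed.
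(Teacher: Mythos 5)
Your proof is correct and follows essentially the same route as the paper: both equate the two expressions for $e^{\psi}$, deduce that $(1+e^{x})^{\alpha}$ lies in a finite-dimensional space of exponentials closed under differentiation, and invoke Lemma \ref{lem:6.5} to force $\alpha=\tfrac{2}{\lambda}\in\mathbb{N}$. The paper simply multiplies the identity by $e^{-\frac{2}{\lambda}(a\theta+r)}$ at the outset so that the right-hand side becomes exactly $(1+e^{x})^{\alpha}$, which renders your Leibniz/triangular-matrix bookkeeping for the prefactor $e^{\gamma x}$ unnecessary.
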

\begin{proof}
	From Proposition \ref{prop:5.12}, there exist
	$a,b,r,s\in\mathbb{R}$ such that 
	\begin{eqnarray*}
		\psi(\theta)=\frac{2}{\lambda}\ln\bigl\{ e^{a\theta+r}+e^{b\theta+s}\bigr\}
	\end{eqnarray*}
	for all $\theta\in\mathbb{R}$. On the other hand, it follows from Proposition \ref{prop:5.12} that 
	\begin{eqnarray*}
		\psi(\theta)=\ln\biggl\{\sum_{k=0}^{m}e^{C_{k}+\theta F_{k}}\biggr\}.
	\end{eqnarray*}
	Thus 
	\begin{eqnarray*}
		\dfrac{2}{\lambda}\ln\bigl\{ e^{a\theta+r}+e^{b\theta+s}\bigr\}=\ln\biggl\{\sum_{k=0}^{m}e^{C_{k}
		+\theta F_{k}}\biggr\}
	\end{eqnarray*}
	that is, 
	\begin{eqnarray*}
		\bigl(e^{a\theta+r}+e^{b\theta+s}\bigr)^{\frac{2}{\lambda}}=\sum_{k=0}^{m}e^{C_{k}+\theta F_{k}}.
	\end{eqnarray*}
	Multiplying by $e^{-\frac{2}{\lambda}(a\theta+r)}$ we obtain 
	\begin{eqnarray*}
		\bigl(1+e^{x}\bigr)^{\alpha}=\sum_{k=0}^{m}e^{\xi_{k}\theta+\omega_{k}},
	\end{eqnarray*}
	where $\alpha:=\frac{2}{\lambda}>0$, $x:=(b-a)\theta+s-r$, 
	$\xi_{k}:= F_{k}-\frac{2}{\lambda}a$ and $\omega_{k}:= C_{k}-\frac{2}{\lambda}r$. 
	Since $\theta=\tfrac{x-(s-r)}{b-a}$, this can be rewritten as
	\begin{eqnarray}
		\bigl(1+e^{x}\bigr)^{\alpha}=\sum_{k=0}^{m}e^{\xi_{k}^{'}x+\omega_{k}^{'}},\label{eq:6.11}
	\end{eqnarray}
	where $\xi_{k}^{'}:=\frac{\xi_{k}}{b-a}$ and $\omega_{k}^{'}:=-\frac{s-r}{b-a}\xi_{k}+\omega_{k}$.
	Note that \eqref{eq:6.11} holds for all $x\in\mathbb{R}$.

	Consider the linear subspace of $C^{\infty}(\mathbb{R})$ spanned by the functions 
	$e^{\xi_{k}^{'}x+\omega_{k}^{'}}$, $k=0,...,m$, that is,
	\begin{eqnarray*}
		E:= \textup{Span}\Big\{ e^{\xi_{k}^{'}x+\omega_{k}^{'}}\:\Big|\: k=0,...,m\Big\}.
	\end{eqnarray*}
	Observe that $\textup{dim}E\leq m+1$ and that for every $h\in E$, the derivative of $h$ with respect to $x$ belongs to 
	$E$, that is, $\frac{dh}{dx}\in E$.

	Let $f:\mathbb{R}\to\mathbb{R}$ be the function defined by $f(x):=(1+e^{x})^{\alpha}$. Because 
	of \eqref{eq:6.11}, $f$ belongs to $E$, and by the observation above, so does its derivatives of all orders. 
	Therefore $\mathcal{A}_{k}:=\textup{Span}\{f,f',f'',...,f^{(k)}\}$ is a linear subspace of $E$ 
	for every integer $k\geq0$, which implies $\textup{dim}\,\mathcal{A}_{k}\leq m+1$ for every $k$. 
	According to Lemma \ref{lem:6.5}, this is only possible if $\alpha\in\mathbb{N}$, that is, 
	if $\frac{2}{\lambda}\in\mathbb{N}$.
\end{proof}

	In what follows, we will use the following notations:

	\begin{itemize}
	\item $\alpha_{0},...,\alpha_{p}$ are the unique real numbers such that 
		$\alpha_{0}<...<\alpha_{p}$ and $\textup{Im}(F)=\{\alpha_{0},...,\alpha_{p}\},$
	\item $I_{i}:=\{ k\;\in\;I\:|\: F_{k}=\alpha_{i}\}$,
		$i=0,...,p$,
	\item $e^{\omega_{i}}=\underset{k\;\in\; I_{i}}{\sum}e^{C_{k}}$.
	\end{itemize}
	Note that $p\leq m$ and that $\alpha_{0}=F_{min}$ 
	and $\alpha_{p}=F_{max}$.
\begin{lemma}\label{ndsndkdvnsn}
	Assume that $\textup{Scal}\equiv\frac{2}{d}$ on $T\mathcal{E}$, 
	with $d\in\mathbb{N}^{*}$. With the notation of Proposition \ref{prop:6.6}, we have
	\begin{eqnarray*}
		\sum_{k=0}^{p}e^{\theta\alpha_{k}+\omega_{k}}=
		\sum_{l=0}^{d}e^{\big[\alpha_{0}+\tfrac{l}{d}(\alpha_{p}-\alpha_{0})\big]\theta+rl+s(d-l)+\ln\binom{d}{l}}
	\end{eqnarray*}
	for every $\theta\in\mathbb{R}$.
\end{lemma}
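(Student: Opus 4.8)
The plan is to prove the asserted equality by checking that both sides coincide with $e^{\psi(\theta)}$. For the left-hand side I would start from the closed form of $\psi$ given in Proposition~\ref{prop:5.12}(v), namely $e^{\psi(\theta)}=\sum_{k=0}^{m}e^{C_{k}+\theta F_{k}}$, and then regroup the $m+1$ summands according to the value taken by $F$. Collecting all indices $k\in I_{i}$ (for which $F_{k}=\alpha_{i}$) and using the definition $e^{\omega_{i}}=\sum_{k\in I_{i}}e^{C_{k}}$, the sum collapses to $\sum_{i=0}^{p}e^{\theta\alpha_{i}+\omega_{i}}$, which is precisely the left-hand side of the identity. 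This step is a pure reindexing of a finite sum and uses no analysis.

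For the right-hand side I would invoke Proposition~\ref{prop:6.2}. Since $\textup{Scal}\equiv\tfrac{2}{d}$ we have $\lambda=\tfrac{2}{d}$, hence $\tfrac{2}{\lambda}=d$, and the proposition gives $\psi(\theta)=d\ln\bigl\{e^{a\theta+r}+e^{b\theta+s}\bigr\}$ for suitable real numbers $a,b,r,s$ with $a\neq b$. Exponentiating yields $e^{\psi(\theta)}=\bigl(e^{a\theta+r}+e^{b\theta+s}\bigr)^{d}$. The decisive point — and the reason the integrality of $d$, established in Proposition~\ref{prop:6.6}, is indispensable — is that the exponent is now a positive integer, so the binomial theorem applies and expresses $e^{\psi(\theta)}$ as the finite sum $\sum_{l=0}^{d}\binom{d}{l}\bigl(e^{a\theta+r}\bigr)^{d-l}\bigl(e^{b\theta+s}\bigr)^{l}$.

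It then remains to identify the exponents. Ordering so that $a<b$, Remark~\ref{nfekwdkefnknk} gives $a=\tfrac{\lambda}{2}F_{min}=\tfrac{1}{d}\alpha_{0}$ and $b=\tfrac{\lambda}{2}F_{max}=\tfrac{1}{d}\alpha_{p}$, using $\alpha_{0}=F_{min}$ and $\alpha_{p}=F_{max}$. Substituting, the coefficient of $\theta$ in the $l$-th term becomes $(d-l)a+lb=\alpha_{0}+\tfrac{l}{d}(\alpha_{p}-\alpha_{0})$; writing $\binom{d}{l}=e^{\ln\binom{d}{l}}$ and collecting the constant exponents, the $l$-th summand reduces to $\exp\bigl\{\bigl[\alpha_{0}+\tfrac{l}{d}(\alpha_{p}-\alpha_{0})\bigr]\theta+rl+s(d-l)+\ln\binom{d}{l}\bigr\}$. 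Equating the two computations of $e^{\psi(\theta)}$ then gives the claimed identity for every $\theta\in\mathbb{R}$. I do not expect a genuine obstacle here: the content of the lemma is entirely carried by the two prior results, the only delicate point being the identification of $a$ and $b$ with the extremal values $\alpha_{0},\alpha_{p}$ of $F$ via Remark~\ref{nfekwdkefnknk}, without which the two finite exponential sums could not be matched term by term. This is also the step that makes transparent why the binomial coefficients — and hence the binomial distribution — appear in the classification.
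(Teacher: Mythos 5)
Your proof is correct and is essentially the paper's own argument: equate the closed form $\psi(\theta)=d\ln\{e^{a\theta+r}+e^{b\theta+s}\}$ from Propositions \ref{prop:6.2} and \ref{prop:6.6} with $\psi(\theta)=\ln\{\sum_k e^{C_k+\theta F_k}\}$, expand the $d$-th power by the binomial theorem, regroup the other side by the level sets $I_i$ of $F$, and identify $a,b$ with $\alpha_0/d,\alpha_p/d$ via Remark \ref{nfekwdkefnknk}. The only (immaterial) difference is that your pairing of the constants attaches $r$ to the $(d-l)$-th power and $s$ to the $l$-th, yielding $r(d-l)+sl$ rather than $rl+s(d-l)$, which is just a relabeling of the arbitrary constants $r,s$.
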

\begin{proof}
	We know from Proposition \ref{prop:6.2} and Proposition \ref{prop:6.6} that 
	there are real numbers $a,b,r,s$, with $a<b$, and 
	$d\in \mathbb{N}^{*}$ such that 
	\begin{eqnarray*}
		 \psi(\theta)=d\ln\big(e^{a\theta+r}+e^{b\theta+s}\big)=d\ln\Big(e^{\tfrac{\alpha_{0}}{d}\theta+r}
		+e^{\tfrac{\alpha_{p}}{d}\theta+s}\Big), 
	\end{eqnarray*}
	for all $\theta\in \mathbb{R}$, where we have used $a=\tfrac{\lambda}{2}F_{min}=\tfrac{\alpha_{0}}{d}$ 
	and $b=\tfrac{\lambda}{2}F_{max}=\tfrac{\alpha_{p}}{d}$ (see Remark \ref{nfekwdkefnknk}). 
	From Proposition \ref{prop:5.12}, we also have that 
	$\psi(\theta)=\ln\big(\sum_{k=0}^{m}e^{C_{k}+\theta F_{k}}\big)$. Therefore 
\begin{eqnarray}
	\Big(e^{\tfrac{\alpha_{p}}{d}\theta+r}+e^{\tfrac{\alpha_{0}}{d}\theta+s}\Big)^{d}
	=\sum_{k=0}^{m}e^{C_{k}+\theta F_{k}}.\label{eq:6.12}
\end{eqnarray}
	We compute the left and right hand sides of \eqref{eq:6.12} separately:
\begin{eqnarray*}
	\textup{left hand side} &=& \sum_{l=0}^{d}\binom{d}{l}\Bigl(e^{\tfrac{\alpha_{p}}{d}\theta+r}\Bigr)^{l}\Bigl(
		e^{\tfrac{\alpha_{0}}{d}\theta+s}\Bigr)^{d-l} \\
 	&=&\sum_{l=0}^{d}e^{\big[\alpha_{0}+\tfrac{l}{d}(\alpha_{p}-\alpha_{0})\big]\theta+rl+s(d-l)+\ln\binom{d}{l}},\\
	\textup{right hand side} &=&  \sum_{k\;\in\; I_{0}}^{m}e^{C_{k}+\theta\alpha_{0}}+\cdots
		+\sum_{k\;\in\; I_{p}}^{m}e^{C_{k}+\theta\alpha_{p}}\\
 	&=&	e^{\theta\alpha_{0}}\Biggl(\sum_{k\;\in\; I_{0}}^{m}e^{C_{k}}\Biggr)
		+\cdots+e^{\theta\alpha_{p}}\Biggl(\sum_{k\;\in\; I_{p}}^{m}e^{C_{k}}\Biggr)\\
	&=&   e^{\theta\alpha_{0}+\omega_{0}}+\cdots+e^{\theta\alpha_{p}+\omega_{p}}, 
\end{eqnarray*}
	where we have used $e^{\omega_{i}}=\underset{k\;\in\; I_{i}}{\sum}e^{C_{k}}$. The lemma follows by comparing the 
	left and right hand sides. 
%
%
\end{proof}

\begin{lemma}\label{lem:6.8} 
	Let $\{\xi_{i},\eta_{i}\,\,\vert\,\,i=0,...,d\}$ and  $\{\alpha_{j},\omega_{j}\,\,\vert\,\,j=0,...,p\}$ be families of 
	real numbers such that $\xi_{0}<\cdots<\xi_{d}$ and $\alpha_{0}<\cdots<\alpha_{p}$. If 
	\begin{eqnarray*}
		e^{\xi_{0}\theta+\eta_{0}}+\cdots+e^{\xi_{d}\theta+\eta_{d}}=
		e^{\alpha_{0}\theta+\omega_{0}}+\cdots+e^{\alpha_{p}\theta+\omega_{p}}
	\end{eqnarray*}
	for all $\theta\in\mathbb{R}$, then $d=p$ and for every $i=0,...,d$, $\xi_{i}=\alpha_{i}$ and $\eta_{i}=\omega_{i}$.
\end{lemma}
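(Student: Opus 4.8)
The plan is to exploit the classical fact that pure exponential functions $\theta \mapsto e^{c\theta}$ with pairwise distinct growth rates $c$ are linearly independent in $C^{\infty}(\mathbb{R})$; since the hypothesis asserts that two sums of exponentials coincide identically, uniqueness of such representations will force the exponents and the coefficients to agree. Concretely, I would move everything to one side and write the hypothesis as $\sum_{c}\mu_{c}\,e^{c\theta}\equiv 0$, where $c$ ranges over the finite set $\{\xi_{0},\dots,\xi_{d}\}\cup\{\alpha_{0},\dots,\alpha_{p}\}$ of all exponents occurring, and $\mu_{c}$ is the net coefficient (equal to $e^{\eta_{i}}$, to $-e^{\omega_{j}}$, or to $e^{\eta_{i}}-e^{\omega_{j}}$, according to whether $c$ appears only among the $\xi$'s, only among the $\alpha$'s, or in both). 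Linear independence then gives $\mu_{c}=0$ for every $c$, and from there the matching of exponents and coefficients is immediate.

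Rather than invoke linear independence as a black box, I would prove the statement directly by a dominant-term induction, which is self-contained and also yields $d=p$ automatically. The ordering guarantees that the largest exponents on the two sides are $\xi_{d}$ and $\alpha_{p}$. Dividing the identity by $e^{\xi_{d}\theta}$ and letting $\theta\to+\infty$: if $\xi_{d}>\alpha_{p}$ the left side tends to $e^{\eta_{d}}>0$ while the right side tends to $0$, a contradiction, and symmetrically $\xi_{d}<\alpha_{p}$ is impossible; hence $\xi_{d}=\alpha_{p}$, and the same limit now forces $e^{\eta_{d}}=e^{\omega_{p}}$, i.e. $\eta_{d}=\omega_{p}$. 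The two top terms therefore cancel, and subtracting them leaves an identity of exactly the same shape with one fewer term on each side, to which I apply the induction hypothesis.

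The one point requiring care, and the only place where positivity is used, is verifying that the two sides cannot run out of terms at different stages. If after some cancellation steps one side were empty while the other still carried a term $e^{\xi_{i}\theta+\eta_{i}}$, then dividing by its exponential and sending $\theta\to+\infty$ would give $e^{\eta_{i}}>0$ against $0$, which is absurd; this is precisely why the coefficients, being of the form $e^{\eta}$ and $e^{\omega}$, can never vanish. Thus the induction terminates with both families exhausted simultaneously, yielding $d=p$ together with $\xi_{i}=\alpha_{i}$ and $\eta_{i}=\omega_{i}$ for all $i$. I do not anticipate any genuine obstacle here: the entire content is the order-preserving matching of exponents, made rigorous either through linear independence of exponentials or through the dominant-term limit.
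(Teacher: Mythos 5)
Your proof is correct and follows essentially the same route as the paper: isolate the dominant exponential on each side, let $\theta\to+\infty$ to force $\xi_{d}=\alpha_{p}$ and $\eta_{d}=\omega_{p}$, cancel, and iterate. Your explicit handling of the termination step (why one side cannot run out of terms before the other, giving $d=p$) is a point the paper leaves implicit in ``repeating the same argument,'' so if anything your write-up is slightly more complete.
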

\begin{proof}
	By hypothesis, we have 
	\begin{eqnarray*}
		1=\dfrac{e^{\xi_{0}\theta+\eta_{0}}+\cdots+e^{\xi_{d}\theta+\eta_{d}}}
		{e^{\alpha_{0}\theta+\omega_{0}}+\cdots+e^{\alpha_{p}\theta+\omega_{p}}}=
		\dfrac{F(\theta)e^{\xi_{d}\theta+\eta_{d}}}{G(\theta)e^{\alpha_{p}\theta+\omega_{p}}}=
		\dfrac{F(\theta)}{G(\theta)}
		e^{(\xi_{d}-\alpha_{p})\theta+(\eta_{d}-\omega_{p})}
	\end{eqnarray*}
	for all $\theta\in \mathbb{R}$, where $F,G:\mathbb{R}\to \mathbb{R}$ are functions that are easily seen 
	to satisfy $\textup{lim}_{\theta\to \infty}F(\theta)=1$ and $\textup{lim}_{\theta\to \infty}G(\theta)=1$.
	It follows that 
	\begin{eqnarray*}
		\underset{\theta\to \infty}{\textup{lim}}\,e^{(\xi_{d}-\alpha_{p})\theta+(\eta_{d}-\omega_{p})}=1, 
	\end{eqnarray*}
	which forces $\xi_{d}=\alpha_{p}$ and $\eta_{d}=\omega_{p}$. The lemma is proved by repeating the same argument. 
\end{proof}

\begin{theorem}\label{nekwndkeknskn}
	Let $\mathcal{E}$ be a 1-dimensional exponential family defined over a finite set $\Omega=\{x_{0},...,x_{m}\}$, 
	with elements of the form $p(x;\theta)=\textup{exp}\{C(x)+\theta F(x)-\psi(\theta)\}$, where 
	$C,F:\Omega\to \mathbb{R}$, $\theta\in \mathbb{R}$ and $\psi:\mathbb{R}\to \mathbb{R}$. Suppose that 
	$\textup{Im}(F)=\{\alpha_{0}<...<\alpha_{p}\}$. Given $i=0,...,p$, define $\omega_{i}\in \mathbb{R}$ via the formula 
		\begin{eqnarray*}
			e^{\omega_{i}}=\underset{k\in F^{-1}(\alpha_{i})}{\sum}e^{C(x_{k})}.
		\end{eqnarray*}
	Then the scalar curvature $\textup{Scal}:T\mathcal{E}\to \mathbb{R}$ is constant
	if and only if there exist $r,s\in\mathbb{R}$ such that
	\begin{eqnarray}
	\left\lbrace
	\begin{array}{l}\label{eq:6.18}
		\alpha_{k}=\alpha_{0}+\frac{k}{p}\bigl(\alpha_{p}-\alpha_{0}\bigr),\\
		\omega_{k}=rk+s\bigl(p-k\bigr)+\ln\binom{p}{k},
	\end{array}
	\right.
	\end{eqnarray}
	for all $k=0,...,p$, and in that case, $\textup{Scal}\equiv\frac{2}{p}$.
\end{theorem}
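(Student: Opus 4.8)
The plan is to treat the two implications separately: the forward direction ($\textup{Scal}$ constant $\Rightarrow$ \eqref{eq:6.18}) assembles the preparatory results already proved, while the converse (\eqref{eq:6.18} $\Rightarrow \textup{Scal}\equiv\tfrac{2}{p}$) rests on a direct computation in which the binomial theorem plays the decisive role.

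For the forward direction, I would first invoke Proposition \ref{prop:6.2} together with the corollary following it to obtain that the constant value $\lambda$ is strictly positive, and then Proposition \ref{prop:6.6} to write $\lambda=\tfrac{2}{d}$ for some positive integer $d$. Lemma \ref{ndsndkdvnsn} then yields the identity
\[
\sum_{k=0}^{p}e^{\theta\alpha_k+\omega_k}=\sum_{l=0}^{d}e^{[\alpha_0+\frac{l}{d}(\alpha_p-\alpha_0)]\theta+rl+s(d-l)+\ln\binom{d}{l}}
\]
for all $\theta\in\mathbb{R}$. The exponents $\alpha_0<\cdots<\alpha_p$ on the left are strictly increasing by the very definition of the $\alpha_i$, and the exponents $\alpha_0+\tfrac{l}{d}(\alpha_p-\alpha_0)$ on the right are strictly increasing in $l$ since $\alpha_p>\alpha_0$. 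Thus Lemma \ref{lem:6.8} applies and forces $d=p$ together with the termwise equalities $\alpha_k=\alpha_0+\tfrac{k}{p}(\alpha_p-\alpha_0)$ and $\omega_k=rk+s(p-k)+\ln\binom{p}{k}$, which is exactly \eqref{eq:6.18}; moreover $\lambda=\tfrac{2}{d}=\tfrac{2}{p}$.

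For the converse, I would assume \eqref{eq:6.18} and reconstruct $\psi$. Grouping the terms of $\sum_{k=0}^{m}e^{C_k+\theta F_k}$ according to the level sets $F^{-1}(\alpha_i)$ and using Proposition \ref{prop:5.12}(v) gives $\psi(\theta)=\ln\big(\sum_{i=0}^{p}e^{\omega_i+\theta\alpha_i}\big)$. Substituting \eqref{eq:6.18} and setting $a:=\tfrac{\alpha_0}{p}$, $b:=\tfrac{\alpha_p}{p}$ (so $a<b$) rewrites the $i$-th summand as $\binom{p}{i}(e^{b\theta+r})^{i}(e^{a\theta+s})^{p-i}$, so the binomial theorem collapses the sum into the perfect power $(e^{a\theta+s}+e^{b\theta+r})^{p}$ and yields $\psi(\theta)=p\ln(e^{a\theta+s}+e^{b\theta+r})$ --- precisely the shape of Proposition \ref{prop:6.2} with $\tfrac{2}{\lambda}=p$. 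To confirm the constancy of the curvature I would then compute, with $t:=\tfrac{e^{b\theta+r}}{e^{a\theta+s}+e^{b\theta+r}}\in(0,1)$, that $\eta=\psi'=p(a+(b-a)t)$ and $\eta'=\psi''=p(b-a)^2t(1-t)$; since $t$ is affine in $\eta$, this exhibits $\eta'$ as a quadratic in $\eta$ with leading coefficient $-\tfrac{1}{p}$, i.e. $\tfrac{\partial\eta}{\partial\theta}=-\tfrac{1}{p}\eta^2+A\eta+B$ for suitable $A,B\in\mathbb{R}$. As the equivalence chain inside the proof of Proposition \ref{prop:6.2} shows that this relation is \eqref{eq:6.2} with $\lambda=\tfrac{2}{p}$ and that \eqref{eq:6.2} holds if and only if $\textup{Scal}\equiv\lambda$, we conclude $\textup{Scal}\equiv\tfrac{2}{p}$.

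The forward direction is essentially bookkeeping once Lemmas \ref{ndsndkdvnsn} and \ref{lem:6.8} are invoked, so I expect the only delicate point to be the converse: one must recognize the binomial pattern encoded in \eqref{eq:6.18} in order to collapse $\sum_i e^{\omega_i+\theta\alpha_i}$ into a single $p$-th power, and then verify through the explicit expressions for $\eta$ and $\eta'$ that the reconstructed $\psi$ actually satisfies the Riccati-type relation \eqref{eq:6.2} with the correct value $\lambda=\tfrac{2}{p}$, rather than merely displaying the right analytic form.
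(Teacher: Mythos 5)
Your proof is correct and follows essentially the same route as the paper: the forward direction is exactly the paper's (Propositions \ref{prop:6.2} and \ref{prop:6.6}, then Lemma \ref{ndsndkdvnsn} and Lemma \ref{lem:6.8}), and your converse is a fully worked-out version of what the paper dismisses as ``reversing the reasoning'' --- collapsing the sum via the binomial theorem and checking the Riccati relation $\eta'=-\tfrac{1}{p}\eta^{2}+A\eta+B$, which the equivalence chain in the proof of Proposition \ref{prop:6.2} converts back into $\textup{Scal}\equiv\tfrac{2}{p}$. No gaps.
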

\begin{proof}
	$(\Rightarrow)$ This follows from Lemma \ref{ndsndkdvnsn} and Lemma \ref{lem:6.8}. $(\Leftarrow)$ This can be proved 
	by reversing the reasoning above. 
\end{proof}

\begin{corollary}\label{ekwjdkefkjk}
	Let $\mathcal{E}$ be a 1-dimensional exponential family defined over a finite set 
	$\Omega=\bigl\{ x_{0},...,x_{m}\bigr\}$. If the scalar curvature of $T\mathcal{E}$ 
	is constant and equal to $\lambda$, then
	\begin{eqnarray*}
		\lambda\in\Bigl\{\frac{2}{k}\;\Big\vert\;1\leq k\leq m\Bigr\}.
	\end{eqnarray*}
\end{corollary}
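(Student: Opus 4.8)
The plan is to deduce this directly from Theorem~\ref{nekwndkeknskn}. That theorem asserts that whenever $\textup{Scal}$ is constant, its value is necessarily $\tfrac{2}{p}$, where $p$ is the integer determined by writing $\textup{Im}(F)=\{\alpha_{0}<\cdots<\alpha_{p}\}$; equivalently, $p+1=|\textup{Im}(F)|$. Thus under the hypothesis $\textup{Scal}\equiv\lambda$ we immediately have $\lambda=\tfrac{2}{p}$, and the entire content of the corollary reduces to locating $p$ inside the range $1\leq p\leq m$. The strategy is therefore to establish the two elementary bounds $p\leq m$ and $p\geq 1$ separately.

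For the upper bound, I would observe that $\textup{Im}(F)=\{F(x_{0}),\ldots,F(x_{m})\}$ is the image of a function defined on the $(m+1)$-point set $\Omega$, so it contains at most $m+1$ distinct values. Hence $p+1\leq m+1$, that is, $p\leq m$; this is precisely the observation recorded before Lemma~\ref{ndsndkdvnsn}. For the lower bound, I would invoke the standing assumption that the family $\{1,F\}$ is linearly independent, which forces $F$ to be non-constant. Equivalently, $F_{min}\neq F_{max}$, as already noted in the excerpt, so $\textup{Im}(F)$ has at least two elements, giving $p+1\geq 2$ and therefore $p\geq 1$.

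Combining the two bounds yields $1\leq p\leq m$, and consequently $\lambda=\tfrac{2}{p}$ lies in $\bigl\{\tfrac{2}{k}\,\bigl|\,1\leq k\leq m\bigr\}$, as claimed. I do not anticipate any genuine obstacle: the corollary is a bookkeeping consequence of Theorem~\ref{nekwndkeknskn} together with these two counting estimates on $p$. The only point requiring a moment of care is invoking the linear-independence hypothesis to guarantee $p\geq 1$; without it, a constant $F$ would formally produce $p=0$ and an undefined value $\tfrac{2}{0}$, so it is important to flag that this degenerate case is excluded by definition of a $1$-dimensional exponential family.
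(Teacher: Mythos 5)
Your argument is correct and is exactly the intended derivation: the paper gives no explicit proof, but Theorem \ref{nekwndkeknskn} yields $\lambda=\tfrac{2}{p}$, and the two bounds you use ($p\leq m$ and $F_{min}\neq F_{max}$, hence $p\geq 1$) are precisely the observations the paper records just before Lemma \ref{ndsndkdvnsn} and at the start of Section \ref{fnekwndkefnknk}. Nothing is missing.
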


\begin{example}\label{nnkqwnkenkwnk}
\textbf{(Binomial distribution)}. Recall that elements of $\mathcal{B}(n)$ are parametrized as follows 
	\begin{eqnarray*}
		p(k;\theta)=\binom{n}{k}q^{k}\bigl(1-q\bigr)^{n-k}=
		\textup{exp}\bigg\{\ln\binom{n}{k} +\theta k-n\ln(1+e^{\theta}\bigr)\bigg\},
	\end{eqnarray*}
	where $k\in\bigl\{0,...,n\bigr\}$ and $\theta=\ln(\tfrac{q}{1-q})\in \mathbb{R}$. In this case, we have $p=n$ and 
	\begin{eqnarray*}
		 \alpha_{k}=k \,\,\,\,\,\,\,\,\,\,\,\,\,\,\,\,\textup{and} \,\,\,\,\,\,\,\,\,\,\,\,\,\,\,\,
		\omega_{k}=\ln\binom{n}{k}
	\end{eqnarray*}
	for all $k=0,...,n$. Clearly $\alpha_{0},\alpha_{1},...,\alpha_{n}$ and $\omega_{0},...,\omega_{n}$ are solutions 
	of \eqref{eq:6.18} with $r=s=0$. Therefore the scalar curvature of $T\mathcal{B}(n)$ is constant and equal 
	to $\frac{2}{n}$.
\end{example}

\section{Equivalent and reduced exponential families}\label{kwdenfknknk}

	The following notation will be used throughout this section. Given a finite set $\Omega=\{x_{0},...,x_{m}\}$, let 
	$C(\Omega)$ denote the space of maps $\Omega\to \mathbb{R}$ (clearly there is a natural identification 
	$C(\Omega)\cong \mathbb{R}^{m+1}$). Given $F,C\in C(\Omega)$, let $\mathcal{E}_{C,F}$ 
	denote the 1-dimensional exponential family defined over $\Omega$ with elements of the form 
	\begin{eqnarray*}
		 p_{C,F}(x;\theta)=\textup{exp}\big\{C(x)+\theta F(x)-\psi_{C,F}(\theta)\big\},
	\end{eqnarray*}
	where $x\in \Omega$, $\theta\in \mathbb{R}$ and $\psi_{C,F}(\theta)=\ln\big(\sum_{k=0}^{m}\textup{exp}(C(x_{k})
	+\theta F(x_{k}))\big).$ In the above notation, it is assumed that the function $F$ and the constant function 
	$\mathds{1}:\Omega\to \mathbb{R},$ $x\mapsto 1$ are linearly independent (this guarantees that the map 
	$\mathbb{R}\to \mathcal{E}_{C,F}$, $\theta\mapsto p_{C,F}(\,.\,;\theta)$ is bijective). In other words, 
	$F\in \mathbb{R}^{m+1}-\mathbb{R}\cdot \mathds{1}$. 
\begin{definition}\label{kwwdefksn}
	Two 1-dimensional exponential families $\mathcal{E}_{C,F}$ and $\mathcal{E}_{C',F'}$ defined over the same 
	set $\Omega=\{x_{0},...,x_{m}\}$ are \textit{equivalent} if the families of maps 
	$\{p_{C,F}(\,.\,;\theta)\,:\,\Omega\to \mathbb{R}\}_{\theta\in \mathbb{R}}$ and 
	$\{p_{C',F'}(\,.\,;\theta)\,:\,\Omega\to \mathbb{R}\}_{\theta\in \mathbb{R}}$ coincide. 
\end{definition}
	
	In order to caracterize equivalent exponential families, we introduce the group of matrices 
	\begin{eqnarray*}
		G:=\bigg\{
		 \begin{bmatrix}
			1 & b & d\\
			0 & a & c\\
			0 & 0 & 1
		\end{bmatrix}
		\,\,\,\,\bigg\vert\,\,\,\, a,b,c,d\in \mathbb{R}, \,\,a\neq 0 \bigg\}.
	\end{eqnarray*}
	Given an integer $m\geq 1$, the group $G$ acts on 
	$U_{m}:=\mathbb{R}^{m+1}\times (\mathbb{R}^{m+1}-\mathbb{R}\cdot\mathds{1})$ via the formula
	\begin{eqnarray*}
		  \begin{bmatrix}
			1 & b & d\\
			0 & a & c\\
			0 & 0 & 1
		\end{bmatrix}
		\cdot (C,F):= (C+bF+d\mathds{1}, aF+c\mathds{1}), 
	\end{eqnarray*}
	where $C\in \mathbb{R}^{m+1}$ and $F\in \mathbb{R}^{m+1}-\mathbb{R}\cdot\mathds{1}$. 
\begin{proposition}\label{nknknkn}
	Two 1-dimensional exponential families $\mathcal{E}_{C,F}$ and $\mathcal{E}_{C',F'}$ defined over the same 
	finite set $\Omega$ are equivalent if and only if there exists $g=\Big[\begin{smallmatrix}
		1&b&d\\
		0&a&c\\
		0&0&1
	\end{smallmatrix} \Big]\in G$ such that $(C,F)=g\cdot (C',F')$. 
	In that case, the following holds. 
	\begin{enumerate}[(i)]
	\item $p_{C,F}(x;\theta)=p_{C',F'}(x;a\theta+b)$ \,\,\,\,\,for all $x\in \Omega$ and all $\theta\in \mathbb{R}$. 
	\item $\psi_{C,F}(\theta)=\psi_{C',F'}(a\theta+b)+c\theta+d$ \,\,\,\,\,for all $\theta\in \mathbb{R}$. 
	\end{enumerate}
\end{proposition}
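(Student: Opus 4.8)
The plan is to treat the two implications separately, the sufficiency direction ($\Leftarrow$) being a direct substitution and the necessity direction ($\Rightarrow$) requiring a short linear-algebra argument in $C(\Omega)\cong\mathbb{R}^{m+1}$.

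\textbf{Sufficiency.} Assume $(C,F)=g\cdot(C',F')$, that is, $C=C'+bF'+d\mathds{1}$ and $F=aF'+c\mathds{1}$ with $a\neq0$. I would first record the pointwise identity
\[
C(x)+\theta F(x)=C'(x)+(a\theta+b)F'(x)+(c\theta+d),\qquad x\in\Omega.
\]
Feeding this into $\psi_{C,F}(\theta)=\ln\big(\sum_{k}e^{C(x_{k})+\theta F(x_{k})}\big)$ and factoring out $e^{c\theta+d}$ yields (ii), namely $\psi_{C,F}(\theta)=\psi_{C',F'}(a\theta+b)+c\theta+d$. Substituting both identities into $p_{C,F}(x;\theta)=e^{C(x)+\theta F(x)-\psi_{C,F}(\theta)}$ makes the terms $c\theta+d$ cancel and gives (i), $p_{C,F}(x;\theta)=p_{C',F'}(x;a\theta+b)$. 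Since $a\neq0$, the map $\theta\mapsto a\theta+b$ is a bijection of $\mathbb{R}$, so (i) shows the two families coincide, i.e. $\mathcal{E}_{C,F}$ and $\mathcal{E}_{C',F'}$ are equivalent.

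\textbf{Necessity.} Here I start from the equivalence and build $g$. Because $\{\mathds{1},F'\}$ is linearly independent, $\theta\mapsto p_{C',F'}(\,\cdot\,;\theta)$ is injective, so the equivalence furnishes a well-defined bijection $\phi:\mathbb{R}\to\mathbb{R}$ characterised by $p_{C,F}(\,\cdot\,;\theta)=p_{C',F'}(\,\cdot\,;\phi(\theta))$. Taking logarithms gives, for all $x\in\Omega$ and $\theta\in\mathbb{R}$,
\[
C(x)+\theta F(x)-\psi_{C,F}(\theta)=C'(x)+\phi(\theta)F'(x)-\psi_{C',F'}(\phi(\theta)).
\]
For each fixed $\theta$ the right-hand constant term shows that $(C-C')+\theta F-\phi(\theta)F'$ is constant as a function of $x$, hence lies in $\mathbb{R}\mathds{1}$. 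Subtracting this relation for two parameter values $\theta_{1},\theta_{2}$ forces $(\theta_{1}-\theta_{2})F-(\phi(\theta_{1})-\phi(\theta_{2}))F'\in\mathbb{R}\mathds{1}$; were $\{\mathds{1},F,F'\}$ independent this would give $\theta_{1}=\theta_{2}$, a contradiction. Thus $\{\mathds{1},F,F'\}$ is dependent, and since $\{\mathds{1},F\}$ is independent I may write $F'=\rho F+\sigma\mathds{1}$ with $\rho\neq0$ (the condition $\rho\neq0$ coming from independence of $\{\mathds{1},F'\}$).

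The key step is then to extract the affinity of $\phi$: substituting $F'=\rho F+\sigma\mathds{1}$ into the membership $(C-C')+\theta F-\phi(\theta)F'\in\mathbb{R}\mathds{1}$ and using independence of $\{\mathds{1},F\}$ to annihilate the coefficient of $F$ yields $\phi(\theta)=a\theta+b$ with $a=1/\rho\neq0$, while $F'=\rho F+\sigma\mathds{1}$ rewrites as $F=aF'+c\mathds{1}$ for a suitable $c$. Finally I would reinsert $\phi(\theta)=a\theta+b$ and $F=aF'+c\mathds{1}$ into the displayed log-identity and separate the $x$-dependence from the $\theta$-dependence; this produces a single constant $d$ with $C=C'+bF'+d\mathds{1}$ (the $x$-part) and simultaneously $\psi_{C,F}(\theta)=\psi_{C',F'}(a\theta+b)+c\theta+d$ (the $\theta$-part, which is (ii)). Assembling $a,b,c,d$ into $g$ gives $(C,F)=g\cdot(C',F')$, and (i) is immediate from the definition of $\phi$. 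The main obstacle is precisely this middle step, turning the pointwise constancy into the linear dependence of $\{\mathds{1},F,F'\}$ and thence into the affinity of $\phi$; everything else is bookkeeping.
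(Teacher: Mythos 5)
Your proof is correct and follows essentially the same route as the paper: introduce the reparametrization $\phi$ via bijectivity of $\theta\mapsto p(\,\cdot\,;\theta)$, take logarithms, show $\phi$ is affine using linear independence of $F'$ and $\mathds{1}$, and then separate the $x$- and $\theta$-dependence to obtain $F=aF'+c\mathds{1}$, $C=C'+bF'+d\mathds{1}$ and the relation between the $\psi$'s. The only minor variation is that the paper derives the affinity of $\phi$ by evaluating the log-identity at two points where $F'$ differs and then differentiates in $\theta$ to split variables, whereas you argue via linear dependence of $\{\mathds{1},F,F'\}$ and separate variables purely algebraically; both are sound, and your sufficiency direction fills in the computation the paper leaves as an exercise.
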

\begin{proof}	
	$(\Rightarrow)$ Suppose $\mathcal{E}_{C,F}\sim\mathcal{E}_{C',F'}$. Because the maps $\mathbb{R}\to \mathcal{E}_{C,F}$, 
	$\theta\mapsto p_{C,F}(\,.\,;\theta)$ and 
	$\mathbb{R}\to \mathcal{E}_{C',F'}$, $\theta'\mapsto p_{C',F'}(\,.\,;\theta')$ are bijective, there exists a bijection 
	$\phi:\mathbb{R}\to \mathbb{R}$ such that 
	\begin{eqnarray}\label{kndkefknkn}
		p_{C,F}(x;\theta)=p_{C',F'}(x;\phi(\theta))		 
	\end{eqnarray}
 	for all $\theta\in \mathbb{R}$ and all $x\in \Omega.$ Since $F'$ and $\mathds{1}$ are linearly independent, there 
	exist $y,z\in \Omega$ such that $F'(y)\neq F'(z)$. Putting $x=y$ and $x=z$ 
	in \eqref{kndkefknkn}, we obtain the following system 
	\begin{eqnarray*}
		 \left\lbrace
		\begin{array}{lll}
			C(y)+\theta F(y)-\psi_{C,F}(\theta)=C'(y)+\phi(\theta) F'(y)-\psi_{C',F'}(\phi(\theta)),\\
			C(z)+\theta F(z)-\psi_{C,F}(\theta)=C'(z)+\phi(\theta) F'(z)-\psi_{C',F'}(\phi(\theta)).
		\end{array}
		\right.  
	\end{eqnarray*}
	Subtracting, we obtain 
	\begin{eqnarray*}
		 \phi(\theta)=\theta\dfrac{F(y)-F(z)}{F'(y)-F'(z)}+\dfrac{C(y)-C(z)-(C'(y)-C'(z))}{F'(y)-F'(z)}
	\end{eqnarray*}
	for all $\theta\in \mathbb{R}$. Therefore there exist $a,b\in \mathbb{R}$ such 
	that $\phi(\theta)=a\theta+b$ for all $\theta\in \mathbb{R}$. Note 
	that $a$ is necessarily nonzero. Taking the derivative in \eqref{kndkefknkn} with respect to $\theta$ and using the 
	formula $\phi(\theta)=a\theta+b$ we find that  
	\begin{eqnarray*}
		 F(x)-aF'(x)=\dfrac{d\psi_{C,F}}{d\theta}(\theta)-a\dfrac{d\psi_{C',F'}}{d\theta'}(a\theta+b)
	\end{eqnarray*}
	for all $\theta\in \mathbb{R}$ and all $x\in \Omega.$ This implies that there exists $c\in \mathbb{R}$ such that 
	\begin{eqnarray}\label{nwknkwnkdnkan}
		F(x) =aF'(x)+c
	\end{eqnarray}
	for all $x\in \Omega$, and 
	\begin{eqnarray*}
		 \dfrac{d\psi_{C,F}}{d\theta}(\theta)=a\dfrac{d\psi_{C',F'}}{d\theta'}(a\theta+b)+c
	\end{eqnarray*}
	for all $\theta\in \mathbb{R}$. Integrating the equation above, we obtain 
	\begin{eqnarray}\label{ndwkndkknkdn}
		 \psi_{C,F}(\theta)=\psi_{C',F'}(a\theta+b)+c\theta+d
	\end{eqnarray}
	for all $\theta\in \mathbb{R}$, where $d\in \mathbb{R}$ is some constant. 
	Then, using \eqref{kndkefknkn}, \eqref{nwknkwnkdnkan} and \eqref{ndwkndkknkdn} we see that 
	\begin{eqnarray}\label{nfkndefksnk}
		C(x) =C'(x)+bF'(x)+d
	\end{eqnarray}
	for all $x\in \Omega$. It follows from \eqref{nwknkwnkdnkan} and 
	\eqref{nfkndefksnk} that $(C,F)= \Big[\begin{smallmatrix}
		1&b&d\\
		0&a&c\\
		0&0&1
	\end{smallmatrix} \Big]\cdot (C',F')$, which concludes one direction of the proof. 
	Note that the computation above shows that if $\mathcal{E}_{C,F}\sim \mathcal{E}_{C',F'}$, then (i) and (ii) hold. 

	$(\Leftarrow)$ Left as a simple exercice to the reader. 
\end{proof}

\begin{definition}
	Let $V$ be a finite dimensional real vector space and let $k$ be an integer satisfying $1\leq k\leq \textup{dim}\,V$. 
	The \textit{affine Grassmannian}, denoted by $\textup{Graff}_{k}(V)$, is the set of all $k$-dimensional affine 
	subspaces of $V$. 
\end{definition}
	It can be shown that $\textup{Graff}_{k}(V)$ is a noncompact smooth manifold of dimension $(n-k)(k+1)$, 
	where $n=\textup{dim}\,V$ (see \cite{Lim2019}). 

	Given an integer $m\geq 1$, the space $\mathbb{R}^{m+1}$ decomposes as the following direct sum:
	\begin{eqnarray*}
		 \mathbb{R}^{m+1}=V_{m}\oplus \mathbb{R}\cdot \mathds{1}, 
	\end{eqnarray*}
	where $V_{m}$ is the orthogonal complement of 
	$\mathds{1}=(1,...,1)$ in $\mathbb{R}^{m+1}$ with respect to the usual 
	inner product $\langle,\rangle$ on $\mathbb{R}^{m+1}$, that is, 
	\begin{eqnarray*}
		 V_{m}=\{u\in \mathbb{R}^{m+1}\,\,\vert\,\,\langle u,\mathds{1}\rangle=0\}. 
	\end{eqnarray*}
	Given $u\in \mathbb{R}^{m+1}$, we will denote by $u^{\perp}\in V_{m}$ the orthogonal projection of 
	$u$ on $V_{m}$. 

	Finally, given $(C,F)\in U_{m}$, we will denote by $[C,F]$ the corresponding equivalence class in the quotient space 
	$U_{m}/G$. 

\begin{proposition}\label{nekwndknk}
	For every integer $m\geq 1$, the map 
	\begin{eqnarray*}
		f:U_{m}/G\to \textup{Graff}_{1}(V_{m})		 
	\end{eqnarray*}
	given by $f([C,F]):=C^{\perp}+\textup{span}\{F^{\perp}\}$ is a bijection. 
\end{proposition}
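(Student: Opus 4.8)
The plan is to exploit the orthogonal decomposition $\mathbb{R}^{m+1}=V_{m}\oplus\mathbb{R}\cdot\mathds{1}$ together with the observation that $\mathds{1}^{\perp}=0$, so that the projection $u\mapsto u^{\perp}$ annihilates every multiple of $\mathds{1}$ appearing in the $G$-action. Concretely, if $(C,F)=g\cdot(C',F')$ with $g=\Big[\begin{smallmatrix}1&b&d\\0&a&c\\0&0&1\end{smallmatrix}\Big]$, then $C=C'+bF'+d\mathds{1}$ and $F=aF'+c\mathds{1}$, and applying $\perp$ gives
\[
	C^{\perp}=(C')^{\perp}+b(F')^{\perp},\qquad F^{\perp}=a(F')^{\perp}.
\]
I would record this computation first, as it drives every part of the argument.

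Well-definedness of $f$ then follows at once: since $a\neq 0$ we have $\textup{span}\{F^{\perp}\}=\textup{span}\{(F')^{\perp}\}$, while $C^{\perp}$ differs from $(C')^{\perp}$ by the vector $b(F')^{\perp}$ lying in that common direction; hence the affine lines $C^{\perp}+\textup{span}\{F^{\perp}\}$ and $(C')^{\perp}+\textup{span}\{(F')^{\perp}\}$ share both a direction and a point, so they coincide. (Here $F^{\perp}\neq 0$ because $F\notin\mathbb{R}\cdot\mathds{1}$, so $f([C,F])$ really is a $1$-dimensional affine subspace.) Surjectivity is equally direct: given a line $L=v_{0}+\textup{span}\{w\}$ in $\textup{Graff}_{1}(V_{m})$ with $v_{0},w\in V_{m}$ and $w\neq 0$, the pair $(C,F):=(v_{0},w)$ lies in $U_{m}$ and satisfies $f([C,F])=L$.

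Injectivity is the real content. Assuming $f([C,F])=f([C',F'])$, equality of the two affine lines first forces their directions to agree, producing a scalar $a\neq 0$ with $F^{\perp}=a(F')^{\perp}$, and then forces $C^{\perp}-(C')^{\perp}$ to lie in that common direction, producing $b\in\mathbb{R}$ with $C^{\perp}=(C')^{\perp}+b(F')^{\perp}$. It remains to recover the two ``vertical'' parameters: writing $C,C',F,F'$ in the decomposition $V_{m}\oplus\mathbb{R}\cdot\mathds{1}$ and matching the $\mathds{1}$-components of the desired identities $F=aF'+c\mathds{1}$ and $C=C'+bF'+d\mathds{1}$ determines $c$ and $d$ uniquely, as the scalars absorbing the discrepancy along $\mathds{1}$. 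The resulting $g\in G$ satisfies $(C,F)=g\cdot(C',F')$, whence $[C,F]=[C',F']$.

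I do not expect a genuine obstacle here; the only point demanding care is the bookkeeping of the direct-sum decomposition, extracting the parameters in the correct order---$a$ and $b$ from the $V_{m}$-data of the line, then $c$ and $d$ from the $\mathds{1}$-components---while confirming $a\neq 0$. The conceptually cleanest presentation would observe that $f$ factors as the projection $U_{m}\to V_{m}\times(V_{m}\setminus\{0\})$, $(C,F)\mapsto(C^{\perp},F^{\perp})$, followed by $(u,v)\mapsto u+\textup{span}\{v\}$; the $G$-action restricts on the first factor to precisely the affine reparametrization action whose orbits are the $1$-dimensional affine lines, and acts on the $\mathds{1}$-components freely enough to collapse them entirely.
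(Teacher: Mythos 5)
Your argument is correct and is exactly the ``direct verification'' that the paper leaves to the reader: project the $G$-action onto $V_{m}$ (killing the $\mathds{1}$-terms) to get well-definedness, and for injectivity recover $a,b$ from the equality of affine lines and then $c,d$ from the $\mathds{1}$-components of the direct-sum decomposition. No gaps.
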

\begin{proof}
	By a direct verification.
\end{proof}
	It follows from Proposition \ref{nknknkn} and Proposition \ref{nekwndknk} that the set of equivalence classes 
	of 1-dimensional exponential families defined over the same finite set $\Omega=\{x_{0},...,x_{m}\}$ 
	is in one-to-one correspondence with $\textup{Graff}_{1}(V_{m})$. 
\begin{example}
	All 1-dimensional exponential families defined over $\Omega=\{x_{0},x_{1}\}$ are equivalent, because
	$\textup{Graff}_{1}(V_{1})=\{V_{1}\}$ is a single point. 
\end{example}

\begin{definition}\label{nckwdknknk}
	Let $\mathcal{E}=\mathcal{E}_{C,F}$ be a 1-dimensional exponential family defined over a finite set 
	$\Omega=\{x_{0},...,x_{m}\}$. Let $\{\alpha_{i}\}_{i=0,...,p}$ and $\{\omega_{i}\}_{i=0,...,p}$ be the families 
	of real numbers characterized by the following conditions: 
	\begin{enumerate}[(i)]
		\item $\textup{Im}\,F=\{\alpha_{0},...,\alpha_{p}\}$ and $\alpha_{0}<...<\alpha_{p}$, 
		\item $e^{\omega_{i}}=\sum_{k\in F^{-1}(\alpha_{i})}e^{C(x_{k})}$. 
	\end{enumerate}
	Let $\Omega_{red}=\{0,1,...,p\}$. Define $C_{red},F_{red}:\Omega_{red}\to \mathbb{R}$ by 
	\begin{eqnarray*}
		F_{red}(k)=\alpha_{k} \,\,\,\,\,\,\,\textup{and} \,\,\,\,\,\,\, C_{red}(k)=\omega_{k},
	\end{eqnarray*}
	where $k=0,...,p$. Then $\mathcal{E}_{red}:=\mathcal{E}_{C_{red},F_{red}}$ is a 1-dimensional 
	exponential family defined over $\Omega_{red}$. We call it the \textit{reduced exponential family} of 
	$\mathcal{E}$. 
\end{definition}
\begin{remark}
	If $\mathcal{E}=\mathcal{E}_{C,F}$ is a 1-dimensional exponential family defined over a finite set $\Omega$, 
	then $\psi_{C,F}(\theta)=\psi_{C_{red},F_{red}}(\theta)$ for all $\theta\in \mathbb{R}$. 
\end{remark}
\begin{proposition}\label{nkdnkenkknk}
	Let $\mathcal{E}=\mathcal{E}_{C,F}$ be a 1-dimensional exponential family defined over a finite set 
	$\Omega=\{x_{0},...,x_{m}\}$. The following are equivalent.
	\begin{enumerate}[(i)]
	\item The scalar curvature of $T\mathcal{E}$ is constant. 
	\item $\mathcal{E}_{red}\sim \mathcal{B}(p)$, where $p+1$ is the cardinality of $\Omega_{red}$. 
	\end{enumerate}
\end{proposition}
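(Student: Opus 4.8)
The plan is to deduce the equivalence entirely from the classification Theorem~\ref{nekwndkeknskn} and the description of equivalent families in Proposition~\ref{nknknkn}, so that the proposition becomes a dictionary translating one into the other. The starting observation is that the numbers $\alpha_0<\cdots<\alpha_p$ and $\omega_0,\dots,\omega_p$ attached to $\mathcal{E}$ in Theorem~\ref{nekwndkeknskn} are, by construction (Definition~\ref{nckwdknknk}), exactly the defining data $F_{red}(k)=\alpha_k$ and $C_{red}(k)=\omega_k$ of the reduced family over $\Omega_{red}=\{0,\dots,p\}$.

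First I would apply Theorem~\ref{nekwndkeknskn} directly to $\mathcal{E}$: condition~(i) holds if and only if there exist $r,s\in\mathbb{R}$ with
\[
\alpha_k=\alpha_0+\tfrac{k}{p}(\alpha_p-\alpha_0),\qquad
\omega_k=rk+s(p-k)+\ln\tbinom{p}{k}
\]
for all $k=0,\dots,p$. Next I would unwind condition~(ii). Since $\mathcal{E}_{red}$ and $\mathcal{B}(p)$ are both defined over $\Omega_{red}=\{0,\dots,p\}$, and the binomial family has data $F_{\mathcal{B}}(k)=k$, $C_{\mathcal{B}}(k)=\ln\binom{p}{k}$ (Example~\ref{exa:5.6}), Proposition~\ref{nknknkn} says $\mathcal{E}_{red}\sim\mathcal{B}(p)$ if and only if there is a matrix in $G$ sending $(C_{\mathcal{B}},F_{\mathcal{B}})$ to $(C_{red},F_{red})$; componentwise this reads
\[
\alpha_k=ak+c,\qquad \omega_k=\ln\tbinom{p}{k}+bk+d
\]
for some $a\neq 0$ and $b,c,d\in\mathbb{R}$.

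The last step is a purely algebraic matching of these two systems. The condition $\alpha_k=ak+c$ is visibly the same affine constraint as $\alpha_k=\alpha_0+\tfrac{k}{p}(\alpha_p-\alpha_0)$, via $c=\alpha_0$ and $a=\tfrac1p(\alpha_p-\alpha_0)$; here $a\neq0$ is automatic because $\alpha_0=F_{min}<F_{max}=\alpha_p$. For the second equation one rewrites $rk+s(p-k)=(r-s)k+sp$, so the two descriptions agree under $b=r-s$, $d=sp$ (and inversely $s=d/p$, $r=b+d/p$), the division being legitimate since $p\geq1$. This establishes (i)$\Leftrightarrow$(ii), and Theorem~\ref{nekwndkeknskn} simultaneously records that the common value is $\textup{Scal}\equiv\frac{2}{p}$. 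I do not expect a genuine obstacle: all the analytic work is already contained in Theorem~\ref{nekwndkeknskn} and Proposition~\ref{nknknkn}, and the only point to watch is that $\mathcal{E}$ and $\mathcal{E}_{red}$ share the same $\psi$ (the Remark after Definition~\ref{nckwdknknk}), which is what makes applying the classification to either family legitimate.
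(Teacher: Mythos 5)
Your proposal is correct and follows essentially the same route as the paper: both directions reduce to matching the system \eqref{eq:6.18} from Theorem \ref{nekwndkeknskn} against the $G$-action of Proposition \ref{nknknkn} applied to $(C_{B},F_{B})=(\ln\binom{p}{k},k)$, with the identical change of parameters $b=r-s$, $d=sp$ (equivalently $s=d/p$, $r=b+d/p$); the paper merely writes the group element as an explicit matrix where you write the componentwise equations.
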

\begin{proof}
	Let $\Omega_{B}=\{0,1,...,p\}$ and let $C_{B},F_{B}\,:\,\Omega_{B}\to \mathbb{R}$ be defined 
	by $C_{B}(k)=\ln\binom{p}{k}$ and $F_{B}(k)=k.$ Comparing with Example \ref{exa:5.6}, we see that 
	$\mathcal{E}_{C_{B},F_{B}}\sim \mathcal{B}(p)$.

	If the scalar curvature of $T\mathcal{E}$ is constant, then by Theorem \ref{nekwndkeknskn} 
	there are real numbers $r$ and $s$ such that 
	\begin{eqnarray}\label{nkandkfekwnwk}
		 \left\lbrace
		\begin{array}{lll}
			F_{red}(k)=F_{red}(0)+\tfrac{k}{p}(F_{red}(p)-F_{red}(0)),\\
			C_{red}(k)=rk+s(p-k)+\ln\binom{p}{k},
		\end{array}
		\right.
	\end{eqnarray}
	for all $k=0,...,p$, which implies that 
	\begin{eqnarray*}
		(C_{red},F_{red})=
		 \begin{bmatrix}
			1 & r-s  &   sp \\
			0 & \tfrac{F_{red}(p)-F_{red}(0)}{p}	& F_{red}(0)  \\
			0 &                  0                  & 1
		\end{bmatrix}
		\cdot (C_{B},F_{B}).
	\end{eqnarray*}
	It follows from this and Proposition \ref{nknknkn} that $\mathcal{E}_{red}\sim \mathcal{E}_{C_{B},F_{B}}\sim \mathcal{B}(p)$. 

	Conversely, if $\mathcal{E}_{red}\sim \mathcal{B}(p)$, then $\mathcal{E}_{red}\sim \mathcal{E}_{C_{B},F_{B}}$ and hence 
	there is $g=\Big[\begin{smallmatrix}
		1&b&d\\
		0&a&c\\
		0&0&1
	\end{smallmatrix} \Big]$ such that $(C_{red},F_{red})=g\cdot (C_{B},F_{B})$, which implies that 
	$C_{red}(k)$ and $F_{red}(k)$ are solutions of \eqref{nkandkfekwnwk} for all $k=0,...,p$, provided 
	$r=b+\tfrac{d}{p}$ and $s=\tfrac{d}{p}$. By Theorem \ref{nekwndkeknskn} again, this implies that 
	the scalar curvature of $T\mathcal{E}$ is constant.
\end{proof}

\begin{remark}
	As we saw in this paper, if the scalar curvature $\textup{Scal}:T\mathcal{E}\to \mathbb{R}$ 
	of the tangent bundle of a 1-dimensional exponential family defined over a finite set is constant, 
	then $\textup{Scal}>0$. This is not true for more general exponential families. For example, 
	if $\mathcal{E}=\mathcal{N}$ is the family of Gaussian distributions over $\mathbb{R}$ (see Example \ref{exa:5.4}), 
	then $\textup{Scal}:T\mathcal{N}\to \mathbb{R}$ is constant and equal to $-6$ 
	(see \cite{Molitor2014}).
\end{remark}

\section*{Acknowledgments}

	I am thankful to Caroline Santos Leite Ribeiro who carefully read and helped typing a 
	preliminary version of this article.


\end{document}